\newcommand{\mathsym}[1]{{}}
\newtheorem{thm}{Theorem}[section]
\newtheorem{cor}[thm]{Corollary}
\newtheorem{lem}[thm]{Lemma}
\newtheorem{prop}[thm]{Proposition}
\newtheorem{question}[thm]{Question}
\theoremstyle{definition}
\newtheorem{defn}{Definition}[section]
\numberwithin{equation}{section}
\theoremstyle{remark}
\theoremstyle{example}
\newcommand{\md }{\mathrm{d}}
\newcommand{\be}{\begin{equation}}
\newcommand{\ee}{\end{equation}}
\newcommand{\bag}{\begin{eqnarray}}
\newcommand{\eag}{\end{eqnarray}}
\newcommand{\ban}{\begin{eqnarray*}}
\newcommand{\ean}{\end{eqnarray*}}
\newcommand{\ba}{\begin{aligned}}
\newcommand{\ea}{\end{aligned}}
\newcommand{\bpf}{\begin{proof} }
\newcommand{\epf}{\end{proof} }
\newcommand{\ric}{\mathrm{Ric}}
\newcommand{\tr}{\mathrm{tr}}
\newcommand{\mn}{\sqrt{-1}}
\begin{document}

\title{ Transverse Fully Nonlinear Equations on Sasakian Manifolds and Applications}
\author{Ke Feng$^*$}
\address{School of Mathematical Sciences, Peking University, Beijing 100871, China}
\email{kefeng@math.pku.edu.cn}
\author{Tao Zheng$^{**}$}
\address{School of Mathematics and Statistics, Beijing Institute of Technology, Beijing 100081, China}
\email{zhengtao08@amss.ac.cn}
\subjclass[2010]{53C25, 35J60, 32W20, 58J05}
\thanks {$^{*}$Supported by National Natural Science Foundation of China grant No. 11901009.}
\thanks {$^{**}$Supported by Beijing Institute of Technology Research Fund Program for Young Scholars.}
\keywords{Sasakian manifold,  foliated vector bundles, transverse fully nonlinear equation, transverse positivity, transverse balanced metric, transverse (strongly) Gauduchon metric}
\maketitle
\begin{abstract}
We prove a priori estimates for a class of transverse fully nonlinear equations on Sasakian manifolds and give some geometric applications such as the transversion Calabi-Yau theorem for transverse balanced and (strongly) Gauduchon metrics. We also explain that similar results hold on compact oriented, taut, transverse Hermitian foliated manifold of complex co-dimension $n$, and give some geometric applications such as the transverse Calabi-Yau theorems for transverse Hermitian and (strongly) Gauduchon metrics.
\end{abstract}

\section{Introduction}\label{secintroduction}
Sasakian manifold was first introduced by Sasaki  \cite{sasaki1960} in 1960s. It is the odd dimension counterpart of K\"ahler manifold, just as the contact manifolds is  substitutes for symplectic manifolds.  Sasakian manifold has received a lot of attention because it is the natural intersection of CR, contact and Riemannian geometry, and plays a very important role in Riemannian \& algebraic geometry and in Physics.  Sasakian manifolds first appeared in String theory in \cite{maldacena1998}.  Sasaki-Einstein metric is useful in Ads/CFT correspondence (see the detailed survey paper \cite{sparkssasaki} the references therein).
Boyer-Galicki  \cite{sasakigeo} includes a series of papers and references about various differential geometric aspects of Sasakian manifolds.  We can find transverse counterparts on Sasakian manifolds of the famous results in K\"ahler manifolds, such as the transverse Calabi-Yau theorem \cite{elk90}  (see also \cite{bg01,swz10}), the existence of canonical metrics on Sasakian manifolds \cite{fow09}, Sasaki-Einstein metrics and K-(semi-)stability on Sasakian manifolds \cite{cs15,cs18},   the Frankel conjecture on Sasakian manifolds \cite{hs15,hs16}, the Uhlenbeck-Yau theorem \cite{uhlenbeckyau} about the existence of Hermitian-Einstein structure \cite{bs2010}, foliated Hitchin-Kobayashi correspondence \cite{baragliahekmati2018}, and  the geometric pluripotential theory \cite{hl18} on Sasakian manifolds. There are also many other results about Sasakian manifolds in \cite{bgk05,gmsw04,msy08,chl2018,futakizhang2018} and references therein.

Motivated by El Kacimi-Alaoui \cite{elk90},  we consider a class of transverse fully nonlinear equations on a compact Sasaki manifold
 $(M,\phi,\xi,\eta,g)$ with $\dim_{\mathbb{R}}M=2n+1$ $($$n\geq 2$$)$ and $\omega_\dag=\frac{1}{2}\mathrm{d}\eta=g(\phi\cdot,\cdot)$ as its transverse K\"ahler form. Note that $\omega_\dag$ determines uniquely a transverse K\"aher metric $g_\dag:=\omega_\dag(\cdot,\phi\cdot)$ and hence we will not distinguish the two terms in the following. All the terms  in this section can be found in Section \ref{secsasaki}.

Let us fix a basic real $(1,1)$ form $\beta_\dag$ (see \eqref{basicformdefn} for the definition). Then for any basic function  $u\in C_{\mathrm{B}}^2(M, \mathbb{R})$, we define a new basic real $(1,1)$ form
\begin{equation}
\label{defnhdag}
h_\dag=\beta_\dag+\partial_{\mathrm{B}}\overline{\partial}_{\mathrm{B}}u,
\end{equation}
and get a transverse Hermitian endomorphism  of $(\nu(\mathcal{F}_\xi),\mathbf{I})$
with respect to $\sigma^*g$ denoted, here and hereafter, by $A_{\dag,u},$ where 
$\nu(\mathcal{F}_\xi)$ is the normal bundle of the foliation $\mathcal{F}_\xi,$ and $\sigma$, $C_{\mathrm{B}}^k(M, \mathbb{R})$ with $k\in \mathbb{N}^*\cup\{\infty\}$,  $\partial_{\mathrm{B}}$(and $\bar\partial_{\mathrm{B}}$) \& $\mathbf{I}$ are given by  \eqref{sigma}, \eqref{ckbfunction}, \eqref{defndb} and \eqref{ti} respectively. That is, there holds
\begin{equation*}
g_\dag\left(\phi\left(A_{\dag,u}(\sigma(W))\right), \sigma(V)\right)=h_\dag(\sigma(W),\sigma(V)),\quad \forall \;W,\,V\in\Gamma(\nu(\mathcal{F}_\xi)),
\end{equation*}
where $\Gamma(\bullet)$ is the set of smooth sections of the vector bundle $\bullet.$

 We consider the equations for the basic function $u\in C_{\mathrm{B}}^2(M,\mathbb{R})$
defined by
\begin{equation}
\label{equ1}
F(A_{\dag,u})=\psi
\end{equation}
for a given basic function $\psi\in C_{\mathrm{B}}^\infty(M,\mathbb{R})$, where $F(A_{\dag,u})$ is a smooth symmetric function of the eigenvalues $(\lambda_1,\cdots,\lambda_n)$ of the map $A_{\dag,u}$. We denote it by
\begin{equation}
\label{equ2}
F(A_{\dag,u})=f(\lambda_1,\cdots,\lambda_n).
\end{equation}
We suppose that $f$ is defined in an open symmetric cone $\Gamma\subsetneqq\mathbb{R}^n$, with vertex at the origin. We also assume that $\Gamma\supset\Gamma_n:=\left\{(x_1,\cdots,x_n)\in\mathbb{R}^n:\;x_i>0,\;1\leq i\leq n\right\}.$  For example, we can take (see \cite{spruck}) $\Gamma$ as the standard $k$-positive cone $\Gamma_k\subset\mathbb{R}^n$ which are defined by
\begin{equation*}
\Gamma_k:=\{\mathbf{x}\in\mathbb{R}^n:\;\sigma_i(\mathbf{x})>0,\;i=1,\cdots,k\},\quad 1\leq k\leq n,
\end{equation*}
where $\sigma_i$ is the $i^{\mathrm{th}}$ elementary symmetric polynomial defined on $\mathbb{R}^n$  given by
\begin{equation*}
\sigma_i(\mathbf{x})=\sum_{1\leq j_1<\cdots<j_i\leq n} x_{j_1}\cdots x_{j_i},\quad
\forall\;\mathbf{x}=(x_1,\cdots,x_n)\in\mathbb{R}^n,\quad 1\leq i\leq n.
\end{equation*}
In addition, $f$ satisfies
\begin{enumerate}
\item \label{assum1}$f$ is a concave function and $f_i:=\partial f/\partial \lambda_i>0$ for any $i=1,\cdots,n;$
\item \label{assum2}there holds $\sup_{\partial\Gamma}f<\inf_M \psi,$ where
\begin{equation*}
\sup_{\partial\Gamma}f:=\sup_{\lambda'\in\partial\Gamma}\limsup_{\Gamma\ni\lambda\to\lambda'}f(\lambda);
    \end{equation*}
\item \label{assum3}for any $\sigma$ with $\sigma<\sup_\Gamma f$ and $\lambda\in\Gamma$, we have
\begin{equation*}
    \lim_{t\to+\infty}f(t\lambda)>\sigma.
\end{equation*}
\end{enumerate}
 Assumption \eqref{assum3}, together with the concavity of $f$, yields that (see for example \cite{cnsacta})
\begin{equation}
 \label{filambdai}
\sum_{i=1}^nf_i\lambda_i\geq 0.
\end{equation}
\begin{defn}
  \label{tcsubsol}
Let $(M,\phi,\xi,\eta,g)$ be a Sasakian manifold with $\dim_{\mathbb{R}}M=2n+1$ $($$n\geq 2$$)$ and $\omega_\dag=\frac{1}{2}\mathrm{d}\eta=g(\phi\cdot,\cdot)$ as its transverse K\"ahler form. Then a  basic function $\underline{u}\in C_{\mathrm{B}}^\infty(M,\mathbb{R})$ is called a transverse $\mathcal{C}$-subsolution of \eqref{equ1} if at each point $\mathbf{p}$, the set
\begin{equation}
\left(\lambda\left(A_{\dag,\underline{u}}\right)+\Gamma_n\right)\cap \partial \Gamma^{\psi(\mathbf{p})}
\end{equation}
is bounded. Here and hereafter, $\lambda(A)$ denotes the $n$-tuple of eigenvalues of $A,$ and $\Gamma^\sigma$ is a convex set given by $$\Gamma^\sigma:=\{\lambda\in\Gamma:\;f(\lambda)>\sigma\}.$$
A  basic function $\underline{u}\in C_{\mathrm{B}}^\infty(M,\mathbb{R})$ is called transverse admissible if $\lambda\left(A_{\dag,\underline{u}}\right)\in\Gamma.$
\end{defn}
Note that  a transverse $\mathcal{C}$-subsolution need not to be transverse admissible.
\begin{thm}
\label{mainthm}
Suppose the basic function $u\in C_{\mathrm{B}}^\infty(M,\mathbb{R})$ is a solution to \eqref{equ1} with $\sup_Mu=0,$ and that the basic function $\underline{u}\in C_{\mathrm{B}}^\infty(M,\mathbb{R})$ is a transverse $\mathcal{C}$-subsolution to \eqref{equ1}. There exists a uniform constant $C$ depending only on $\eta,\,g,\,\omega_\dag$ and $\underline{u}$ such that
\begin{equation}
\label{mainequ}
\| u\|_{C^{2,\alpha}(M,g)}\leq C,\quad 0<\alpha<1.
\end{equation}
\end{thm}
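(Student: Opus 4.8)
The plan is to adapt the method of Székelyhidi for fully nonlinear equations on compact Hermitian manifolds to the transverse/basic setting, carrying out every computation in a local foliation chart in which basic functions descend to genuine functions on an open subset of an $n$-dimensional Kähler quotient, and $A_{\dag,u}$ descends to the complex Hessian endomorphism associated with the descended metrics. I would split the proof into four parts: (i) a $C^0$ estimate; (ii) a second-order estimate bounding $\sup_M\lambda_{\max}(A_{\dag,u})$ in terms of $1+\sup_M|\nabla_{\mathrm{B}}u|^2$, which in particular forces $\lambda(A_{\dag,u})$ into a fixed compact subset of $\Gamma$ once the gradient is controlled, hence uniform ellipticity; (iii) a gradient estimate; (iv) the $C^{2,\alpha}$ estimate \eqref{mainequ} via Evans--Krylov. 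Throughout, one must check that the test functions, cutoffs, and integrations stay within basic functions, and that all constants depend only on $\eta,g,\omega_\dag$ and $\underline u$.

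For (i) I would use the transverse $\mathcal{C}$-subsolution together with the Alexandroff--Bakelman--Pucci argument of Blocki/Székelyhidi. Writing $v=u-\underline u$ and working near a minimum point of $v$ in a foliation chart, the boundedness of $\left(\lambda(A_{\dag,\underline u})+\Gamma_n\right)\cap\partial\Gamma^{\psi(\mathbf p)}$ from Definition \ref{tcsubsol}, combined with assumption \eqref{assum2}, supplies exactly the structural input needed to run the ABP maximum principle and conclude $\mathrm{osc}_M u\le C$; since $\sup_M u=0$ this yields $\|u\|_{C^0(M,g)}\le C$. The fact that basic functions pull back from the local leaf space means the ABP estimate may be applied downstairs without loss.

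For (ii), the heart of the argument, I would differentiate \eqref{equ1} twice and apply the maximum principle to a quantity of the form $W=\log\lambda_{\max}(A_{\dag,u})+\varphi(|\nabla_{\mathrm{B}}u|^2)+\Psi(u-\underline u)$ (with $\lambda_{\max}$ perturbed so as to be differentiable, and $\varphi,\Psi$ chosen convex/increasing as in Székelyhidi). At an interior maximum, the concavity assumption \eqref{assum1} absorbs the third-order terms via $\sum_{i,j}f_{ij}\,(\text{third derivatives})\le 0$, the transverse curvature of $(M,g)$ contributes bounded terms because the transverse Levi-Civita connection on $\nu(\mathcal{F}_\xi)$ is the Kähler connection of $g_\dag$ downstairs and Reeb-direction derivatives of basic functions vanish, and the $\mathcal{C}$-subsolution enters precisely as in the Hermitian case: either $\lambda(A_{\dag,u})$ is far from $\partial\Gamma^\psi$, giving (with \eqref{filambdai}) a uniform lower bound on $\sum_i f_i$, or it is close, in which case some $f_i\ge\delta$; in both regimes the $\Psi$-term dominates and one gets $\lambda_{\max}(A_{\dag,u})\le C(1+\sup_M|\nabla_{\mathrm{B}}u|^2)$.

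For (iii) I would argue by contradiction and blow up: if $\sup_M|\nabla_{\mathrm{B}}u_k|\to\infty$, rescale in a foliation chart centered at a point of near-maximal gradient; the transverse structure converges to the flat one and, using the $C^0$ bound from (i) and the scaling-favorable second-order bound from (ii), one extracts a nonconstant entire solution on $\mathbb{C}^n$ of a limiting equation of the same type, contradicting a Liouville-type theorem (as in Dinew--Kolodziej / Székelyhidi). Finally, for (iv), the $C^0$ bound on $u$ and on $\partial_{\mathrm{B}}\overline{\partial}_{\mathrm{B}}u$ together with uniform ellipticity and the concavity of $F$ allow the Evans--Krylov interior estimate in each foliation chart; the resulting $C^{2,\alpha}$ bounds respect the basic structure and are chart-independent, so they patch to \eqref{mainequ}. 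I expect step (ii) to be the main obstacle: the transverse commutation formulas for basic covariant derivatives generate curvature and torsion terms that must be shown not to spoil the argument, and one must verify that Székelyhidi's $\mathcal{C}$-subsolution machinery goes through verbatim with constants controlled by $\eta,g,\omega_\dag,\underline u$; the transverse Liouville theorem needed in (iii) is the secondary delicate point.
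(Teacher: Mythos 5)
Your proposal follows essentially the same route as the paper: a $C^0$ bound via the $\mathcal{C}$-subsolution and the Alexandroff--Bakelman--Pucci argument of B{\l}ocki/Sz\'ekelyhidi, a second-order estimate of the form $\sup_M|\partial_{\mathrm{B}}\overline{\partial}_{\mathrm{B}}u|\le C(1+\sup_M|\nabla u|^2)$ obtained by applying the maximum principle to $\log\lambda_1+\rho(|\nabla u|^2)+\varphi(u)$ (the paper normalizes $\underline u=0$, so your $\Psi(u-\underline u)$ is its $\varphi(u)$), a gradient bound by blow-up in a foliated chart against Sz\'ekelyhidi's Liouville theorem, and finally Evans--Krylov in the foliated local coordinates. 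The structure, the key auxiliary quantities, and the way the transverse K\"ahler structure is used to reduce to the Hermitian computations all coincide with the paper's argument.
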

Let us consider some applications.
\begin{cor}
\label{corjia1}
Let $(M,\phi,\xi,\eta,g)$ be a compact Sasakian manifold with $\dim_{\mathbb{R}}M=2n+1$ $($$n\geq 2$$)$ and $\omega_\dag=\frac{1}{2}\mathrm{d}\eta=g(\phi\cdot,\cdot)$ as its strictly transverse K\"ahler form, and let $\omega_h$ be a  strictly  transverse $k$-positive basic real $(1,1)$ form, i.e., the $n$-tuple $\lambda(\omega_h)$ of eigenvalues  of $\omega_h$ with respect to $\omega_\dag$ satisfies $\lambda\left( \omega_h  \right)\in \Gamma_k.$  Then given a basic function $G\in C_{\mathrm{B}}^\infty(M, \mathbb{R})$, there exists a unique basic function  $u\in C_{\mathrm{B}}^\infty(M,\mathbb{R})$ and unique constant $b\in\mathbb{R}$ such that the pair $(u,b)$ solves the equation
\begin{equation}
\label{twhesstcma}
\left(\omega_h+\frac{1}{n-1}\left[\left(\Delta_{\mathrm{B}}u\right)\omega_\dag-\sqrt{-1}\partial_{\mathrm{B}}\overline{\partial}_{\mathrm{B}}u \right] \right)^k\wedge\omega_\dag^{n-k}\wedge\eta=e^{G+b}
\omega_\dag^n\wedge\eta,
\end{equation}
where  $\sup_Mu=0$ and
\begin{equation*}
\omega_h+\frac{1}{n-1}\left[\left(\Delta_{\mathrm{B}}u\right)\omega_\dag-\sqrt{-1}\partial_{\mathrm{B}}\overline{\partial}_{\mathrm{B}}u \right]
\end{equation*}
is a   strictly transverse $k$-positive basic real $(1,1)$ form.
\end{cor}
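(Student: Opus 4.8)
The plan is to recast \eqref{twhesstcma} as an instance of \eqref{equ1} and then invoke Theorem \ref{mainthm} together with a continuity-method argument. First I would define, for $u\in C_{\mathrm B}^\infty(M,\mathbb R)$, the transverse Hermitian endomorphism $A_{h,u}$ associated to the basic real $(1,1)$ form $\omega_h+\frac{1}{n-1}\big[(\Delta_{\mathrm B}u)\omega_\dag-\sqrt{-1}\partial_{\mathrm B}\overline{\partial}_{\mathrm B}u\big]$ with respect to $\omega_\dag$, and observe that its eigenvalues are $\mu_i=\mathrm{tr}_{\omega_\dag}(\omega_h)+\Delta_{\mathrm B}u-\lambda_i-\de_{\mathrm B}\dbar_{\mathrm B}u$-type expressions; more precisely, if $\lambda(A)$ denotes the eigenvalues of the endomorphism $A$ attached to $\omega_h+\sqrt{-1}\partial_{\mathrm B}\overline{\partial}_{\mathrm B}(-u)$, then $\mu(A_{h,u})$ is obtained from $\lambda$ by the linear map $\mu_i=\frac{1}{n-1}\big(\sum_j\lambda_j-\lambda_i\big)$. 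Thus \eqref{twhesstcma} becomes $F(A_{h,u})=(G+b)$ with $F=\log\big(\sigma_k\circ L\big)$ where $L$ is that linear map and $\sigma_k$ is the $k$-th elementary symmetric polynomial; the admissible cone is $\Gamma=L^{-1}(\Gamma_k)$, which contains $\Gamma_n$ since $L$ preserves positivity up to the $(n-1)$-Hessian structure, and one checks assumptions \eqref{assum1}--\eqref{assum3}: concavity of $\log\sigma_k^{1/k}$ composed with a linear map, positivity of the partials, and the asymptotic conditions at $\partial\Gamma$ and at infinity are standard for $\sigma_k$-type operators (see \cite{spruck,cnsacta}).

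Next I would set up the continuity path $F(A_{h,u_t})=t(G+b_t)+(1-t)\,c_0$ for $t\in[0,1]$, where $c_0$ is chosen so that $u_0\equiv 0$ solves the $t=0$ equation with $b_0$ the appropriate normalizing constant; openness follows from the implicit function theorem on basic Hölder spaces once one notes the linearized operator is transversally elliptic with the $b$-direction supplying the missing cokernel dimension (the solvability of $L(v)+\dot b=w$ with $\int_M v\,\omega_\dag^n\wedge\eta=0$). For closedness I need the key observation that the given strictly transverse $k$-positive form $\omega_h$ itself, suitably interpreted, furnishes a transverse $\mathcal C$-subsolution: indeed $u\equiv 0$ is transverse admissible for the $t=0$ problem, and because the cone $\Gamma=L^{-1}(\Gamma_k)$ has the right recession behaviour, the set $(\lambda(A_{h,0})+\Gamma_n)\cap\partial\Gamma^{\sigma}$ is bounded for every relevant level $\sigma$; this is exactly the $\mathcal C$-subsolution condition of Definition \ref{tcsubsol}. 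Then Theorem \ref{mainthm} gives the a priori $C^{2,\alpha}$ bound uniform along the path, standard transverse Schauder and bootstrap arguments upgrade this to uniform $C^{k,\alpha}$ for all $k$, and Arzelà--Ascoli closes the set of solvable $t$; uniqueness of $b$ follows by integrating \eqref{twhesstcma}, and uniqueness of $u$ from the maximum principle applied at the point where $u_1-u_2$ attains its extremum, using $f_i>0$ and concavity.

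The main obstacle I anticipate is verifying that the operator $\big(\omega_h+\frac{1}{n-1}[(\Delta_{\mathrm B}u)\omega_\dag-\sqrt{-1}\partial_{\mathrm B}\overline{\partial}_{\mathrm B}u]\big)^k\wedge\omega_\dag^{n-k}$ genuinely fits the structural hypotheses \eqref{assum1}--\eqref{assum3} after the change of variables by $L$ — in particular that $\Gamma:=L^{-1}(\Gamma_k)$ is an open symmetric convex cone containing $\Gamma_n$ with vertex at the origin, and that the concavity of $f=\log\sigma_k\circ L$ together with the required boundedness of level sets is preserved. This is the "$(n-1)$-plurisubharmonic" phenomenon familiar from the work on the form-type Calabi--Yau equation; on the transverse side one must check it respects the basic/foliated structure, but since $\Delta_{\mathrm B}$, $\partial_{\mathrm B}\overline{\partial}_{\mathrm B}$ and contraction with $\omega_\dag$ all preserve basic forms, the transverse case reduces pointwise to the linear-algebraic statement, which is known. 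A secondary technical point is ensuring the constant $b$ can be absorbed into assumption \eqref{assum2} (i.e.\ $\sup_{\partial\Gamma}f<\inf_M(G+b)$) uniformly along the continuity path; this is handled by an a priori estimate on $b$ coming from evaluating the equation at the maximum and minimum of $u$.
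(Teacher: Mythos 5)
Your overall strategy coincides with the paper's: rewrite \eqref{twhesstcma} as an instance of \eqref{equ1} by applying the linear map $T_i(\lambda)=\sum_{\ell\neq i}\lambda_\ell$ to the eigenvalues of a background endomorphism, take $f=\log\sigma_k\circ\mathbf{T}$ on $\Gamma=\mathbf{T}^{-1}(\Gamma_k)$, run a continuity method with $\underline{u}=0$ as the transverse $\mathcal{C}$-subsolution, deduce closedness from Theorem \ref{mainthm} plus bootstrapping, and obtain uniqueness of $(u,b)$ from the maximum principle. Two remarks. First, your ``more precise'' identification of the reduction is off: the correct background form is $\beta_\dag=(\mathrm{tr}_{\omega_\dag}\omega_h)\,\omega_\dag-(n-1)\omega_h$, and one applies $\mathbf{T}$ to the eigenvalues of the endomorphism attached to $\beta_\dag+\sqrt{-1}\partial_{\mathrm{B}}\overline{\partial}_{\mathrm{B}}u$ (with $+u$); with your choice $\omega_h+\sqrt{-1}\partial_{\mathrm{B}}\overline{\partial}_{\mathrm{B}}(-u)$ the map produces $\frac{1}{n-1}\left[(\mathrm{tr}_{\omega_\dag}\omega_h)\omega_\dag-\omega_h\right]$ plus the $u$-terms with the wrong sign, so it does not reproduce \eqref{twhesstcma}. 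This is easily repaired and does not affect the verification of assumptions \eqref{assum1}--\eqref{assum3}, which you correctly identify as the main structural point.

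Second, and more substantively, the openness step is where the paper does real work that your sketch compresses into one clause. The linearized operator $L(v)=g^{\bar{j}i}v_{i\bar{j}}$ has no zeroth-order term but is not self-adjoint with respect to any obvious volume form, so ``the $b$-direction supplies the missing cokernel'' only after one identifies $\ker L^{*}$; in particular the normalization $\int_M v\,\omega_\dag^n\wedge\eta=0$ is taken against the wrong measure. The paper resolves this by invoking the existence of transverse Gauduchon metrics (\cite[Theorem 3.10]{baragliahekmati2018}) to construct an explicit weight $e^{\sigma}$ with $e^{\sigma}g^{\bar{j}i}\hat\omega^{k}\wedge\omega_\dag^{n-k}\wedge\eta=g_G^{\bar{j}i}\omega_G^{n}\wedge\eta$, which forces $\int_M e^{\sigma}L(\zeta)\,\hat\omega^{k}\wedge\omega_\dag^{n-k}\wedge\eta=0$ for all $\zeta$ and lets the inverse function theorem run between the correspondingly weighted spaces. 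Your Fredholm-alternative version can be made to work, but only after establishing that $\ker L^{*}$ is spanned by a positive basic function (via the transverse elliptic theory of \cite{elk90} and the strong maximum principle) and normalizing against that density; as written, the step is a gap rather than an alternative proof.
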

In the K\"ahler setup, this equation was first introduced by \cite{houmawu}  with $k=n$, and solved by Tosatti and Weinkove \cite{twjams} for $k=n$ (see  \cite{houmawu}  on the K\"ahler manifolds with non-negative bisectional curvature and \cite{twcrelle} for Hermitian case) and by  Sz\'ekelyhidi \cite{gaborjdg} for $1\leq k<n$ which answers a question in \cite{twwycvpde}.

We say that a transverse positive basic real $(1,1)$ form $\varpi_\dag $ is transverse balanced if $\mathrm{d}_{\mathrm{B}}\varpi_\dag^{n-1}=0$,  transverse Gauduchon if $\partial_{\mathrm{B}}\overline{\partial}_{\mathrm{B}}\varpi_\dag^{n-1}=0$ and transverse strongly Gauduchon if $\overline{\partial}_{\mathrm{B}}\varpi_\dag^{n-1}$ is $\partial_{\mathrm{B}}$-exact.
We give a geometric description of Corollary \ref{corjia1} when $k=n$ which is a transverse counterpart of \cite{twjams}.
\begin{cor}
\label{corjia2}
Let $(M,\phi,\xi,\eta,g)$ be a compact Sasakian manifold with $\dim_{\mathbb{R}}M=2n+1$ $($$n\geq 2$$)$ and $\omega_\dag=\frac{1}{2}\mathrm{d}\eta=g(\phi\cdot,\cdot)$ as its transverse K\"ahler form, and let $\omega_0$ be a    transverse  balance $($resp. transverse Gauduchon, resp. transverse strongly Gauduchon$)$  metric and $F'\in C_{\mathrm{B}}^\infty(M,\mathbb{R})$ . Then  there exists a unique constant $b'\in\mathbb{R}$ and a unique transverse  balance $($resp. transverse Gauduchon, resp. transverse strongly Gauduchon$)$ metric
\begin{equation}
\label{twu}
\tilde\omega_{\dag,u}^{n-1}:=\omega_0^{n-1}+\sqrt{-1}\partial_{\mathrm{B}}\overline{\partial}_{\mathrm{B}}u \wedge\omega_{\dag}^{n-2}
\end{equation}
for some smooth basic function $u\in C_{\mathrm{B}}^\infty(M,\mathbb{R})$, solving the transverse Calabi-Yau equation
\begin{equation}
\frac{\tilde\omega_{\dag,u}^n\wedge\eta}{\omega_\dag^n\wedge\eta}=e^{F'+b'}.
\end{equation}
 This yields that given a representative $\Psi_\dag\in c_1^{\mathrm{BC,b}}(\nu(\mathcal{F}_\xi))$ $($basic first Chern class$)$, there exists a unique  transverse  balance $($resp. transverse Gauduchon, resp. transverse strongly Gauduchon$)$ metric defined as in \eqref{twu} such that
 \begin{equation}
 \ric(\tilde\omega_{\dag,u})=\Psi_\dag.
 \end{equation}
\end{cor}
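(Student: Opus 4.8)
The plan is to reduce Corollary~\ref{corjia2} to Corollary~\ref{corjia1} in the case $k=n$, via a pointwise linear-algebra dictionary between positive basic $(1,1)$ forms and positive basic $(n-1,n-1)$ forms on $(\nu(\mathcal{F}_\xi),\mathbf{I})$. First I would recall that a positive basic $(n-1,n-1)$ form $\Theta$ admits a unique positive basic $(1,1)$ form $\Theta^{1/(n-1)}$ with $\big(\Theta^{1/(n-1)}\big)^{n-1}=\Theta$: pointwise this is the inverse of the diffeomorphism $\alpha\mapsto\alpha^{n-1}$ of the cone of positive $(1,1)$ forms, hence smooth, and it is basic because the construction commutes with the local submersions defining $\mathcal{F}_\xi$. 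Given $\omega_0$ as in the statement, put $\omega_h:=(\det_{\omega_\dag}\omega_0)\,\omega_0^{-1}$, i.e.\ the basic real $(1,1)$ form whose $\sigma^*g$-Hermitian endomorphism is $\det_{\omega_\dag}\omega_0$ times the inverse of that of $\omega_0$; it is smooth, basic, independent of $u$, and strictly transverse $n$-positive $(\lambda(\omega_h)\in\Gamma_n)$ since $\omega_0$ is a transverse metric. The key identity, obtained in a $\omega_\dag$-unitary basic coframe diagonalizing $\omega_0$ (using $\star_{\omega_\dag}\omega_0^{n-1}=(n-1)!\,\omega_h$ together with $\star_{\omega_\dag}\big(\sqrt{-1}\partial_{\mathrm{B}}\overline{\partial}_{\mathrm{B}}u\wedge\omega_\dag^{n-2}\big)=(n-2)!\,\big[(\Delta_{\mathrm{B}}u)\omega_\dag-\sqrt{-1}\partial_{\mathrm{B}}\overline{\partial}_{\mathrm{B}}u\big]$), is
\begin{equation*}
\star_{\omega_\dag}\left(\omega_0^{n-1}+\sqrt{-1}\partial_{\mathrm{B}}\overline{\partial}_{\mathrm{B}}u\wedge\omega_\dag^{n-2}\right)=(n-1)!\,\Xi_u,\qquad \Xi_u:=\omega_h+\frac{1}{n-1}\big[(\Delta_{\mathrm{B}}u)\omega_\dag-\sqrt{-1}\partial_{\mathrm{B}}\overline{\partial}_{\mathrm{B}}u\big].
\end{equation*}

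From this, $\omega_0^{n-1}+\sqrt{-1}\partial_{\mathrm{B}}\overline{\partial}_{\mathrm{B}}u\wedge\omega_\dag^{n-2}$ is a positive basic $(n-1,n-1)$ form if and only if $\Xi_u$ is strictly transverse $n$-positive, and in that case its $(n-1)$-th root $\tilde\omega_{\dag,u}$ obeys $(\det_{\omega_\dag}\tilde\omega_{\dag,u})\,\tilde\omega_{\dag,u}^{-1}=\Xi_u$, so $\det_{\omega_\dag}\tilde\omega_{\dag,u}=(\det_{\omega_\dag}\Xi_u)^{1/(n-1)}$, i.e.
\begin{equation*}
\frac{\tilde\omega_{\dag,u}^n\wedge\eta}{\omega_\dag^n\wedge\eta}=\left(\frac{\Xi_u^n\wedge\eta}{\omega_\dag^n\wedge\eta}\right)^{1/(n-1)}.
\end{equation*}
Therefore the transverse Calabi--Yau equation $\tilde\omega_{\dag,u}^n\wedge\eta=e^{F'+b'}\omega_\dag^n\wedge\eta$ is \emph{equivalent} to equation \eqref{twhesstcma} with $k=n$, $G:=(n-1)F'$ and $b:=(n-1)b'$. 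Applying Corollary~\ref{corjia1} yields a unique basic $u$ with $\sup_Mu=0$ and a unique $b\in\R$ solving \eqref{twhesstcma} $(k=n)$ with $\Xi_u$ strictly transverse $n$-positive; setting $b':=b/(n-1)$ and $\tilde\omega_{\dag,u}:=\big(\omega_0^{n-1}+\sqrt{-1}\partial_{\mathrm{B}}\overline{\partial}_{\mathrm{B}}u\wedge\omega_\dag^{n-2}\big)^{1/(n-1)}$ then gives a transverse Hermitian metric of the form \eqref{twu} solving the transverse Calabi--Yau equation. Uniqueness of $(b',\tilde\omega_{\dag,u})$ follows from that in Corollary~\ref{corjia1}: the metric determines $\tilde\omega_{\dag,u}^{n-1}$, hence $\sqrt{-1}\partial_{\mathrm{B}}\overline{\partial}_{\mathrm{B}}u\wedge\omega_\dag^{n-2}$, hence $\sqrt{-1}\partial_{\mathrm{B}}\overline{\partial}_{\mathrm{B}}u$ (wedging with $\omega_\dag^{n-2}$ is injective on basic $(1,1)$ forms), hence $u$ up to a basic constant pinned down by $\sup_Mu=0$.

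Next I would verify that $\tilde\omega_{\dag,u}$ inherits the balanced / Gauduchon / strongly Gauduchon property of $\omega_0$. As $\omega_\dag$ is transverse K\"ahler, $d_{\mathrm{B}}\omega_\dag=0$, so $\sqrt{-1}\partial_{\mathrm{B}}\overline{\partial}_{\mathrm{B}}u\wedge\omega_\dag^{n-2}$ is $d_{\mathrm{B}}$-closed, and in particular $\overline{\partial}_{\mathrm{B}}$-closed. Hence from \eqref{twu} one gets $d_{\mathrm{B}}\tilde\omega_{\dag,u}^{n-1}=d_{\mathrm{B}}\omega_0^{n-1}$, $\partial_{\mathrm{B}}\overline{\partial}_{\mathrm{B}}\tilde\omega_{\dag,u}^{n-1}=\partial_{\mathrm{B}}\overline{\partial}_{\mathrm{B}}\omega_0^{n-1}$, and $\overline{\partial}_{\mathrm{B}}\tilde\omega_{\dag,u}^{n-1}=\overline{\partial}_{\mathrm{B}}\omega_0^{n-1}$, so each of the three conditions transfers verbatim from $\omega_0^{n-1}$ to $\tilde\omega_{\dag,u}^{n-1}$. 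For the cohomological statement: given $\Psi_\dag\in c_1^{\mathrm{BC,b}}(\nu(\mathcal{F}_\xi))$, write $\Psi_\dag=\ric(\omega_0)+\sqrt{-1}\partial_{\mathrm{B}}\overline{\partial}_{\mathrm{B}}\rho$ with $\rho\in C_{\mathrm{B}}^\infty(M,\R)$ (both represent the same basic Bott--Chern class), apply the previous part with the basic function $F':=\log(\omega_0^n/\omega_\dag^n)-\rho$, and note that
\begin{equation*}
\ric(\tilde\omega_{\dag,u})=\ric(\omega_0)-\sqrt{-1}\partial_{\mathrm{B}}\overline{\partial}_{\mathrm{B}}\log\frac{\tilde\omega_{\dag,u}^n}{\omega_0^n}=\ric(\omega_0)+\sqrt{-1}\partial_{\mathrm{B}}\overline{\partial}_{\mathrm{B}}\rho=\Psi_\dag,
\end{equation*}
using $\log(\tilde\omega_{\dag,u}^n/\omega_0^n)=F'+b'+\log(\omega_\dag^n/\omega_0^n)=b'-\rho$; uniqueness again reduces to Corollary~\ref{corjia1}, since any two such metrics solve the transverse Calabi--Yau equation with the same $F'$, differing only by a constant in the exponent.

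The analytic heart of the matter is already in Theorem~\ref{mainthm} and Corollary~\ref{corjia1}. For Corollary~\ref{corjia2} the only genuinely delicate points, which I expect to be the main obstacle, are: (i) checking that the $(n-1)$-th root of a positive basic $(n-1,n-1)$ form is again a \emph{smooth} and \emph{basic} $(1,1)$ form and that its positivity is equivalent to the strict transverse $n$-positivity of $\Xi_u$ — this needs the local leaf-space description of basic tensors together with the inverse function theorem; and (ii) the bookkeeping of the normalizing constants and of the basic Bott--Chern representative. The Gauduchon and strongly Gauduchon cases require no analytic input beyond $d_{\mathrm{B}}\omega_\dag=0$.
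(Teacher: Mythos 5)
Your argument is correct and follows essentially the same route as the paper: the key identity $\frac{1}{(n-1)!}\ast_\dag\bigl(\omega_0^{n-1}+\sqrt{-1}\partial_{\mathrm{B}}\overline{\partial}_{\mathrm{B}}u\wedge\omega_\dag^{n-2}\bigr)=\omega_h+\frac{1}{n-1}\bigl[(\Delta_{\mathrm{B}}u)\omega_\dag-\sqrt{-1}\partial_{\mathrm{B}}\overline{\partial}_{\mathrm{B}}u\bigr]$ is exactly the paper's \eqref{tformula1} (obtained from Lemma \ref{lemjia} and \eqref{formula1}), the determinant bookkeeping via \eqref{detchin-1} and \eqref{astdet} reduces the transverse Calabi--Yau equation to \eqref{twhesstcma} with $k=n$, and the Ricci statement is handled by the basic $\partial_{\mathrm{B}}\overline{\partial}_{\mathrm{B}}$-lemma. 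You are merely more explicit than the paper about the preservation of the balanced/Gauduchon/strongly Gauduchon conditions and about uniqueness, both of which the paper leaves implicit.
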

Next, let us consider the transverse Hessian and Hessian quotient   equations
\begin{cor}
\label{cor1}
Let $(M,\phi,\xi,\eta,g)$ be a compact Sasakian manifold with $\dim_{\mathbb{R}}M=2n+1$ $($$n\geq 2$$)$ and $\omega_\dag=\frac{1}{2}\mathrm{d}\eta=g(\phi\cdot,\cdot)$ as its transverse K\"ahler form, and let $\omega_h$ be a strictly transverse $k$-positive basic $(1,1)$ form. Then given a basic function $G\in C_{\mathrm{B}}^\infty(M, \mathbb{R})$, there exists a unique basic function  $u\in C_{\mathrm{B}}^\infty(M,\mathbb{R})$ and unique constant $b\in\mathbb{R}$ such that the pair $(u,b)$ solves the transverse Hessian equation
\begin{equation}
\label{thesscma}
\left(\omega_h+ \sqrt{-1}\partial_{\mathrm{B}}\overline{\partial}_{\mathrm{B}}u \right)^k\wedge\omega_\dag^{n-k}\wedge\eta=e^{G+b}
\omega_\dag^n\wedge\eta,
\end{equation}
where  $\sup_Mu=0$ and
\begin{equation*}
\omega_h+ \sqrt{-1}\partial_{\mathrm{B}}\overline{\partial}_{\mathrm{B}}u
\end{equation*}
is a  strictly transverse $k$-positive basic $(1,1)$ form.
\end{cor}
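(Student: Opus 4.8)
The plan is to realize Corollary~\ref{cor1} as a special case of Theorem~\ref{mainthm} and then solve the equation by the method of continuity. First I would cast \eqref{thesscma} into the form \eqref{equ1}: writing $\lambda=\lambda\!\left(A_{\dag,u}\right)$ for the eigenvalues of $A_{\dag,u}$ attached to $h_\dag=\omega_h+\sqrt{-1}\partial_{\mathrm{B}}\overline{\partial}_{\mathrm{B}}u$, one has $\left(\omega_h+\sqrt{-1}\partial_{\mathrm{B}}\overline{\partial}_{\mathrm{B}}u\right)^k\wedge\omega_\dag^{n-k}\wedge\eta=c_{n,k}\,\sigma_k(\lambda)\,\omega_\dag^n\wedge\eta$ with $c_{n,k}=\binom{n}{k}^{-1}$, so \eqref{thesscma} reads $F\!\left(A_{\dag,u}\right)=\psi+b$ with $F=\log\sigma_k$, $\Gamma=\Gamma_k$ and $\psi=G-\log c_{n,k}$. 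I would then check that $f=\log\sigma_k$ satisfies the structural hypotheses on $\Gamma_k$: it is concave (being $k\log$ of the concave positive function $\sigma_k^{1/k}$) with $f_i=\sigma_{k-1}(\lambda|i)/\sigma_k>0$, so \eqref{assum1} holds; the Newton--Maclaurin inequalities, valid on $\overline{\Gamma_k}$, give $\sigma_k\equiv0$ on $\partial\Gamma_k$, whence $\sup_{\partial\Gamma}f=-\infty<\inf_M\psi$, which is \eqref{assum2}; and $f(t\lambda)=k\log t+\log\sigma_k(\lambda)\to+\infty$ for $\lambda\in\Gamma_k$, which is \eqref{assum3} (consistently $\sum_if_i\lambda_i=k$, in accordance with \eqref{filambdai}).

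Next I would verify that $\underline{u}\equiv0$, i.e.\ $h_\dag=\omega_h$, is a transverse $\mathcal{C}$-subsolution of \eqref{thesscma}. Strict transverse $k$-positivity of $\omega_h$ gives $\mu:=\lambda\!\left(A_{\dag,0}\right)\in\Gamma_k$; since $\Gamma_k$ is an open convex cone containing $\Gamma_n$, one has $\mu+\overline{\Gamma_n}\subset\Gamma_k$, so $\overline{\mu+\Gamma_n}$ is disjoint from $\partial\Gamma_k$ and it suffices to bound $\left(\mu+\Gamma_n\right)\cap\{\sigma_k=e^{\psi(\mathbf{p})+b}\}$ for each point $\mathbf{p}$. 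Lowering any one coordinate of a point of $\mu+\overline{\Gamma_n}$ down to the matching coordinate of $\mu$ keeps it in $\mu+\overline{\Gamma_n}\subset\Gamma_k$, along which $\partial\sigma_k/\partial\lambda_i=\sigma_{k-1}(\lambda|i)>0$; hence $\sigma_k(\lambda)\geq\lambda_i\,\sigma_{k-1}(\mu|i)+\sigma_k(\mu|i)$ for every $\lambda\in\mu+\overline{\Gamma_n}$ and every $i$, with $\sigma_{k-1}(\mu|i)=f_i(\mu)>0$, so $\sigma_k\to+\infty$ on $\mu+\overline{\Gamma_n}$ and each level set meets it in a bounded set. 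As this is insensitive to the (always finite) value of the exponent, $\underline{u}\equiv0$ remains a transverse $\mathcal{C}$-subsolution uniformly along the continuity path below.

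With these preliminaries I would run the method of continuity over $t\in[0,1]$ on $E_t:\ F\!\left(A_{\dag,u_t}\right)=(1-t)\,F\!\left(A_{\dag,0}\right)+t\,\psi+b_t$, seeking $u_t\in C_{\mathrm{B}}^\infty(M,\mathbb{R})$ with $\sup_Mu_t=0$ and $b_t\in\mathbb{R}$; then $(u_0,b_0)=(0,0)$ solves $E_0$, and $E_1$ is exactly \eqref{thesscma} with $b=b_1$. Openness of the solvable set is the implicit function theorem in the Banach spaces of basic $C_{\mathrm{B}}^{2,\alpha}$-functions: the linearization of $E_t$ is a transverse second-order elliptic operator with no zeroth-order term, which together with the free scalar $b_t$ is surjective onto $C_{\mathrm{B}}^{0,\alpha}(M,\mathbb{R})$ with kernel the constants, eliminated by the normalization $\sup_Mu=0$; here one uses that $\partial_{\mathrm{B}}\overline{\partial}_{\mathrm{B}}$ preserves basic functions and that linear Schauder theory is available for basic sections on a Sasakian manifold. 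Closedness follows from Theorem~\ref{mainthm}: any admissible solution of $E_t$ with $\sup_Mu_t=0$ satisfies $\|u_t\|_{C^{2,\alpha}(M,g)}\leq C$ with $C$ independent of $t$, once one notes that the right-hand sides are affine in $t$, that $\underline{u}\equiv0$ is a $\mathcal{C}$-subsolution throughout, and that $b_t$ is uniformly bounded (evaluate $E_t$ at the maximum and minimum of $u_t$, using $\mp\partial_{\mathrm{B}}\overline{\partial}_{\mathrm{B}}u_t\geq0$ there and the monotonicity of $\log\sigma_k$ on $\Gamma_k$); differentiating $E_t$ and iterating linear Schauder estimates then gives uniform $C_{\mathrm{B}}^\infty$ bounds. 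For uniqueness of $(u,b)$, if $(u_1,b_1)$ and $(u_2,b_2)$ both solve \eqref{thesscma} with $\sup_Mu_i=0$, comparing at the maximum of $u_1-u_2$ forces $b_1\leq b_2$ and symmetrically $b_1\geq b_2$, so $b_1=b_2$; then $F\!\left(A_{\dag,u_1}\right)=F\!\left(A_{\dag,u_2}\right)$ rewrites the difference as $L(u_1-u_2)=0$ for a transverse elliptic $L$ without zeroth-order term (an integrated linearization, well-defined since $su_1+(1-s)u_2$ stays admissible), and the strong maximum principle for basic functions gives $u_1-u_2\equiv\mathrm{const}$, hence $\equiv0$ by the normalizations.

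The step I expect to be the main obstacle is not the bookkeeping above but guaranteeing that the constant $C$ of Theorem~\ref{mainthm} is genuinely uniform along $t\in[0,1]$ and that the required transverse functional analysis — basic H\"older spaces, basic linear Schauder estimates, the transverse strong maximum principle on a Sasakian manifold — is in place; this is essentially the same toolkit underlying the proof of Theorem~\ref{mainthm}, and with it granted the deduction parallels the K\"ahler and Hermitian arguments of \cite{gaborjdg,elk90}.
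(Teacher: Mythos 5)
Your proposal is correct and follows essentially the same route as the paper: the paper proves Corollary \ref{cor1} by repeating the continuity-method argument of Corollary \ref{corjia1} with $\omega_h+\sqrt{-1}\partial_{\mathrm{B}}\overline{\partial}_{\mathrm{B}}u$ in place of the $(n-1)$-type form and $\sigma_k(\lambda)$ in place of $\sigma_k(\mathbf{T}(\lambda))$, i.e.\ recasting \eqref{thesscma} as \eqref{equ1} with $f=\log\sigma_k$ on $\Gamma_k$, observing that $\underline{u}=0$ is a transverse $\mathcal{C}$-subsolution, invoking Theorem \ref{mainthm} for closedness, and using the maximum principle for uniqueness. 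The one step you compress is openness: you assert that the linearization $L(v)=F^{ij}(g_\dag)^{\overline{q}j}v_{i\overline{q}}$ together with the free constant is surjective onto $C^{\alpha}_{\mathrm{B}}$, but since $\omega_h$ is not assumed $\mathrm{d}_{\mathrm{B}}$-closed here, $L$ is not self-adjoint with respect to any obvious volume form, so surjectivity requires identifying the one-dimensional cokernel; the paper does this explicitly (following Tosatti--Weinkove) by producing a transverse Gauduchon conformal factor $e^{\sigma}$ with $e^{\sigma}g^{\bar ji}\hat\omega^{k}\wedge\omega_\dag^{n-k}\wedge\eta=g_G^{\bar ji}\omega_G^{n}\wedge\eta$, which exhibits a positive element of the adjoint kernel, whereas your version implicitly appeals to the abstract Fredholm/Krein--Rutman fact that the adjoint kernel of a transversally elliptic operator with no zeroth-order term is spanned by a positive basic function. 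Either justification works, but that positivity (or its explicit Gauduchon realization) is the ingredient your sketch should name.
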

When $k=n$ and $\omega_h=\omega_\dag$, this is solved by \cite{elk90} (see also \cite{bg01,swz10}), which is a counterpart to Yau \cite{yau1978} and means that given a Sasakian structure $(\phi,\xi,\eta,g)$ on $M$ and a representation $\frac{1}{2\pi}\rho$ (a real basic $(1,1)$ form) of the first basic Chern class $c_1(\nu(\mathcal{F}_\xi))$, there exists a unique Sasakian structure $(\tilde\phi,\xi,\tilde\eta,\tilde g)$ defined by \eqref{tildeeta}, \eqref{tildephi} and \eqref{tildeg}  and homologous to $(\phi,\xi,\eta,g)$ such that $\ric(\tilde g_\dag)=\rho-\mathrm{d}\tilde\eta,$ where $\ric(\tilde g_\dag)$ is given by \eqref{transversericciform} (\cite[Theorem 7.5.20]{sasakigeo} for more details and \cite{twjams10} for Hermitian case).

If $1<k<n$, then Corollary \ref{cor1} is a odd dimensional version of Dinew and Ko{\l}odziej \cite{dkajm} in the K\"ahler case.
\begin{cor}
\label{cor2}
Let $(M,\phi,\xi,\eta,g)$ be a compact Sasakian manifold with $\dim_{\mathbb{R}}M=2n+1$ $($$n\geq 2$$)$ and $\omega_\dag=\frac{1}{2}\mathrm{d}\eta=g(\phi\cdot,\cdot)$ as its transverse K\"ahler form, and let $\omega_h$ be a   closed strictly transverse $k$-positive basic $(1,1)$ form. Then  there exists a basic function  $u\in C^\infty_{\mathrm{B}}(M,\mathbb{R})$ solving the general transverse Hessian quotient equation
\begin{equation}
\label{thessquotient}
\left(\omega_h+ \sqrt{-1}\partial_{\mathrm{B}}\overline{\partial}_{\mathrm{B}}u \right)^\ell\wedge\omega_\dag^{n-\ell}\wedge\eta
=c \left(\omega_h+ \sqrt{-1}\partial_{\mathrm{B}}\overline{\partial}_{\mathrm{B}}u \right)^k\wedge\omega_\dag^{n-k}\wedge\eta,\quad 1\leq \ell<k\leq n,
\end{equation}
if
\begin{equation}
\label{thessquotientcon}
kc\omega_h^{k-1}\wedge\omega_\dag^{n-k} -\ell\omega_h^{\ell-1}\wedge\omega_\dag^{n-\ell}
\end{equation}
is a strictly transverse positive basic $(n-1,n-1)$ form, where
$$
c=\frac{\int_M\omega_h^\ell\wedge\omega_\dag^{n-\ell}\wedge\eta}{\int_M\omega_h^k
\wedge\omega_\dag^{n-k}\wedge\eta}.
$$
\end{cor}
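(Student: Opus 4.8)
The plan is to reduce the transverse Hessian quotient equation \eqref{thessquotient} to an instance of the structure equation \eqref{equ1}, so that Theorem \ref{mainthm} supplies the a priori estimates, and then run the continuity method. Rewriting \eqref{thessquotient} in terms of the endomorphism $A_{\dag,u}$ with $\beta_\dag=\omega_h$, and normalizing by $\omega_\dag^n\wedge\eta$, the equation becomes
\begin{equation*}
\frac{\sigma_k(\lambda(A_{\dag,u}))}{\sigma_\ell(\lambda(A_{\dag,u}))}=\frac{1}{c},
\end{equation*}
so the relevant operator is $F=\left(\sigma_k/\sigma_\ell\right)^{1/(k-\ell)}$ (the power chosen to make $F$ homogeneous of degree one) on the cone $\Gamma=\Gamma_k$, with $\psi\equiv \left(1/c\right)^{1/(k-\ell)}$ a constant. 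One checks the structural hypotheses: on $\Gamma_k$ the quotient $\sigma_k/\sigma_\ell$ is concave after the $1/(k-\ell)$ power (this is classical, going back to Lieberman/Trudinger; see also \cite{spruck}), each $F_i>0$ there, $F$ vanishes on $\partial\Gamma_k$ so assumption \eqref{assum2} holds since $\psi>0$ is constant, and the homogeneity of degree one gives assumption \eqref{assum3} immediately (indeed $f(t\lambda)=t f(\lambda)\to+\infty$). Thus Theorem \ref{mainthm} applies once we exhibit a transverse $\mathcal{C}$-subsolution.

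The construction of the transverse $\mathcal{C}$-subsolution is where the hypothesis \eqref{thessquotientcon} enters, and this is the main point. The candidate is the constant function $\underline{u}=0$, so that $A_{\dag,\underline u}$ has eigenvalues $\lambda(\omega_h)$ (with respect to $\omega_\dag$), which lie in $\Gamma_k$ since $\omega_h$ is strictly transverse $k$-positive. Following the now-standard computation of Sz\'ekelyhidi \cite{gaborjdg} (adapted to the basic/transverse setting), one shows that the $\mathcal{C}$-subsolution condition — boundedness of $\left(\lambda(\omega_h)+\Gamma_n\right)\cap\partial\Gamma^{\psi}$ at each point — is equivalent, for the Hessian quotient operator, precisely to the positivity of the $(n-1,n-1)$ form in \eqref{thessquotientcon}. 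Concretely, diagonalizing $\omega_h=\mathrm{diag}(\mu_1,\dots,\mu_n)$ and $\omega_\dag=\mathrm{Id}$ at a point, the subsolution inequality amounts to
\begin{equation*}
\sum_{i\neq j}\frac{\partial}{\partial\mu_i}\left(\frac{\sigma_k}{\sigma_\ell}\right)(\mu)>\frac{1}{c}\cdot(\text{limiting quotient as }\mu_j\to+\infty)\quad\text{for each }j,
\end{equation*}
and unwinding the elementary symmetric function identities converts this into the statement that $kc\,\sigma_{k-1;j}(\mu)-\ell\,\sigma_{\ell-1;j}(\mu)>0$ for all $j$, where $\sigma_{m;j}$ omits the variable $\mu_j$ — which is exactly the coefficient-wise positivity of $kc\,\omega_h^{k-1}\wedge\omega_\dag^{n-k}-\ell\,\omega_h^{\ell-1}\wedge\omega_\dag^{n-\ell}$ as an $(n-1,n-1)$ form. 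So \eqref{thessquotientcon} is designed to make $\underline u=0$ a transverse $\mathcal{C}$-subsolution.

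With the a priori estimate \eqref{mainequ} in hand, I would close the argument by the continuity method along the path $F(A_{\dag,u_t})=t\psi+(1-t)F(A_{\dag,0})$, $t\in[0,1]$, in the Banach space of basic $C^{2,\alpha}$ functions with $\sup_M u=0$. Openness follows from the implicit function theorem, the linearized operator $L v=\sum_i F_i (\beta_\dag+\partial_{\mathrm B}\dbar_{\mathrm B}u)$-Hessian-type being a transverse elliptic operator of second order with no zeroth-order term, hence invertible on basic functions orthogonal to constants (the closedness of $\omega_h$ guarantees the basic $\partial_{\mathrm B}\dbar_{\mathrm B}$-Lemma type solvability, as in the Sasakian Hodge theory recalled in Section \ref{secsasaki}); here is also where the normalizing constant is absorbed. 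Closedness is precisely Theorem \ref{mainthm} together with the standard bootstrap from $C^{2,\alpha}$ to $C^\infty$ via the transverse Evans-Krylov and Schauder theory for basic functions. The admissibility $\lambda(\omega_h+\sqrt{-1}\partial_{\mathrm B}\dbar_{\mathrm B}u_t)\in\Gamma_k$ is preserved along the path because $\Gamma_k$ is open and the endpoints stay in the closure only if the estimate degenerates, which the uniform bound prevents. The expected main obstacle is the $\mathcal{C}$-subsolution verification, i.e. the bookkeeping that identifies \eqref{thessquotientcon} with Sz\'ekelyhidi's boundedness condition in the transverse setting; everything else is a faithful transcription of the K\"ahler Hessian-quotient argument of \cite{gaborjdg, dkajm} using the basic calculus on Sasakian manifolds. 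Note we do not claim uniqueness here (unlike Corollary \ref{cor1}), since the Hessian quotient operator is only degenerate-elliptic in a way that does not force uniqueness of $u$.
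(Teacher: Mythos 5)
Your reduction to the framework of Theorem \ref{mainthm} and, crucially, your identification of the hypothesis \eqref{thessquotientcon} with the boundedness condition making $\underline{u}=0$ a transverse $\mathcal{C}$-subsolution are correct and agree in substance with the paper: the paper works with the concave operator $f(\lambda)=-\binom{n}{\ell}^{-1}\sigma_\ell(\lambda)\big/\binom{n}{k}^{-1}\sigma_k(\lambda)$ rather than with $(\sigma_k/\sigma_\ell)^{1/(k-\ell)}$, but that is cosmetic, and the computation sending $f_\infty(\mu')>-c$ to the coefficientwise positivity of $kc\,\omega_h^{k-1}\wedge\omega_\dag^{n-k}-\ell\,\omega_h^{\ell-1}\wedge\omega_\dag^{n-\ell}$ is exactly the one you sketch.

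The genuine gap is in the continuity method. Along your path $F(A_{\dag,u_t})=t\psi+(1-t)F(A_{\dag,0})$ the right-hand side is a \emph{prescribed} function of $t$, while the linearized operator \eqref{defnl} has no zeroth-order term: its kernel consists of constants and its cokernel is one-dimensional, so it is not surjective onto basic $C^{\alpha}$ functions and the implicit function theorem does not allow you to follow an arbitrary prescribed path of right-hand sides. There is no ``normalizing constant to absorb'' in the path as written; for the quotient equation (unlike Corollaries \ref{corjia1} and \ref{cor1}, where the additive constant $b_t$ plays this role) the repair the paper uses is to build the unknown constant into the path, solving
\begin{equation*}
t\,\frac{\left(\omega_h+\sqrt{-1}\partial_{\mathrm{B}}\overline{\partial}_{\mathrm{B}}u_t\right)^\ell\wedge\omega_\dag^{n-\ell}\wedge\eta}{\left(\omega_h+\sqrt{-1}\partial_{\mathrm{B}}\overline{\partial}_{\mathrm{B}}u_t\right)^k\wedge\omega_\dag^{n-k}\wedge\eta}+(1-t)\,\frac{\omega_\dag^n\wedge\eta}{\left(\omega_h+\sqrt{-1}\partial_{\mathrm{B}}\overline{\partial}_{\mathrm{B}}u_t\right)^k\wedge\omega_\dag^{n-k}\wedge\eta}=c_t
\end{equation*}
with $c_t\in\mathbb{R}$ an additional unknown; openness then follows because $(u,c)\mapsto-\tilde L(u)+c$ is surjective, $-\tilde L$ being self-adjoint for the volume form $\left(\omega_h+\sqrt{-1}\partial_{\mathrm{B}}\overline{\partial}_{\mathrm{B}}u_t\right)^k\wedge\omega_\dag^{n-k}\wedge\eta$ precisely because $\mathrm{d}_{\mathrm{B}}\omega_h=0$. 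This path has a second virtue you would have to replicate: integrating it against that volume form gives $c_t\geq tc$, which is what keeps $\underline{u}=0$ a transverse $\mathcal{C}$-subsolution of the $t$-equation for every $t$ (the relevant form becomes $kc_t\omega_h^{k-1}\wedge\omega_\dag^{n-k}-t\ell\,\omega_h^{\ell-1}\wedge\omega_\dag^{n-\ell}$, positive by \eqref{thessquotientcon} and $c_t\geq tc$), and its $t=0$ endpoint is the transverse Hessian equation already solved in Corollary \ref{cor1}. If you insist on your path you must at least introduce an unknown constant $b_t$ on the right-hand side, bound it a priori, verify the subsolution condition for the resulting non-constant right-hand side uniformly in $t$, and show $b_1=0$ at the end by integration; none of this appears in the proposal.
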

This is a counterpart to Sz\'ekelyhidi \cite{gaborjdg} (see also Song and Weinkove \cite{songweinkove2008} for the case of $\ell=n-1,\,k=n$ whose solution is the critical point of the $J$-flow introduce by Donaldson \cite{donald1999asian} from the point of view of moment amps, as well as Chen \cite{chen2000imrn,chen2004cag} in his study of the Mabuchi energy, and Fang, Lai and Ma \cite{fanglaima2011} for the case of general $\ell$ and $k=n$) in the K\"ahler case. Similarly, the solution to \eqref{thessquotient} for $\ell=n-1,k=n$ is the critical point of the transverse  $J$-flow
\begin{equation*}
\frac{\partial}{\partial t}\omega_\dag(t)=-\sqrt{-1}\partial_{\mathrm{B}}\overline{\partial}_{\mathrm{B}}
\left(\frac{\omega_h\wedge\omega_\dag(t)^{n-1}\wedge\eta}{\omega_\dag^n(t)\wedge\eta}\right),\quad
\omega_\dag(0)=\omega_h.
\end{equation*}
Furthermore, we can also propose a transverse parabolic flow for basic function $u\in C_{\mathrm{B}}^\infty(M,\mathbb{R})$
\begin{equation*}
\partial_tu=F(A_{\dag,u})-\psi,\quad u(0)=0,
\end{equation*}
such that the solution to  \eqref{equ1} is its critical point. This is a transverse version of Phong and T\^{o} \cite{phongto2017} in Hermitian manifolds (cf.\cite{collinszekelyhidijdg2017,fanglaitransaction2013,fanglaipacific2012} in K\"ahler manifolds).

If the Sasakian manifold $M$ is regular, then its base space $B$ of
the Boothby-Wang foliation \cite{bw58} is a Hodge manifold (see \cite[Theorem 4]{ha63}), in which case the transverse fully nonlinear equations in this paper  can be reduced to the fully nonlinear equations on the base space $B$. However, generally the base space $B$ is very wild and has no any manifold structure, and hence it is meaningful to consider transverse fully non-linear equations on Sasakian manifolds from this viewpoint.

The paper is organized as follows. In section \ref{secsasaki}, we collect some basic concepts about (almost) contact and Sasakian manifolds.  In section \ref{sec0order}, we give the uniform bounds of  the solution $u$ to \eqref{equ1}. In section \ref{sec2order} and Section \ref{sec1st}, we give the second and first order priori estimates of the solution $u$ to \eqref{equ1} respectively, and complete the proof of Theorem \ref{mainthm}. In Section \ref{secapp}, as applications, we prove Corollary \ref{cor1} and Corollary \ref{cor2}. In Section \ref{secfoliation}, we will explain that our method works on a compact oriented, taut, transverse foliated manifold with complex codimension $n.$ In particular, we give a Calabi-Yau type theorem for the transverse Gauduchon metric on such foliated manifolds.

The method in this paper is modified from the complex case \cite{twjams,twcrelle,gaborjdg,stw1503,collinszekelyhidijdg2017}. For readers' convenience, we give all the details in Sasakian setup and sketch in the compact oriented, taut, transverse foliated manifold case.

\noindent {\bf Acknowledgements}
The first-named author thanks Professor Gang Tian and Xiaohua Zhu for
their concern and help in the research.
The second-named author thanks Professor Jean-Pierre Demailly, Valentino Tosatti and Ben Weinkove for their invaluable directions, and Professor Chao Qian and Luigi Vezzoni  for the helpful discussions about foliation and Sasakian manifolds. Part of the work was carried out when the second-named author was a post-doc in Institute Fourier  supported by the European Research Council (ERC) grant No. 670846 (ALKAGE) and hence he thanks the institute for the hospitality. The authors are also grateful to the anonymous referees and the editor for their careful reading and helpful suggestions which greatly improved the paper.
\section{Preliminaries}\label{secsasaki}
In this section, we collect some preliminaries about contact manifolds which will be used in the following (see for example \cite{sasakigeo,kn}).
\subsection{Almost Contact Manifolds}
Let $M$ be a   manifold with $\dim_{\mathbb{R}}M=2n+1$, and $\phi,\,\xi,\,\eta$ be a tensor of type $(1,1)$, a vector field and a $1$-form respectively. Then if $\phi,\,\xi$ and $\eta$ satisfy
\begin{align}
\label{sasaki1}
\eta(\xi)=&1,\\
 \label{sasaki2}
 \phi^2Z=&-Z+\eta(Z)\xi,\quad \forall \,Z\in\mathfrak{X}(M),
\end{align}
then $M$ is said to have an \emph{almost contact structure} $(\phi,\,\xi,\,\eta)$, and is called an \emph{almost contact manifold}.
For the almost contact structure $(\phi,\,\xi,\,\eta)$, it follows from \cite[Proposition V-1.1 \& V-1.2]{kn} that
\begin{align*}
\phi(\xi)= 0,\quad \text{rank}\phi=2n,\quad
\eta(\phi (Z))= 0,\quad \forall \,Z\in\mathfrak{X}(M),
\end{align*}
and that $M$ admits a Riemannian metric $g$ such that
$$g(Z,\xi)=\eta(Z),\quad g_\dag(Y,Z):=g(\phi Y,\phi Z)=g(Y,Z)-\eta(Y)\eta(Z),\quad\forall \,Y,\,Z\in\mathfrak{X}(M).$$
We also define
\begin{equation*}
\omega_\dag(Y,Z):=g_\dag(\phi Y,Z),\quad\forall \,Y,\,Z\in\mathfrak{X}(M).
\end{equation*}
Since the vector field $\xi$ is nowhere vanishing, it generates a $1$ dimensional  subbundle $L_\xi$ of the tangent bundle $TM$. Hence, an almost contact manifold $(M,\,\phi,\,\xi,\,\eta)$  has a $1$ dimensional  foliation $\mathcal{F}_\xi$ which is called the \emph{characteristic foliation} associated to $L_\xi.$ The $1$ form $\eta$ is called the \emph{characteristic} $1$ form, and it
defines a $2n$ dimensional vector bundle $\mathcal{D}$, called \emph{horizontal subbundle} of $TM,$ on $M$, where the fiber $\mathcal{D}_p$ of $\mathcal{D}$ is defined by
$$
\mathcal{D}_p:=\mathrm{Ker}\eta_p,\quad\forall\,p\in M.
$$
Hence we get a decomposition of the tangent bundle $TM$ given by
$$
TM=\mathcal{D}\oplus L_\xi,
$$
and an exact sequence of vector bundles
$$
0\longrightarrow  L_\xi\longrightarrow TM \stackrel{\pi}{\longrightarrow} \nu(\mathcal{F}_\xi)\longrightarrow0,
$$
where $\nu(\mathcal{F}_\xi):=TM/L_\xi$ which is called the \emph{normal bundle} of the foliation $\mathcal{F}_\xi.$ There is a smooth vector bundle isomorphism $\sigma$ given by
\begin{equation}
\label{sigma}
\sigma:\,\nu(\mathcal{F}_\xi)\longrightarrow\mathcal{D}
\end{equation}
such that $\pi\circ\sigma=\mathrm{Id}_{\nu(\mathcal{F}_\xi)}.$
It follows that $\phi$ induces a splitting
$$
\mathcal{D}\otimes_{\mathbb{R}}\mathbb{C}=\mathcal{D}^{1,0}\oplus \mathcal{D}^{0,1},
$$
where $\mathcal{D}^{1,0}$ and $\mathcal{D}^{0,1}$ are eigenspaces of $\phi$ with eigenvalues $\sqrt{-1}$ and $-\sqrt{-1}$, respectively. We call $(\mathcal{D}, \phi_{\upharpoonright\mathcal{D}})$    an \emph{almost CR  structure}.

A $p$ form $\varpi$ on $M$ is called \emph{basic} if
\begin{equation}
\label{basicformdefn}
\iota_Y\varpi=0\quad\text{and}\quad\mathcal{L}_Y\varpi=\iota_{Y}\mathrm{d}\varpi=0,\quad \forall\, Y\in \Gamma(L_\xi),
\end{equation}
where $\iota_Y$ is the inner product defined by
$$(\iota_Y\varpi)(X_1,\cdots,X_{p-1})=\varpi(Y,X_1,\cdots,X_{p-1}),\quad \forall\;X_1,\cdots,X_p\in\mathfrak{X}(M),$$
and $\mathcal{L}_Y$ is the Lie derivative given  by $\mathcal{L}_Y=\iota_{Y}\circ\mathrm{d}+\mathrm{d}\circ\iota_{Y}$.
Here we recall that $\mathrm{d}\varpi$ is defined by
\begin{align}
\label{waiweifen}
(\md\varpi)(X_1,\cdots,X_{p+1})
=&\sum\limits_{\lambda=1}^{p+1}(-1)^{\lambda+1}X_{\lambda}(\varpi(X_1,\cdots,\widehat{X_{\lambda}},\cdots,X_{p+1}))\\
&+\sum\limits_{\lambda<\mu}(-1)^{\lambda+\mu}\varpi([X_{\lambda},X_{\mu}],X_1,\cdots,\widehat{X_{\lambda}},\cdots,\widehat{X_{\mu}},\cdots,X_{p+1})\nonumber
\end{align}
for any fields $X_1,\cdots,X_{p+1}\in \mathfrak{X}(M)$.

Note that a basic $0$ form (i.e., basic function) means that a function  $u\in C^1(M, \mathbb{R})$ satisfying $\xi (u)\equiv0$.
We use the notation that
 \begin{equation}
 \label{ckbfunction}
 C^k_{\mathrm{B}}(M,\mathbb{R}):=\left\{u\in C^k(M,\mathbb{R}):\, \xi(u)=0 \right\},\quad k\in \mathbb{N}^*\cup\{\infty\}.
 \end{equation}
Let $\bigwedge^p_{\mathrm{B}}$  denote the sheaf of germs of basic $p$ forms,   $\Omega_{\mathrm{B}}^p:=\Gamma(M, \bigwedge^p_{\mathrm{B}})$  the set of global sections of $\bigwedge^p_{\mathrm{B}}.$
  Since the exterior differential preserves basic forms, we set $\mathrm{d}_{\mathrm{B}}:=\mathrm{d}_{\upharpoonright\Omega_{\mathrm{B}}^p}$ with $\mathrm{d}_{\mathrm{B}}^2=0$. Note that $\Omega_{\mathrm{B}}^p$ is $C^\infty_{\mathrm{B}}(M,\mathbb{R})$ module.

The manifold $M$ with $\dim_{\mathbb{R}}M=2n+1$ is said to have a \emph{contact structure} and is called a \emph{contact manifold} if it carries a $1$-form $\eta$ with
\begin{equation}
  \eta\wedge(\mathrm{d}\eta)^n\not=0
\end{equation}
everywhere on $M$, and $\eta$ is called a \emph{contact form} on $M$. We know that (see for example \cite[Lemma 6.1.24]{sasakigeo}) there exists a unique vector field $\xi$ such that
\begin{equation*}
\eta(\xi)=1,\quad \mathrm{d}\eta(\xi,Z)=0,\quad \forall\,Z\in\mathfrak{X}(M).
\end{equation*}
The vector field $\xi$ is called the \emph{characteristic vector field} or the \emph{Reeb vector field}.
For this contact manifold $M$, there exists (see for example \cite[Theorem V-2.1]{kn}) an almost contact metric structure $(\phi,\,\xi,\,\eta,\,g)$
such that
\begin{equation}
\label{gdag}
\omega_\dag(Y,Z)= g_\dag(\phi Y, Z)=g(\phi Y, Z)=\frac{1}{2}\mathrm{d}\eta(Y,Z), \quad Y,\,Z\in \mathfrak{X}(M).
\end{equation}
This structure $(\phi,\,\xi,\,\eta,\,g)$ is called \emph{contact metric structure} associated to the contact form $\eta$, and a manifold with such a structure is called \emph{contact metric manifold}.

For a contact metric manifold $(M,\,\phi,\,\xi,\,\eta,\,g)$, we take
\begin{equation}
\label{defnvol}
\mathrm{dvol}_g:=\frac{\eta\wedge\omega_\dag^n}{n!}=\frac{1}{2^nn!}\eta\wedge(\mathrm{d}\eta)^n
\end{equation}
as the Riemannian volume form.  We define the transverse Hodge star operator $\ast_{\dag}$ in term of the usual Hodge star operator $\ast$ by (see for example \cite[Formula (7.2.2)]{sasakigeo})
\begin{equation}
\label{defnbardag}
\ast_{\dag}:\,\Omega_{\mathrm{B}}^p\to \Omega_{\mathrm{B}}^{2n-p},\quad   \ast_{\dag} \varphi\mapsto \ast (\eta\wedge \varphi)=(-1)^p\iota_\xi(\ast\varphi).
\end{equation}
The  adjoint $\delta_{\mathrm{B}}:\,\Omega_{\mathrm{B}}^p\to \omega_{\mathrm{B}}^{p-1}$ of $\mathrm{d}_{\mathrm{B}}$  is defined by
\begin{equation}
  \delta_{\mathrm{B}}:=- \ast_{\dag}\circ\mathrm{d}_{\mathrm{B}}\circ \ast_{\dag}.
\end{equation}
The basic Laplacian $\Delta_{\mathrm{d}_{\mathrm{B}}}$ is defined in terms of $\mathrm{d}_{\mathrm{B}}$ and its its adjoint $\delta_{\mathrm{B}}$ by
\begin{equation}
\Delta_{\mathrm{d}_{\mathrm{B}}}:=\mathrm{d}_{\mathrm{B}}\circ\delta_{\mathrm{B}}+\delta_{\mathrm{B}}\circ\mathrm{d}_{\mathrm{B}}.
\end{equation}
For a contact metric manifold $(M,\,\phi,\,\xi,\,\eta,\,g)$, if $\xi$ is a Killing vector field with respect to $g$, then $(\phi,\,\xi,\,\eta,\,g)$ is called a \emph{K-contact structure} and $M$ is called a \emph{K-contact manifold}.
\subsection{Normality of Almost Contact Manifolds} There are two slightly different methods to introduce the notation of normality of the almost contact structure (see \cite[Section V-3]{kn} and \cite[Section 6.5]{sasakigeo}).

Let $(M,\,\phi,\,\xi,\,\eta)$ be an almost contact manifold with $\dim_{\mathbb{R}}M=2n+1$.  Then we define
\begin{align*}
4\mathcal{N}_\phi(Y,\,Z):=&\phi^2[Y,\,Z]+[\phi Y,\,\phi Z]-\phi[\phi Y,\,Z]-\phi[Y,\,\phi Z],\\
\mathcal{N}^{(1)}(Y,Z):=&4\mathcal{N}_\phi(Y,Z)+\mathrm{d}\eta(Y,Z)\xi,\quad
\mathcal{N}^{(2)}(Y,Z):= (L_{\phi Y}\eta)(Z)-(L_{\phi Z}\eta)(Y),\\
\mathcal{N}^{(3)}(Z):=&(L_{\xi}\phi)(Z),\quad
 \mathcal{N}^{(4)}(Z):= (L_{\xi}\eta)(Z),\quad \forall\,Y,\,Z\in \mathfrak{X}(M).
\end{align*}
If  $\mathcal{N}^{(1)}$ vanishes so does $\mathcal{N}^{(i)}$ for $i=2,3,4$ (see for example \cite[Lemma 6.5.10]{sasakigeo}).

If  $(M,\phi,\xi,\eta,\,g)$ is a contact metric manifold, then we have $\mathcal{N}^{(2)}=\mathcal{N}^{(4)}=0$. Furthermore, $(M,\phi,\xi,\eta,\,g)$ is K-contact if and only if $\mathcal{N}^{(3)}=0$ (see for example \cite[Proposition 6.5.12]{sasakigeo}).

For a smooth manifold $M$, we denote by $X:=\mathbb{R}_+\times M$ the \emph{cone} on $M$, where $\mathbb{R}_+$ is the set of positive of real numbers with coordinate $r$. We shall identify $M$ with $\{1\}\times M$.

Let $(M,\,\phi,\,\xi,\,\eta)$ be an almost contact manifold with $\dim_{\mathbb{R}}M=2n+1$. Then we define an almost complex structure $J$ on the tangent bundle $TX$ of the cone by
\begin{equation*}
J Z=\phi Z-\eta(Z) (r\partial_r) , \quad J (r\partial_r)=\xi,\quad Z\in\mathfrak{X}(M),
\end{equation*}
where $r\partial_r:=r(\partial/\partial r)$ is the Liouville (or Euler) vector field, and a Hermitian metric $g_X$ given by
$$
g_X=\mathrm{d}r\otimes \mathrm{d}r+r^2g.
$$
Indeed, a direct calculation yields that
\begin{align*}
J^2=&-\mathrm{Id}_{TX},\quad g_X(JY,JZ)=g_X(Y,Z),\quad\forall\, Y,\,Z\in \mathfrak{X}(X).
\end{align*}
The Nijenhuis tensor $\mathcal{N}_J$  of this almost complex structure is defined by
\begin{equation}
\label{torsion}
4\mathcal{N}_J(Y,\,Z)=J^2[Y,\,Z]+[JY,\,JZ]-J[JY,\,Z]-J[Y,\,JZ], \quad \forall\,Y,\,Z\in \mathfrak{X}(X).
\end{equation}
If the Nijenhuis tensor is integrable, i.e., $\mathcal{N}_J\equiv0$, then the almost contact structure $(\phi,\,\xi,\,\eta)$ is called \emph{normal}.

We know that (see for example \cite[Theorem 6.5.9]{sasakigeo}) the almost contact structure $(\phi,\,\xi,\,\eta)$ of $M$ is normal if and only if $4\mathcal{N}_\phi=-\mathrm{d}\eta\otimes \xi;$ and, if and only if  (see for example \cite{gkn00})
\begin{enumerate}
\item[(a)]$[\Gamma(\mathcal{D}^{0,1}),\Gamma(\mathcal{D}^{0,1})]\subset \Gamma(\mathcal{D}^{0,1})$, i.e., the almost CR structure $(\mathcal{D},\phi_{\upharpoonright\mathcal{D}})$ is integrable;
    \item[(b)]$[\xi,\Gamma(\mathcal{D}^{0,1})]\subset \Gamma(\mathcal{D}^{0,1}) $,  i.e., $\mathcal{N}^{(3)}\equiv0.$
\end{enumerate}
A normal contact metric structure $(\phi,\,\xi,\,\eta,\,g)$ on $M$ is called a \emph{Sasakian structure}, and $M$ with this structure is called a \emph{Sasakian manifold}.

A contact metric manifold $(M,\,\phi,\,\xi,\,\eta,\,g)$ is Sasakian if its metric cone $(X,g_X:=\mathrm{d}r\otimes \mathrm{d}r+r^2g,\,\frac{1}{2}\mathrm{d}(r^2\eta),J)$ is K\"ahler (see for example \cite[Definitioin 6.5.15]{sasakigeo}). We know that
$$
\frac{1}{2}(\mathrm{d}(r^2\eta))(Y,Z)=g_X(JY,Z)=:\omega_X(Y,Z),\quad \forall\,Y,Z\in\mathfrak{X}(X).
$$
For any $p$ form $\varpi$ on the almost contact manifold $(M,\phi,\xi,\eta,\,g)$, we can define
\begin{equation}
\label{jkuozhangb}
(\phi\varpi)(X_1,\cdots,X_p):=(-1)^p\varpi(\phi X_1,\cdots,\phi X_p),\quad X_1,\cdots,X_p\in\mathfrak{X}(M).
\end{equation}
If $\mathcal{N}^{(3)}=0$, then we have
\begin{equation}
\label{n3zero}
  \phi[\xi, Z]=[\xi,\phi Z],\quad \forall\;Z\in\mathfrak{X}(M).
\end{equation}
This yields that if $\varpi$ is a basic $p$ form, then so is $\phi \varpi$, and that $\phi^2\varpi=-\varpi$.
Then we have $\bigwedge^1_{\mathrm{B}}\otimes_{\mathbb{R}}\mathbb{C}:=\bigwedge_{\mathrm{B}}^{1,0}+\bigwedge_{\mathrm{B}}^{0,1}$
and
\begin{equation*}
\Omega_{\mathrm{B}}^1\otimes_{\mathbb{R}}\mathbb{C}:=\Omega_{\mathrm{B}}^{1,0}+\Omega_{\mathrm{B}}^{0,1},
\end{equation*}
where $\Omega_{\mathrm{B}}^{1,0}$ and $\Omega_{\mathrm{B}}^{0,1}$  are eigenspaces of $\phi$ with eigenvalues $-\sqrt{-1}$ and $\sqrt{-1}$, respectively.
We also denote that $\bigwedge_{\mathrm{B}}^{p,q}:=\bigwedge^p\left(\bigwedge_{\mathrm{B}}^{1,0}\right)
\otimes\bigwedge^q\left(\bigwedge_{\mathrm{B}}^{0,1}\right)$, and
$$
\Omega_{\mathrm{B}}^{p,q}:=\bigwedge^p\left(\Omega_{\mathrm{B}}^{1,0}\right)
\otimes\bigwedge^q\left(\Omega_{\mathrm{B}}^{0,1}\right).
$$
Then we have $\bigwedge_{\mathrm{B}}^p\otimes_{\mathbb{R}}\mathbb{C}=\bigoplus_{r+s=p}\bigwedge_{\mathrm{B}}^{r,s}$, and
$$
\Omega_{\mathrm{B}}^p\otimes_{\mathbb{R}}\mathbb{C}=\bigoplus_{r+s=p}\Omega_{\mathrm{B}}^{r,s}.
$$
It is easy to find a local frame basis $\theta^1,\cdots,\theta^n$ of $\bigwedge_{\mathrm{B}}^{1,0}$ is and a local frame basis $e_1,\cdots,e_n$ of $\mathcal{D}^{1,0}$ such that
$$
\theta^i(e_j)=\delta^i_j,\quad \phi e_i=\sqrt{-1}e_i,\quad 1\leq i,\,j\leq n.
$$
Note that $\xi, e_1,\cdots,e_n,\bar e_1,\cdots,\bar e_n$ is a local frame basis of $TM\otimes_{\mathbb{R}}\mathbb{C}$ with dual $\eta,\theta^1,\cdots,\theta^n,\bar\theta^1,\cdots,\bar\theta^n$. We set
$$
[\xi,e_i]=C_{0i}^0\xi+C_{0i}^ke_k,\quad[e_i,e_j]=C_{ij}^0\xi+C_{ij}^ke_k+C_{ij}^{\bar k}\bar e_k,\quad
[e_i,\bar e_j]=C_{i\bar j}^0\xi+C_{i\bar j}^ke_k+C_{i\bar j}^{\bar k}\bar e_k,
$$
since $\mathcal{N}^{(3)}=0$ yields that $[\xi,e_i]^{(0,1)}=0$.
Then we get
\begin{equation}
\label{dtheta}
\mathrm{d}\theta^k=\mathrm{d}_{\mathrm{B}}\theta^k=-\frac{1}{2}C_{ij}^k\theta^i\wedge\theta^j-
\frac{1}{2}\overline{C_{ij}^{\bar k}}\bar\theta^i\wedge\bar\theta^j-C_{i\bar j}^k\theta^i\wedge\bar\theta^j,
\end{equation}
since $\mathrm{d}\theta^k$ is also basic.

From \eqref{dtheta},  we can split the basic exterior differential operator, $\mathrm{d}_{\mathrm{B}}: \Omega_{\mathrm{B}}^{\bullet} \otimes_{\mathbb{R}}\mathbb{C}\longrightarrow \Omega_{\mathrm{B}}^{\bullet+1} \otimes_{\mathbb{R}}\mathbb{C}$,  into four components (see for example \cite{angella} for the almost complex case)
\begin{equation}
\label{defndb}
\mathrm{d}_{\mathrm{B}}=A_{\mathrm{B}}+\partial_{\mathrm{B}}
+\overline{\partial}_{\mathrm{B}}+\overline{A}_{\mathrm{B}}
\end{equation}
with
\begin{align*}
A_{\mathrm{B}}:\;&\Omega_{\mathrm{B}}^{\bullet,\bullet}  \longrightarrow \Omega_{\mathrm{B}}^{\bullet+2,\bullet-1},  \\
\partial_{\mathrm{B}}:\;&\Omega_{\mathrm{B}}^{\bullet,\bullet}  \longrightarrow \Omega_{\mathrm{B}}^{\bullet+1,\bullet},\\
\overline{\partial}_{\mathrm{B}}:\;&\Omega_{\mathrm{B}}^{\bullet,\bullet}  \longrightarrow \Omega_{\mathrm{B}}^{\bullet,\bullet+1},  \\
\overline{A}_{\mathrm{B}}:\;&\Omega_{\mathrm{B}}^{\bullet,\bullet}  \longrightarrow \Omega_{\mathrm{B}}^{\bullet-1,\bullet+2}.
\end{align*}
In terms of these components, the condition $\mathrm{d}_{\mathrm{B}}^2=0$ can be written as
\begin{align*}
&A_{\mathrm{B}}^2=
\partial_{\mathrm{B}} A_{\mathrm{B}}+A_{\mathrm{B}}\partial_{\mathrm{B}}
=A_{\mathrm{B}}\overline{\partial}_{\mathrm{B}}+\partial_{\mathrm{B}}^2
+\overline{\partial}_{\mathrm{B}}A_{\mathrm{B}}=0,\\
&A_{\mathrm{B}}\overline{A}_{\mathrm{B}}
+\partial_{\mathrm{B}}\overline{\partial}_{\mathrm{B}}
+\overline{\partial}_{\mathrm{B}}\partial_{\mathrm{B}}
+\overline{A}_{\mathrm{B}}A_{\mathrm{B}}=0,\\
&\overline{A}_{\mathrm{B}}^2
=\overline{A}_{\mathrm{B}}\overline{\partial}_{\mathrm{B}}
+\overline{\partial}_{\mathrm{B}}\overline{A}_{\mathrm{B}}
=\partial_{\mathrm{B}}\overline{A}_{\mathrm{B}}
+\overline{\partial}_{\mathrm{B}}^2
+\overline{A}_{\mathrm{B}}\partial_{\mathrm{B}}
=0.
\end{align*}
For any basic function $\varphi\in C_{\mathrm{B}}^{\infty}(M,\,\mathbb{R})$,  from \eqref{jkuozhangb} and \eqref{waiweifen}, a direct computation yields
\begin{align*}
(\mathrm{d}_{\mathrm{B}}\phi\mathrm{d}_{\mathrm{B}}\varphi)(e_i,e_j)
=&-2\sqrt{-1}[e_i,e_j]^{(0,1)}(\varphi),\\
(\mathrm{d}_{\mathrm{B}} \phi\mathrm{d}_{\mathrm{B}}\varphi)(\overline{e}_i,\overline{e}_j)
=&2\sqrt{-1}[\overline{e}_i,\overline{e}_j]^{(1,0)}(\varphi),\\
(\mathrm{d}_{\mathrm{B}} \phi\mathrm{d}_{\mathrm{B}}\varphi)(e_i,\overline{e}_j)
=&2\sqrt{-1} \left(e_i\overline{e}_j(\varphi)- [e_i,\bar e_j]^{(0,1)}(\varphi)\right),
\end{align*}
where $[e_i,e_j]^{(0,1)}$ means the projection of $[e_i,e_j]$ to $\mathcal{D}^{0,1}$.
A direct calculation shows that
\begin{align*}
\sqrt{-1}\partial_{\mathrm{B}} \overline{\partial}_{\mathrm{B}}\varphi=\frac{1}{2}(\mathrm{d}_{\mathrm{B}} \phi\mathrm{d}_{\mathrm{B}}\varphi)^{(1,1)} =\sqrt{-1}\left(e_i\overline{e}_j(\varphi)-[e_i,\overline{e}_j]^{(0,1)}(\varphi)\right)\theta^i\wedge\overline{\theta}^j.
\end{align*}
We also deduce
\begin{equation*}
\mathcal{N}_\phi=-\Re\left(C_{ij}^{\bar k}(\theta^i\wedge \theta^j)\otimes \bar e_k\right)
-\frac{1}{4}\Re\left(\eta([e_i,e_j])(\theta^i\wedge \theta^j)\otimes\xi\right)+\frac{1}{4}\eta([e_i,\bar e_j])(\theta^i\wedge \bar\theta^j)\otimes\xi,
\end{equation*}
since $\mathcal{N}^{(3)}=0$ means $C_{0i}^{\bar k}=0$.

For the almost contact manifold $(M,\phi,\xi,\eta,\,g)$ with $\mathcal{N}^{(i)}=0,\,i=2,3,4$ (e.g., the K-contact manifold),  we have
$C^0_{0i}=C_{ij}^0=0$, and hence
\begin{equation*}
\mathcal{N}_\phi= -\Re\left(C_{ij}^{\bar k}(\theta^i\wedge \theta^j)\otimes \bar e_k\right)
 +\frac{1}{4}\eta([e_i,\bar e_j])(\theta^i\wedge \bar\theta^j)\otimes\xi .
\end{equation*}
If the almost contact manifold $(M,\phi,\xi,\eta,\,g)$ is normal (e.g., the Sasakian manifold), then there also holds $C_{0i}^{\bar k}=C_{ij}^{\bar k}=0$. This yields that
\begin{align}
\mathrm{d}_{\mathrm{B}}=& \partial_{\mathrm{B}}
+\overline{\partial}_{\mathrm{B}},\quad
 \partial_{\mathrm{B}}\overline{\partial}_{\mathrm{B}}
+\overline{\partial}_{\mathrm{B}}\partial_{\mathrm{B}}=0,\quad
\mathcal{N}_\phi= \frac{1}{4}\eta([e_i,\bar e_j])(\theta^i\wedge \bar\theta^j)\otimes\xi,\nonumber\\
\label{tddbar}
&\sqrt{-1}\partial_{\mathrm{B}} \overline{\partial}_{\mathrm{B}}\varphi=\frac{1}{2} \mathrm{d}_{\mathrm{B}} \phi\mathrm{d}_{\mathrm{B}}(\varphi),\quad \forall \;\varphi\in C_{\mathrm{B}}^\infty(M,\mathbb{R}).
\end{align}
\subsection{Connections on Almost Contact Manifolds}
 Let $(M,g)$ be a Riemannian manifold. Then we denote by $\nabla$ the Levi-Civita connection. We define the Riemannian curvature $R$ by
 $$
 R(W,Y)Z=\nabla_W\nabla_YZ-\nabla_Y\nabla_WZ-\nabla_{[W,Y]}Z,\quad \forall\,W,\,Y,\,Z\in\mathfrak{X}(M).
 $$

 Thanks to \cite[Theorem V-5.1]{kn}, an almost contact metric manifold $(M,\phi,\xi,\eta,g)$ is a Sasakian manifold if and only if
 \begin{equation}
 \label{sasanablaphi}
 (\nabla_Y\phi)Z=g(Z,\xi)Y-g(Y,Z)\xi,\quad\forall\,Y,\,Z\in\mathfrak{X}(M).
 \end{equation}
Let $(M,\,g)$ be a Riemannian manifold with $\dim_{\mathbb{R}}M=2n+1$ admitting a unit Killing vector field $\xi$. Then \cite[Theorem V-5.2 \& V-3.1]{kn} yields that $M$ is a Sasakian manifold if only if
\begin{equation*}
 R(Y,\xi)Z=g(Z,\xi)Y-g(Y,Z)\xi,\quad\forall\,Y,\,Z\in\mathfrak{X}(M),
\end{equation*}
or the sectional curvature for plane sections containing $\xi$ are equal to $1$ at every point of $M$.
\subsection{Transverse K\"ahler Structures on Sasakian Manifolds}
Let $(M,\phi,\xi,\eta,g)$ be a Sasakian manifold with $\dim_{\mathbb{R}}M=2n+1$. Then $(\mathcal{D},\phi_{\upharpoonright\mathcal{D}},\mathrm{d}\eta)$ (hence $(\nu(\mathcal{F}_\xi),\sigma^*\phi_{\upharpoonright\mathcal{D}},\sigma^*\mathrm{d}\eta)$ by $\sigma$ in \eqref{sigma}) gives $M$ a transverse K\"ahler structure with transverse K\"ahler form $\omega_
\dag:=\frac{1}{2}\mathrm{d}\eta$ and transverse K\"ahelr metric $g_\dag$ defined by \eqref{gdag}. We have the relationship that
$
g=g_\dag+\eta\otimes\eta.
$

Given a Sasakian structure $(\phi,\xi,\eta,g)$ on $M$ and any   $u\in C_{\mathrm{B}}^\infty(M,\mathbb{R})$, we define
\begin{equation}
\label{tildeeta}
\tilde\eta:=\eta+\sqrt{-1}(\overline{\partial}_{\mathrm{B}}-\partial_{\mathrm{B}})u.
\end{equation}
If $\mathrm{d}\tilde\eta=\mathrm{d}\eta+2\sqrt{-1}\partial_{\mathrm{B}}\overline{\partial}_{\mathrm{B}}u>0$ and $\tilde\eta\wedge(\mathrm{d}\tilde\eta)^n\not=0,$
then $(\xi,\tilde\eta,\tilde\phi,\tilde g)$ is also a Sasaki structure homologous to $(\xi,\eta,\phi,g)$ (see \cite{sasakigeo}) on $M$, where
\begin{align}
\label{tildephi}
\tilde \phi:=&\phi- \xi\otimes \sqrt{-1}(\overline{\partial}_{\mathrm{B}}-\partial_{\mathrm{B}})u\circ\phi,\\
\label{tildeg}
\tilde g:=&\frac{1}{2}\mathrm{d}\tilde\eta\circ(\mathrm{Id}\otimes\tilde\phi)+\tilde\eta\otimes\tilde\eta.
\end{align}
These deformations fix the Reeb field $\xi$ and change $(\eta,\phi,g)$ and hence $\mathcal{D}$; however, the quotient vector bundle $\nu(\mathcal{F}_\xi)$ is invariant. We equip $\nu(\mathcal{F}_\xi)$ with a complex structure $\mathbf{I}$ invariant under the deformations above given by
\begin{equation}
\label{ti}
\mathbf{I}(V):=\pi\circ\phi\circ\sigma(V),\quad \forall\;V\in  \nu(\mathcal{F}_\xi).
\end{equation}
\subsection{Vector Bundles on Sasakian Manifolds}\label{secvectorbundle} We recall the concepts of foliated/transverse vector bundles in \cite{bs2010} originated from \cite{molino1968} and \cite{kambertondeur1971,kambertondeur1971a}.
Let $M$ be a real smooth manifold and $S\subset TM$ an involutive subbundle of $TM$ (i.e., smooth sections of $S$ are closed under the operation of the Lie bracket). Then the partial connection of a vector bundle $E\to M$ is given by
\begin{equation*}
\nabla^0:\;\Gamma(S)\times \Gamma (E)\to \Gamma(E),\quad (Z,s)\mapsto \nabla^0_Zs,
\end{equation*}
satisfying
\begin{align*}
\nabla^0_{W+fZ}s=&\nabla^0_Ws+f\nabla^0_Zs,\\
\nabla^0_Z(s+ft)=&\nabla^0_Zs+(Zf)t+f\nabla^0_Zt,
 \quad \forall\,W,Z\in\Gamma(S),\;\forall\,s,t\in\Gamma(E),\;\forall\,f\in C^\infty(M,\mathbb{R}).
\end{align*}
Let $(M,\phi,\xi,\eta,g)$ be a Sasakian manifold with $\dim_{\mathbb{R}}M=2n+1$.  Then a foliated/transverse complex vector bundle on $M$ is a pair of $(E,\nabla^0),$ where $E$ is a smooth complex vector bundle on $M$ and $\nabla^0$ is a partial connection in the direction $L_\xi.$ A foliated/transverse Hermitian structure $h$ on $(E,\nabla^0)$ is a smooth Hermitian structure on the complex vector bundle $E$ which is preserved by $\nabla^0.$ We call $(E,\nabla^0,h)$ foliated/transverse complex Hermitian vector bundle. We use the notation
\begin{equation*}
\Gamma_{\mathrm{B}}(E):=\left\{s\in \Gamma(E):\;\nabla^0_\xi s=0\right\}.
\end{equation*}
The foliated/transverse Hermitian structure $h$ means a reduction of structure group of the associated frame bundle of $E$ from $\mathrm{GL}(r,\mathbb{C})$ with $\mathrm{rank}E=r$ to $\mathrm{U}(r)$ as a foliated principle bundle. Such a reduction does not always exist and hence not every foliated/transverse complex vector bundle admits a foliated/transverse Hermitian structure.

A foliated/transverse holomorphic vector bundle on $M$ is a pair of $(\{E,\nabla^0\},\nabla^1)$, where $(E,\nabla^0)$ is a foliated/transverse complex vector bundle and $\nabla^1$ is a flat partial connection of $E$ in the direction $(L_\xi\otimes_{\mathbb{R}}\mathbb{C})\oplus \mathcal{D}^{0,1}$ (an involutive subbundle) and coincide with $\nabla^0$ in the direction $\xi$. We call $(\{E,\nabla^0\},\nabla^1,h)$ foliated/transverse holomorphic Hermitian vector bundle. Given such a foliated/transverse holomorphic Hermitian vector bundle $(\{E,\nabla^0\},\nabla^1,h)$,  the adapted Chern connection $\nabla^{\mathrm{C}}$ on $E$ is the unique connection which preserve $h$ and coincides with $\nabla^1$ in the direction $(L_\xi\otimes_{\mathbb{R}}\mathbb{C})\oplus \mathcal{D}^{0,1}.$ We denote by $\mathcal{K}(E,h)$ the curvature of the connection $\nabla^{\mathrm{C}},$ which is a basic $(1,1)$ form with values in $\mathrm{End}(E).$ By the Chern-Weil theory \cite{chernclass}, the Chern form $c_j(E,h)$ of the foliated/transverse holomorphic Hermitian vector bundle $(\{E,\nabla^0\},\nabla^1,h)$ is defined by
\begin{equation*}
\det\left(\mathrm{Id}_E+\frac{\sqrt{-1}}{2\pi}\mathcal{K}(E,h)\right)=\sum_{i\geq 0}c_i(E,h),
\end{equation*}
where $c_i(E,h)$ is a closed basic real $(i,i)$ form for $i\geq 0.$ We say that
$$
c_i^{\mathrm{BC,b}}(E)=\{c_i(E,h)\}\in H^{i,i}_{\mathrm{BC,b}}(M,\nu(\mathcal{F}_\xi)):=\frac{\{\mathrm{d}_{\mathrm{B}}\text{-closed basic real $(i,i)$ forms} \}}{\sqrt{-1}\partial_{\mathrm{B}}\bar\partial_{\mathrm{B}} \{\text{basic real $(i-1,i-1)$ forms}\}}
$$
is the $i^{\mathrm{th}}$ basic Chern class of $E.$

\subsection{Local Coordinates on Sasakian Manifolds}
\label{seclocalcoordinate}
Let $(M,\,\phi,\,\xi,\,\eta,\,g)$ be a compact Sasakian manifold with $\dim_{\mathbb{R}}M=2n+1$. Then there is a foliated atlas $ \mathscr{U}=\{(U_\alpha,\varphi_\alpha)\} $  (see for example \cite[Theorem 1]{gkn00}) given by $$\varphi_\alpha=(x^{(\alpha)},z^{(\alpha)}_1,\cdots,z^{(\alpha)}_n):U_\alpha\to (-a,a)\times V_\alpha\subset \mathbb{R}\times \mathbb{C}^n$$
such that
\begin{equation*}
f_\beta=\tau_{\beta\alpha}\circ f_\alpha,\quad \text{on}\quad U_\alpha\cap U_\beta\not=\emptyset,
\end{equation*}
where $f_\alpha:\,U_\alpha\to V_\alpha$ is the natural composition of projection and $\varphi_\alpha$
and $\{\tau_{\alpha\beta}\}$ is a family of bi-holomorphic maps defined by
$$\tau_{\beta\alpha}:\,f_\alpha(U_\alpha\cap U_\beta)\to f_\beta(U_\alpha\cap U_\beta),\quad U_\alpha\cap U_\beta=\emptyset$$
 satisfying the cocycle conditions
\begin{equation*}
\tau_{\gamma\alpha}=\tau_{\gamma\beta}\circ\tau_{\beta\alpha},\quad\text{on}\quad U_\alpha\cap U_\beta\cap U_\gamma\not=\emptyset.
\end{equation*}
Moreover, on such a foliated coordinate patch $(U;x,z_1,\cdots,z_n)$, there holds
\begin{enumerate}
\item[1)] the Reeb vector field $\xi=\partial_x:=\partial/\partial x;$
\item[2)] a smooth function $K:\,V\to \mathbb{R}$ is basic, i.e., $\xi (K)=0;$
\item[3)]the contact form $\eta$ with
\begin{equation*}
\eta=\mathrm{d}x-\sqrt{-1}\sum_{i=1}^nK_i\mathrm{d}z_i
+\sqrt{-1}\sum_{j=1}^nK_{\overline{j}}\mathrm{d}\overline{z}_j,
\end{equation*}
where $K_i:=\partial K/\partial z_i,\,K_{\overline{j}}:=\partial K/\partial \overline{z}_j;$
\item[4)]the metric $g$ with
\begin{equation*}
g=\eta\otimes\eta+\sum_{i,j=1}^nK_{i\overline{j}}(\mathrm{d}z_i\otimes\mathrm{d}\overline{z}_j+
\mathrm{d}\overline{z}_j\otimes\mathrm{d}z_i),
\end{equation*}
where $K_{i\overline{j}}:=\partial^2K/\partial z_i\partial\overline{z}_j;$
\item[5)] the tensor field $\phi$ with
\begin{equation*}
  \phi=\sqrt{-1}\sum_{i=1}^n\left(\partial_i+\sqrt{-1}K_i\partial_x\right)\otimes \mathrm{d}z_i
  -\sqrt{-1}\sum_{j=1}^n\left(\partial_{\overline{j}}-\sqrt{-1}K_{\overline{j}}\partial_x\right)\otimes \mathrm{d}\overline{z}_j,
\end{equation*}
where $\partial_i:=\partial/\partial z_i,\,\partial_{\overline{j}}:=\partial/\partial\overline{z}_j;$
\item[6)] one can choose some basic smooth function $K:\,V\to \mathbb{R}$ such that
$$\xi (K)=0,\quad \partial_{\mathrm{B}} K(p)=\overline{\partial}_{\mathrm{B}}K(p)=0,\quad i,\,j=1,\cdots,n.$$
    Indeed, thanks to \cite{gkn00},  the following transformations
    $$
    K\mapsto K+f(z)+\overline{f(z)},\quad x\mapsto x+\sqrt{-1}\overline{f}-\sqrt{-1}f(z)
    $$
    where $f$ is a holomorphic function  of $z=(z_1,\cdots,z_n)$, do not change the Sasakian structure above.
We can take the transformation given by
\begin{align*}
 K\mapsto&K-2\Re\left(\sum_{i=1}^nK_i(p)z_i\right),\\
 x\mapsto&x-\sqrt{-1}\sum_{i=1}^nK_{\overline{j}}(p)\overline{z}_j+\sqrt{-1}\sum_{i=1}^nK_i(p)z_i,
 \end{align*}
as desired.
 \item[7)]   we can cover $M$ by finite foliated local coordinate charts $\{U_\alpha\}$ each of which is diffeomorphism to  $(-\varepsilon_0,\varepsilon_0)\times B_2(\mathbf{0})$ with $\varepsilon_0>0$ fixed since $M$ is compact, where $B_2(\mathbf{0})$ is the ball in $\mathbb{C}^n$ centered at origin with radius $2$, and on $B_2(\mathbf{0})$, there holds
 \begin{equation*}
C^{-1}\delta_{ij}\leq \omega_\dag\leq C\delta_{ij}
 \end{equation*}
 for a uniform constant $C$. Moreover, $\{\frac{1}{2}U_\alpha\}$ each of which is diffeomorphism to  $(-\varepsilon_0/2,\varepsilon_0/2)\times B_1(\mathbf{0})$ still cover $M.$
\end{enumerate}
Now we see that $(\mathbb{R}_+\times U;\,r,x,z_i)$ is a local coordinate patch of $X:=\mathbb{R}_+\times M $. We have
\begin{align*}
J\partial_x=&-r\partial_r,\quad
J\partial_r=\frac{1}{r}\partial_x,\\
 J\partial_i=&\sqrt{-1} \left(\partial_i+\sqrt{-1}K_i\partial_x\right)+\sqrt{-1}K_ir\partial_r,\\
J\partial_{\overline{j}}=&-\sqrt{-1} \left(\partial_{\overline{j}}-\sqrt{-1}K_{\overline{j}}\partial_x\right)-\sqrt{-1}K_{\overline{j}}r\partial_r.
\end{align*}For any $p$ form $\vartheta$, we define
$$
(J\vartheta)(X_1,\cdots,X_p):=(-1)^p\vartheta(JX_1,\cdots,JX_p),\quad \forall\,X_1,\cdots,X_p\in\mathfrak{X}(X).
$$
Then we have that
\begin{align*}
J(\mathrm{d}z_i)=&-\sqrt{-1}\mathrm{d}z_i,\quad
J(\mathrm{d}\overline{z}_j)=\sqrt{-1}\mathrm{d}\overline{z}_j,\\
J(\mathrm{d}x)=&-\frac{1}{r}\mathrm{d}r+K_i\mathrm{d}z_i+K_{\overline{j}}\mathrm{d}\overline{z}_j
=-\frac{1}{r}\mathrm{d}r+\mathrm{d}K,\\
J(\mathrm{d}r)=&r\mathrm{d}x-\sqrt{-1}rK_i\mathrm{d}z_i+\sqrt{-1}rK_{\overline{j}}\mathrm{d}\overline{z}_j
=r\mathrm{d}x+rJ\mathrm{d}K=r\eta.
\end{align*}
It follows that $(\mathbb{R}_+\times V;\,z_0:=\log r-K+\sqrt{-1}x,z_1,\cdots,z_n)$ is a local holomorphic coordinate patch of $X$, which is first proved by \cite[Lemma 2.1]{hl18}. Indeed, since $\mathrm{d}z_0\wedge\cdots\wedge\mathrm{d}z_n\wedge \mathrm{d}\overline{z}_0\wedge\cdots\wedge\mathrm{d}\overline{z}_n\not=0,$ we see that $(z_0,\cdots,z_n)$ is local coordinates. Furthermore,  the equalities above yield that
$$
 J(\mathrm{d}z_i)=-\sqrt{-1}\mathrm{d}z_i,\quad i=0,1,\cdots,n,
$$
as desired.

For later use, we give an equivalent statement of  (holomorphic) foliated/transverse complex vector bundle on Sasakian manifold $M$ in the  \v{C}ech language, and the equivalence follows from an argument using the Frobenius theorem. A complex vector bundle $E$ with rank $r$ on $M$ is foliated/transverse if there exists a family of  local trivialization maps $\phi_U:\;U \times \mathbb{C}^r\to E_{\upharpoonright U}$ a foliated open covering $\mathscr{U}=\{U,V,\cdots\}$ of $M$ satisfying
\begin{enumerate}
\item the transition functions $\{g_{UV}\in\mathrm{GL}(r,\mathbb{C})\}$ satisfy $$\xi (g_{UV})\equiv\mathbf{0}_{r\times r},$$
where
 $$g_{UV}(\mathbf{p}):= \phi_{U,\mathbf{p}}^{-1}\circ \phi_{V,\mathbf{p}}$$ for any $\mathbf{p}\in U \cap V$ with $\phi_{U,\mathbf{p}}(\mathbf{v}):=\phi_U(\mathbf{p},\mathbf{v}) ,\,\forall \, \mathbf{v}\in\mathbb{C}^r;$
 \item  Any local section $s\in \Gamma_{\mathrm{B}}(U,E)$ on $U$ can be written as $$s=\sum_\alpha f_\alpha s_{U,\alpha},\quad f_\alpha\in C^\infty_{\mathrm{B}}(U,\mathbb{C}),\quad 1\leq \alpha\leq r,$$
     where $$\left\{s_{U,\alpha}:\,U\to E,\,\mathbf{p}\mapsto\phi_{U}(\mathbf{p},\mathbf{e}_\alpha) \right\}_{1\leq \alpha\leq r}$$ is a (basic holomorphic if $E$ is holomorphic foliated/transverse vector bundle) basis of $\Gamma_{\mathrm{B}}(U_\alpha,E),$ where $\mathbf{e}_\alpha$ denotes the $i^{\mathrm{th}}$ standard basis vector in $\mathbb{C}^r $ with $$\nabla^0_\xi s_{U,\alpha}=0.$$
\end{enumerate}
Furthermore,   $E$ is called holomorphic  foliated/transverse if  such a family of local trivialization maps also satisfies $$Z(g_{UV})\equiv\mathbf{0}_{r\times r},\;\nabla^1_Z s_{U,\alpha}=0,\;\forall\,Z\in \Gamma((L_\xi\otimes_{\mathbb{R}}\mathbb{C})\oplus \mathcal{D}^{0,1}),\;\forall\,1\leq \alpha\leq r.$$
We give some calculation of the adapted Chern connection on $U$ and  write $s_{U,\alpha}=s_\alpha$ for short.  We use the notations
 $$
 h_{\alpha\bar\beta}:=h(s_\alpha,s_\beta),\quad \nabla^{\mathrm{C}}_{\partial_i}s_\alpha=:\Gamma_{i\alpha}^\beta s_\beta,\quad
 \nabla^{\mathrm{C}}_{\partial_i}\nabla^{\mathrm{C}}_{\partial_{\bar j}}-\nabla^{\mathrm{C}}_{\partial_{\bar j}}\nabla^{\mathrm{C}}_{\partial_i}s_\alpha=:R_{i\bar j\alpha}{}^\beta s_\beta,\quad
 R_{i\bar j\alpha\bar\beta}:=R_{i\bar j\alpha}{}^\gamma h_{\gamma\bar\beta},
 $$
 where $h_{\alpha\bar\beta}\in C^\infty_{\mathrm{B}}(U,\mathbb{R}),\;1\leq \alpha,\beta\leq r.$
 Then we have
 \begin{equation*}
 \Gamma_{i\alpha}^\beta=h^{\bar\gamma \beta}\left(\partial_ih_{\alpha\bar\gamma}\right),
 R_{i\bar j\alpha\bar\beta}=-\partial_i\partial_{\bar j}(h_{\alpha\bar\beta})
 +h^{\bar \gamma\delta}
 \left(\partial_ih_{\alpha\bar\gamma}\right)
 \left(\partial_{\bar j}h_{\delta\bar\beta}\right).
 \end{equation*}
 The basic Chern-Ricci form is given by
 \begin{equation}
 \label{ricciforme}
 \ric(E,h)=\sqrt{-1}R_{i\bar j\alpha}{}^\alpha \mathrm{d}z_i\wedge\mathrm{d}\bar z_j
 =-\partial_{\mathrm{B}}\bar\partial_{\mathrm{B}}\log \det(h_{\alpha\bar\beta})\in 2\pi c_1^{\mathrm{BC,b}}(E).
 \end{equation}
 It follows from \cite{swz10} that $(\{\nu(\mathcal{F}_\xi),\nabla^{\mathrm{B}}\},\bar\partial_{\mathrm{B}},g_\dag)$ is a foliated/transverse holomorphic Hermitian bundle, where $\nabla^{\mathrm{B}}$ is the Bott connection given by
 $$
 \nabla^{\mathrm{B}}_\xi V:=\pi\left([\xi,\sigma(V)]\right), \quad \forall \,V\in \Gamma(\nu(\mathcal{F}_\xi)).
 $$
 It follows from \eqref{sasanablaphi} that the transverse connection $\nabla^\dag$ induced from the Levi-Civita connection $\nabla$ of $g$ is the adapted Chern connection, where for any $V\in \Gamma(\nu(\mathcal{F}_\xi)),$ the connection $\nabla^\dag$ is given by
 \begin{equation}
\label{tconnectiondefn}
\nabla^\dag_{Z}V:=\left\{
\begin{array}{ll}
  \pi\left(\nabla_Z\sigma(V)\right), & \quad\text{if}\;Z\in\Gamma(\mathcal{D}); \\
  \nabla^{\mathrm{B}}_\xi V, & \quad\text{if}\;Z=\xi.
\end{array}
\right.
\end{equation}
We set
\begin{equation}
\label{eiej}
e_i:=\partial_i+\sqrt{-1}K_i\partial_x,\quad \bar e_j=e_{\bar j}:=\partial_{\bar j}-\sqrt{-1}K_{\bar j}\partial_x.
\end{equation}
It follows that $\{\eta,\mathrm{d}z^i,\mathrm{d}\bar z^j\}$ is the dual basis of $\{\partial_x,e_i,e_{\bar j}\}$.
We deduce that
$\{\sigma^{-1}(e_i)\}_{1\leq i\leq n}$ is a basic holomorphic basis of $\Gamma_{\mathrm{B}}(U,\nu(\mathcal{F}_\xi)),$ and that $\omega_{\dag}$ is the transverse Hermitian metric on $\nu(\mathcal{F}_\xi),$ where
$$
\omega_{\dag}= \frac{1}{2}\mathrm{d}\eta:=\mn \sum_{i,j=1}^n(g_\dag)_{i\overline{j}}\md z_i\wedge\md \overline{z}_j
=\mn \sum_{i,j=1}^nK_{i\overline{j}}\md z_i\wedge\md \overline{z}_j.
$$
In the following, we will use the induced connection, also denoted by $\nabla^\dag,$ on $\bigwedge^1_{\mathrm{B}}\otimes_{\mathbb{R}}\mathbb{C}$ which is the dual of $\nu(\mathcal{F}_\xi).$ For this aim, let us fix some notations.
\begin{equation*}
\nabla^\dag_{i}:= \nabla^\dag_{\partial_i},\quad
\nabla^\dag_{\bar j}:=\nabla^\dag_{\partial_{\bar j}},\quad
\nabla^\dag_{i}\mathrm{d}z_k=-\Gamma_{ij}^k\mathrm{d}z_j, \quad
 R(\partial_i,\partial_{\bar j})\mathrm{d}z_\ell:=  -R_{i\bar jk}{}^\ell \mathrm{d}z_k.
\end{equation*}
A direct calculation yields that (see for example  \cite{swz10})
\begin{equation*}
\nabla^\dag_{\partial_x}\mathrm{d}z_k=0,\quad
\Gamma_{ij}^k= (g_\dag)^{\overline{q}k}\partial_i(g_\dag)_{j\overline{q}},\quad
 R_{i\overline{j}k}{}^{\ell}= -\partial_{\overline{j}}\Gamma_{ik}^{\ell},\quad R_{i\overline{j}k\overline{\ell}}=R_{i\overline{j}k}{}^p (g_\dag)_{p\overline{\ell}}
\end{equation*}
such that
\begin{equation*}
\overline{R_{i\bar jk\bar \ell}}=R_{j\bar i\ell\bar k},\quad
R_{i\bar jk\bar \ell}=R_{k\bar j i\bar \ell}=R_{i\bar \ell k\bar j}= R_{k\bar \ell i\bar j},
\end{equation*}
where $((g_\dag)^{\overline{q}k})$ is the inverse matrix of $((g_\dag)_{j\overline{\ell}})$.

From \eqref{tddbar} and \eqref{ricciforme}, the basic Chern-Ricci form of $ \nu(\mathcal{F}_\xi)$ given by
\begin{equation}
\label{transversericciform}
\ric(\omega_\dag)
= -\sqrt{-1}\partial_{\mathrm{B}}\bar \partial_{\mathrm{B}}\log\det((g_{\dag})_{i\bar j})
= -\frac{\sqrt{-1}}{2}\mathrm{d}_{\mathrm{B}}\phi\mathrm{d}_{\mathrm{B}}\log\det((g_{\dag})_{i\bar j})
\in 2\pi c_1^{\mathrm{BC,b}}(\nu(\mathcal{F}_\xi))
\end{equation}
is a $\mathrm{d}_{\mathrm{B}}$-closed real basic $(1,1)$ form.

For a basic $(1,0)$ form $a=a_{\ell}\md z^{\ell}$,  we define covariant derivative $\nabla^\dag_ia_{\ell}$ by
\begin{align*}
\nabla^\dag_{i}a_{\ell}:=\partial_{i}a_{\ell}-\Gamma_{i\ell}^pa_p.
\end{align*}
Then we can deduce
\begin{align}\label{commutate}
[\nabla^\dag_{i},\nabla^\dag_{\overline{j}}]a_{\ell}=-R_{i\overline{j}\ell}{}^pa_p,\quad [\nabla^\dag_{i},\nabla^\dag_{\overline{j}}]a_{\overline{m}}=R_{i\overline{j}}{}^{\overline{q}}{}_{\overline{m}}a_{\overline{q}},
\end{align}
where $R_{i\overline{j}}{}^{\overline{q}}{}_{\overline{m}}=R_{i\overline{j}p}{}^{\ell}(g_\dag)^{\overline{q}p}(g_\dag)_{\ell \overline{m}}$.

For any $u\in C_{\mathrm{B}}^{\infty}(M,\mathbb{R})$, we have
\begin{align}\label{ricciidentityu}
\nabla^\dag_{i}u=\partial_{i}u=:u_i,\quad \nabla^\dag_{\overline{j}}u=\partial_{\overline{j}}u=:u_{\overline{j}},\quad \nabla^\dag_{\overline{j}}\nabla^\dag_{i}u=\partial_{\overline{j}}\partial_{i}u=:u_{i\overline{j}},\quad [\nabla^\dag_{i},\nabla^\dag_j]u=0.
\end{align}
Using \eqref{commutate}, we can get the following commutation formulae:
\begin{align}\label{ricciidentity}
\nabla^\dag_{\ell}u_{i\overline{j}}=&\nabla^\dag_{\overline{j}}\nabla^\dag_{\ell}u_i-R_{\ell\overline{j}i}{}^pu_p,\; \nabla^\dag_{\overline{m}}u_{p\overline{j}}=\nabla^\dag_{\overline{j}}u_{p\overline{m}},
\;\nabla^\dag_{\ell}u_{i\overline{q}}=\nabla^\dag_{i}u_{\ell\overline{q}},\\
\nabla^\dag_{\overline{m}}\nabla^\dag_{\ell}u_{i\overline{j}}=&\nabla^\dag_{\overline{j}}\nabla^\dag_{i}u_{\ell\overline{m}}
+R_{\ell\overline{m}i}{}^pu_{p\overline{j}}-R_{i\overline{j}\ell}{}^pu_{p\overline{m}}.
\nonumber
\end{align}
For any basic $(p,q)$ form
$$
\vartheta=\frac{1}{p!q!}\vartheta_{i_1\cdots i_p\overline{j_1}\cdots\overline{j_q}}\md z_{i_1}\wedge\cdots\wedge\md z_{i_p}\wedge\md \overline{z}_{j_1}\wedge\cdots\wedge\md \overline{z}_{j_q},
$$
the equalities \eqref{defnvol} and \eqref{defnbardag} yield that (see for example \cite{luqikeng} in the K\"ahler setup)
\begin{align}\label{starformulacomplex}
 \ast_{\dag} \vartheta =&\frac{(\mn)^n(-1)^{np+\frac{n(n-1)}{2}}\det g_{\dag}}{(n-p)!(n-q)!p!q!}\vartheta_{i_1\cdots i_p\overline{j_1}\cdots\overline{j_q}}g_{\dag}^{\overline{\ell_1}i_1}
\cdots g_{\dag}^{\overline{\ell_p}i_p}g_{\dag}^{\overline{j_1}k_1}\cdots g_{\dag}^{\overline{j_q}k_q}\\
&\delta_{\ell_1\cdots \ell_pb_1\cdots b_{n-p}}^{1\cdots\cdots n}\delta_{k_1\cdots k_qa_1\cdots a_{n-q}}^{1\cdots\cdots n}
\md z_{a_1}\wedge\cdots\wedge\md z_{a_{n-q}}\wedge\md \overline{z}_{b_1}\wedge\cdots\wedge\md\overline{z}_{b_{n-p}}\nonumber
\end{align}
and
\begin{align*}
\varpi\wedge\ast_{\dag} \overline{\vartheta}=\frac{1}{p!q!}\varpi_{\ell_1\cdots\ell_p\overline{k_1}\cdots \overline{k_{q}}} \overline{\vartheta_{i_1\cdots i_p\overline{j_1}\cdots\overline{j_q}}}g_{\dag}^{\overline{i_1}\ell_1}
\cdots g_{\dag}^{\overline{i_p}\ell_p}g_{\dag}^{\overline{k_1}j_1}\cdots g_{\dag}^{\overline{k_q}j_q} \frac{\omega_{\dag}^n}{n!},
\end{align*}
where $\varpi=\frac{1}{p!q!}\varpi_{i_1\cdots i_p\overline{j_1}\cdots\overline{j_q}}\md z_{i_1}\wedge\cdots\wedge\md z_{i_p}\wedge\md \overline{z}_{j_1}\wedge\cdots\wedge\md \overline{z}_{j_q}$ is another basic $(p,q)$ form and $\det g_{\dag}=\det((g_\dag)_{i\overline{j}})$. We also define the inner product of $\varpi$ and $\vartheta$ by
$$
\langle\varpi,\,\vartheta\rangle_{\mathrm{B}}:=\frac{1}{p!q!}\varpi_{\ell_1\cdots\ell_p\overline{k_1}\cdots \overline{k_{q}}} \overline{\vartheta_{i_1\cdots i_p\overline{j_1}\cdots\overline{j_q}}}g_\dag^{\overline{i_1}\ell_1}
\cdots g_\dag^{\overline{i_p}\ell_p}g_\dag^{\overline{k_1}j_1}\cdots g_\dag^{\overline{k_q}j_q}.
$$
Note that
$$
\ast_\dag 1=\ast \eta=\frac{\omega_\dag^n}{n!},
\quad \overline{\ast_\dag\vartheta}= \ast_\dag\overline{ \vartheta},
$$
where the second equality shows that $\ast_\dag$ is a real operator.

We fix a canonical orientation $\eta\wedge(\mathrm{d}\eta)^n$ and introduce the concepts of transverse  positivity on Sasakian manifolds (see \cite{co15} and \cite[Chapter III]{demaillybook1} for the complex case).

A basic $(p,p)$ form $\varpi$ is said to be transverse positive   if for any basic $(1,0)$ forms $\gamma_j,\,1\leq j\leq n-p$, then
$$
\varpi\wedge\sqrt{-1}\gamma_1\wedge\overline{\gamma_1}\wedge\cdots\wedge\sqrt{-1} \gamma_{n-p}\wedge\overline{\gamma_{n-p}}\wedge\eta
$$
is a positive volume form, and is said to be strongly transverse positive if $\varpi$ is a convex combination
\begin{equation*}
\varpi=\sum\Gamma_s\sqrt{-1}\gamma_{s,1} \wedge\overline{\gamma_{s,1}}\wedge\cdots\wedge\sqrt{-1} \gamma_{s,p}\wedge\overline{\gamma_{s,p}},
\end{equation*}
where $\gamma_{s,j}$'s are basic $(1,0)$ forms and $\Gamma_s\geq 0.$

Any transverse positive basic $(p,p)$ form $\varpi$ is real, i.e., $\overline{\varpi}=\varpi$. In particular, in the local coordinates, a real basic $(1,1)$ form
\begin{align}\label{11}
\upsilon=\mn\upsilon_{i\overline{j}}\md z_i\wedge\md\overline{z}_j
\end{align}
is transverse positive if and only if $(\upsilon_{i\overline{j}})$ is a semi-positive Hermitian matrix with $\xi(v_{i\bar j})\equiv0$ and we denote $\det \upsilon:=\det(\upsilon_{i\overline{j}})$.

Similarly, a real basic $(n-1,n-1)$ form
\begin{align}\label{tn-1}
\varrho=&(\mn)^{n-1}\sum\limits_{i,j=1}^n(-1)^{\frac{n(n+1)}{2}+i+j+1}\varrho^{\overline{j}i}\\
&\md z_1\wedge\cdots\wedge\widehat{\md z_i}\wedge\cdots\wedge\md z_n\wedge \md \overline{z}_1\wedge\cdots\wedge\widehat{\md\overline{z}_j}\wedge\cdots\wedge\md \overline{z}_n\nonumber
\end{align}
is transverse positive if and only if $(\varrho^{\overline{j}i})$ is a semi-positive Hermitian matrix with $\xi(\varrho^{\overline{j}i})\equiv0$ and we denote $\det\varrho:=\det(\varrho^{\overline{j}i})$.

We remark that one can call a real basic $(1,1)$ form $\upsilon$ ( resp. a real basic $(n-1,n-1)$ form $\varrho$) strictly transverse positive (denoted by $>_{\mathrm{b}}0$)
if the
Hermitian matrix $(\upsilon_{i \overline{j}})$ (resp. $(\varrho^{\overline{j} i})$) is positive
definite.

For a strictly transverse positive basic $(1,1)$ form $v$ defined as in \eqref{11}, we can deduce a strictly transverse positive $(n-1,n-1)$ form
\begin{align}\label{11n1n1formula}
\frac{v^{n-1}}{(n-1)!}=&(\mn)^{n-1}\sum\limits_{k,\ell=1}^n(-1)^{\frac{n(n+1)}{2}+k+\ell+1}\mathrm{det}(v_{i\overline{j}})\tilde{v}^{\overline{\ell}k}\\
&\md z_{ 1}\wedge\cdots\wedge\widehat{\md z_k}\wedge \cdots\wedge\md z_{n}\wedge\md\overline{z}_{ 1}\wedge\cdots\wedge\widehat{\md \overline{z}_{\ell}}\wedge\cdots\wedge\cdots\wedge \md\overline{z}_{n}\nonumber
\end{align}
where $(\tilde{v}^{\overline{\ell}k} )$ is the inverse matrix of $(v_{i\overline{j}})$, i.e.,  $\sum\limits_{\ell=1}^n\tilde{v}^{\overline{\ell}j}v_{k\overline{\ell}}=\delta_{k}^j$. Hence we have
\begin{align}\label{detchin-1}
\det\left(\frac{v^{n-1}}{(n-1)!}\right)=\left(\det v\right)^{n-1}.
\end{align}
From  \eqref{tn-1} and \eqref{11n1n1formula}, we get
 \begin{lem}
 \label{lemjia}
 There exists a bijection from the space of strictly transverse positive definite $(1,1)$ forms to strictly transverse positive definite $(n-1,n-1)$  forms, given by
\begin{align*}
v\mapsto \frac{v^{n-1}}{(n-1)!}.
\end{align*}
 \end{lem}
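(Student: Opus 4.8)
The plan is to show that the assignment $v \mapsto v^{n-1}/(n-1)!$ is a well-defined bijection between the cone of strictly transverse positive basic $(1,1)$ forms and the cone of strictly transverse positive basic $(n-1,n-1)$ forms. The key observation is that in local foliated coordinates this is exactly the classical pointwise correspondence on $\mathbb{C}^n$ between positive definite Hermitian matrices $(v_{i\bar j})$ and their ``adjugate-type'' partners, and that basicness ($\xi$-invariance of the coefficients) is preserved automatically because all the algebraic operations involved — taking $(n-1)$-st exterior powers, inverting the matrix, multiplying by the determinant — commute with the $\xi$-action. So the whole statement reduces to a fiberwise linear-algebra fact, stated coordinate-independently via \eqref{11}, \eqref{tn-1}, and \eqref{11n1n1formula}.

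First I would fix a strictly transverse positive basic $(1,1)$ form $v = \sqrt{-1}\,v_{i\bar j}\,\md z_i \wedge \md \bar z_j$ with $(v_{i\bar j})$ positive definite Hermitian and $\xi(v_{i\bar j}) \equiv 0$. Formula \eqref{11n1n1formula} expresses $v^{n-1}/(n-1)!$ explicitly with coefficients $\det(v_{i\bar j})\,\tilde v^{\bar\ell k}$, where $(\tilde v^{\bar\ell k})$ is the inverse of $(v_{i\bar j})$; since $(v_{i\bar j})$ is positive definite so is $(\tilde v^{\bar\ell k})$, hence so is $\det(v_{i\bar j})\,(\tilde v^{\bar\ell k})$, which by the local characterization in \eqref{tn-1} means $v^{n-1}/(n-1)!$ is strictly transverse positive; and its coefficients are $\xi$-invariant because determinant and matrix inversion are smooth functions of the $\xi$-invariant entries $v_{i\bar j}$. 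So the map lands in the right space. For injectivity and surjectivity I would exhibit the inverse explicitly: given a strictly transverse positive basic $(n-1,n-1)$ form $\varrho$ as in \eqref{tn-1} with $(\varrho^{\bar j i})$ positive definite, set $v_{i\bar j} := \big(\det(\varrho^{\bar q p})\big)^{\frac{1}{n-1}}\,\widehat\varrho_{i\bar j}$, where $(\widehat\varrho_{i\bar j})$ is the inverse of $(\varrho^{\bar j i})$; one checks from \eqref{detchin-1} that $\det(v_{i\bar j}) = \det(\varrho^{\bar q p})^{\frac{n}{n-1}}/\det(\varrho^{\bar q p}) = \det(\varrho^{\bar q p})^{\frac{1}{n-1}}$, so that $\det(v_{i\bar j})\,\tilde v^{\bar\ell k} = \varrho^{\bar\ell k}$, i.e.\ $v^{n-1}/(n-1)! = \varrho$ by \eqref{11n1n1formula}. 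The coordinate change $z^{(\alpha)} \mapsto z^{(\beta)}$ acts on both $(v_{i\bar j})$ and $(\varrho^{\bar j i})$ by the appropriate Jacobian conjugation, and the construction of the inverse map is equivariant under this, so it glues to a globally defined form; thus the two maps are mutually inverse.

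The only genuinely non-routine point is checking consistency of the local formulas under the transition maps $\tau_{\beta\alpha}$ of the foliated atlas of Section \ref{seclocalcoordinate}: one must verify that the pointwise inverse construction $\varrho \mapsto v$ transforms correctly, i.e.\ that the factor $\det(\varrho^{\bar q p})^{\frac{1}{n-1}}$ and the matrix inverse transform so as to make $v$ a bona fide basic $(1,1)$ form. This follows because under a holomorphic change of coordinates the coefficient matrix of an $(n-1,n-1)$ form transforms by $\varrho^{\bar j i} \mapsto |\det J|^2 \,\overline{J^{-1}}\,\varrho\,J^{-T}$ (schematically) while a $(1,1)$ form transforms by $v_{i\bar j} \mapsto J^{T} v \bar J$, and a short computation shows the defining relation $v^{n-1}/(n-1)! = \varrho$ is invariant — indeed it must be, since $v \mapsto v^{n-1}/(n-1)!$ is defined intrinsically via wedge product. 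I expect this bookkeeping to be the main (but still mild) obstacle; everything else is the classical linear algebra already encoded in \eqref{11n1n1formula} and \eqref{detchin-1}, together with the trivial remark that $\xi$-invariance of coefficients is preserved by the algebraic operations involved.
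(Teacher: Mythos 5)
Your proposal is correct and follows essentially the same route as the paper, which simply reads the bijection off from the explicit coefficient formula \eqref{11n1n1formula} together with the characterization \eqref{tn-1} and \eqref{detchin-1} (citing Michelsohn for the K\"ahler analogue). You merely fill in the details the paper leaves implicit — the explicit inverse $v_{i\bar j}=(\det\varrho)^{1/(n-1)}\widehat\varrho_{i\bar j}$ and the (automatic) coordinate-invariance and $\xi$-invariance — and your computations check out.
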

The above bijection is first proved in \cite{michelsohn} in the K\"ahler setup (cf. \cite[Formula (3.3)]{phong}).

For a basic real $(1,1)$ form $\phi$ defined as in \eqref{11} (need not be positive), \eqref{starformulacomplex} implies
\begin{align}
\ast_\dag\phi
=&(\mn)^{n-1}\sum\limits_{k,\ell=1}^n(-1)^{\frac{n(n-1)}{2}+n+k+\ell+1}(\det\omega)\phi^{\overline{\ell}k}\\
&\md z_1\wedge\cdots\wedge\widehat{\md z_k}\wedge\cdots\wedge \md z_n\wedge\md \overline{z}_1\wedge\cdots\wedge\widehat{\md\overline{z}_{\ell}}\wedge\cdots\wedge\md\overline{z}_n,\nonumber
\end{align}
where $\phi^{\overline{\ell}k}=g^{\overline{\ell}i}\phi_{i\overline{j}}g^{\overline{j}k}$. Hence, if $\xi$ is another basic real $(1,1)$ form with $\det\xi\neq0$, then we can deduce
\begin{align}\label{astdet}
\frac{\det(\ast_\dag\phi)}{\det(\ast_\dag\xi)}=\frac{\det\phi}{\det\xi}.
\end{align}
We need the following useful formulae   and the proofs are direct and complicated computation.

For any basic real $(1,1)$ form $\chi=\mn\chi_{i\overline{j}}\md z_i\wedge\md \overline{z}_j$, we have
\begin{align}
\label{formula1}
\ast_\dag(\chi\wedge\omega^{n-2})=(n-2)!\left[(\tr_{\omega_\dag}\chi)\omega_\dag-\chi\right].
\end{align}

For any $u\in C_{\mathrm{B}}^\infty(M,\mathbb{R})$, we define
\begin{equation}
  \label{defnlapt}
\Delta_{\mathrm{B}}u:=\frac{n\mathrm{d}_{\mathrm{B}}\phi\mathrm{d}_{\mathrm{B}}u\wedge\omega_\dag^{n-1}}{2\omega_\dag^n}
=\frac{n\sqrt{-1}\partial_{\mathrm{B}}\overline{\partial}_{\mathrm{B}}u\wedge\omega_\dag^{n-1}}
{\omega_\dag^{n}}
=g_\dag^{\overline{j}i}\partial_i\partial_{\overline{j}}u.
\end{equation}
If $M$ is a Sasakian manifold, then we have $\Delta_{\mathrm{B}}u=-\frac{1}{2}\Delta_{\mathrm{d}_{\mathrm{B}}}u$; otherwise, the difference of these two operators is another operator with order no larger than one (see \cite{kt87,kt88}).
\section{Zero Order Estimate}\label{sec0order}
In this section, we prove the $L^\infty(M)$ estimate of the solution to \eqref{equ1}.
\begin{thm}
\label{thm0order}
Suppose that $u\in C_{\mathrm{B}}^\infty(M,\mathbb{R})$ is a solution to \eqref{equ1} with $\sup_Mu=0,$ and that the function $\underline{u}\in C_{\mathrm{B}}^\infty(M,\mathbb{R})$ is a transverse $\mathcal{C}$-subsolution to \eqref{equ1}. There exists a uniform constant $C$ depending only on $\eta,\,g,\,\omega_\dag$ and $\underline{u}$ such that
\begin{equation}
\label{equ0order}
\sup_M|u|\leq C.
\end{equation}
\end{thm}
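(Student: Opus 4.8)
The strategy is Błocki's Alexandrov--Bakelman--Pucci (ABP) argument, in the form adapted by Székelyhidi to fully nonlinear equations, transplanted to the transverse setting. Everything localizes: since $u$ and $\underline u$ are basic, on a foliated chart $(U;x,z_1,\dots,z_n)$ of Section~\ref{seclocalcoordinate} they are independent of $x$ and descend to functions on a ball $V\subset\C^n$, on which $\omega_\dag=\mn K_{i\bar j}\md z^i\wedge\md\bar z^j$ is an honest Kähler form, $\mn\partial_{\mathrm{B}}\overline\partial_{\mathrm{B}}u=\mn u_{i\bar j}\md z^i\wedge\md\bar z^j$, $\Delta_{\mathrm{B}}u=g_\dag^{\bar ji}u_{i\bar j}$, and $A_{\dag,u}$ has matrix $g_\dag^{\bar ki}(\beta_{j\bar k}+u_{j\bar k})$ with $\beta_\dag=\mn\beta_{i\bar j}\md z^i\wedge\md\bar z^j$. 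By item~7) of Section~\ref{seclocalcoordinate} we may fix a finite cover of $M$ by such charts on which $C^{-1}\delta_{ij}\le\omega_\dag\le C\delta_{ij}$ and whose half-charts still cover $M$, so the claim reduces to a uniform bound for the descended functions on fixed Euclidean balls, and the proof becomes the standard complex one. Since $\sup_M u=0$, it suffices to bound $\inf_M u$ from below.

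Let $x_0$ be a minimum point of $u-\underline u$, fix a ball $B_1\subset\C^n$ of the cover centered at $x_0$, and set $-L:=\min_M(u-\underline u)$ (we may assume $L\ge0$, otherwise $\inf_M u$ is already controlled). On $B_1$ put $v:=u-\underline u+\epsilon|z|^2$, with $\epsilon>0$ a small constant to be fixed in terms of $g,\omega_\dag,\beta_\dag$ and $\underline u$. Then $v(x_0)=-L$ while $v\ge-L+\epsilon$ on $\partial B_1$, so $v$ has a deep interior minimum; for
$$
P:=\bigl\{x\in B_1:\ |Dv(x)|<\epsilon/2,\ \ v(y)\ge v(x)+Dv(x)\cdot(y-x)\ \ \forall\,y\in B_1\bigr\}
$$
one checks, by minimising $v(z)-p\cdot z$ over $\overline{B_1}$ for each $|p|<\epsilon/2$, that $Dv(P)\supseteq B_{\epsilon/2}(0)$; since $v$ lies above its tangent plane at each $x\in P$, $D^2v(x)\ge0$ there, and the area formula gives
$$
c_n\,\epsilon^{2n}\le|B_{\epsilon/2}(0)|\le\int_P|\det D^2v|=\int_P\det{}_{\R}(D^2v).
$$

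The heart of the argument is to bound $\det_{\R}(D^2v)$ pointwise on $P$, and this is exactly where the transverse $\mathcal C$-subsolution enters. For $x\in P$, $D^2v(x)\ge0$ gives $(v_{i\bar j}(x))\ge0$, i.e.\ (unravelling the definition of $A_{\dag,\cdot}$) $A_{\dag,u}(x)\ge A_{\dag,\underline u}(x)-C\epsilon\,\mathrm{Id}$, so by the min--max principle and the symmetry of $f$, $\lambda(A_{\dag,u}(x))\in\lambda(A_{\dag,\underline u}(x))-(C\epsilon,\dots,C\epsilon)+\overline{\Gamma_n}$. As $u$ solves \eqref{equ1} we also have $f(\lambda(A_{\dag,u}(x)))=\psi(x)$, hence $\lambda(A_{\dag,u}(x))$ lies in $\bigl(\lambda(A_{\dag,\underline u}(x))-(C\epsilon,\dots,C\epsilon)+\overline{\Gamma_n}\bigr)\cap\{f=\psi(x)\}$. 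By Definition~\ref{tcsubsol} this set is bounded for $\epsilon=0$, uniformly in $x$ by continuity and compactness of $M$; and since being a transverse $\mathcal C$-subsolution is stable under small perturbations, for $\epsilon$ small it stays uniformly bounded, say by $R$. Thus $A_{\dag,u}(x)$, hence $(u_{i\bar j}(x))$ and $(v_{i\bar j}(x))$, are bounded on $P$; combining with $D^2v(x)\ge0$ and the Wirtinger identity $\tr_{\R}(D^2v)=4\tr(v_{i\bar j})$, the arithmetic--geometric mean inequality yields $\det_{\R}(D^2v)\le C$ on $P$. Feeding this back, $|P|\ge c_n\epsilon^{2n}/C=:\theta>0$, independent of $u$. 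Moreover, for $x\in P$ the contact inequality with $|Dv(x)|<\epsilon/2$ forces $v(x)\le\min_{B_1}v+\epsilon=-L+\epsilon$, so $u(x)\le\underline u(x)-L+\epsilon\le C-L$; hence $|\{u\le C-L\}|\ge\theta$.

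To conclude, one combines this with a uniform bound $\int_M(-u)\,\mathrm{dvol}_g\le C'$ for solutions normalised by $\sup_M u=0$ --- obtained in the standard way by pairing \eqref{equ1} with $u-\underline u$, using the concavity of $f$, inequality \eqref{filambdai}, and the basic (transverse) Poincaré inequality on $M$ --- which gives $(L-C)\theta\le\int_P(-u)\le\int_M(-u)\le C'$, so $L\le C$, and with $\sup_M u=0$ the estimate \eqref{equ0order} follows. The single genuinely nontrivial step is the pointwise bound on $\det_{\R}(D^2v)$ over the contact set: this is where Definition~\ref{tcsubsol} (and its openness) does all the work, replacing for general $f$ the elementary inequality $\det_{\R}(D^2v)\le 2^{2n}\det(v_{i\bar j})^2$ that would suffice in the Monge--Ampère case. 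All remaining ingredients are routine, modulo verifying that the foliated charts and the basic Poincaré inequality behave exactly as in the Kähler/Hermitian situation.
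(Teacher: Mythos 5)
Your proof is, in its main body, the same as the paper's: both run the B{\l}ocki--Sz\'ekelyhidi ABP argument in a foliated chart, with the transverse $\mathcal{C}$-subsolution (uniformized by compactness, i.e.\ the choice of $\delta,R$ in \eqref{eigenbd}) supplying the uniform bound on $\lambda(A_{\dag,u})$, hence on $(v_{i\bar j})$, over the contact set $P$. The paper normalizes $\underline u=0$ by absorbing it into $\beta_\dag$, and bounds $\det_{\mathbb R}(D^2v)$ on $P$ via B{\l}ocki's inequality $\det_{\mathbb R}(D^2v)\le 2^{2n}(\det(v_{i\bar j}))^2$ where you use AM--GM on the nonnegative real Hessian together with $\mathrm{tr}_{\mathbb R}(D^2v)=4\sum_i v_{i\bar i}$; both are legitimate once the complex Hessian is bounded on $P$, so these are cosmetic differences.

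The one step you have not actually established is the auxiliary estimate $\int_M(-u)\,\mathrm{dvol}_g\le C'$. You propose to obtain it by pairing the equation with $u-\underline u$ and invoking concavity of $f$, \eqref{filambdai} and a transverse Poincar\'e inequality, but this does not work as described: concavity only yields the differential inequality $F^{ij}(A_{\dag,u})\,g_\dag^{\bar q j}\partial_i\partial_{\bar q}(\underline u-u)\ge -C$ for the \emph{linearized} operator, whose coefficients $F^{ij}$ are at this stage controlled neither from above nor away from degeneracy, so no integration by parts against the fixed metric extracts an a priori $L^1$ or $W^{1,2}$ bound; and the naive pairing of $\Delta_g u\ge -C$ with $-u\ge 0$ gives $\int_M|\nabla u|^2\ge -C\int_M(-u)$, which is vacuous. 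What is actually used in the paper is the elementary fact that $\Gamma\subset\{\sum_i x_i>0\}$ forces the pointwise bound $\Delta_{\mathrm B}u>-C$ (inequality \eqref{deltaulowerbound}); combined with $\sup_Mu=0$ and the representation formula \eqref{greenu} for the nonnegative normalized Green's function of $\Delta_g$, this immediately gives \eqref{l1est}. (Alternatively, as the paper remarks, one can run the weak Harnack inequality in the foliated charts as in Sz\'ekelyhidi's Proposition~10.) With that substitution your argument closes; the rest is correct.
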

\begin{proof}
We denote by $\Delta_g$ the usual Laplacian of the Riemannian metric $g$. It follows from \cite{ek86} that $\Delta_g$ is the extension of $-\Delta_{\mathrm{d}_{\mathrm{B}}}$, i.e., for any   $u\in C_{\mathrm{B}}^\infty(M,\mathbb{R})$, we have
\begin{equation*}
\Delta_gu=\frac{2n\sqrt{-1}\partial_{\mathrm{B}}\overline{\partial}_{\mathrm{B}}u\wedge\omega_\dag^{n-1}\wedge\eta}
{\omega_\dag^n\wedge\eta}
=2\Delta_{\mathrm{B}}u=-\Delta_{\mathrm{d}_{\mathrm{B}}}u.
\end{equation*}
Note that for general $p$ form ($p\geq 1$), the extension of $\Delta_{\mathrm{d}_{\mathrm{B}}}$ is a strongly elliptic second order operator $\tilde\Delta=\Delta_{\mathrm{d}}-\tilde \varsigma$, where $\tilde\varsigma$ is an operator with order no larger than one (see \cite{kt87,kt88}).

Recall that $\Gamma\subset\left\{\mathbf{x}=(x_1,\cdots,x_n)\in\mathbb{R}^n:\,\sum_{i=1}^nx_i>0\right\}$ (see \cite{cnsacta}).  Indeed, if $\mathbf{x}\in \Gamma$, then the symmetry and convexity of $\Gamma$  yields that
$$
\frac{\sum_ix_i}{n} \mathbf{1}\in \Gamma,
$$
where $\mathbf{1}$ is the $n$-tuple with each component $1.$
If there exists a point $\mathbf{x}\in \Gamma$ with $\sum_ix_i\leq 0$,  then we get that the origin
$
\mathbf{0}\in \Gamma
 $
 by the convexity of $\Gamma\supset\Gamma_n$, a contradiction.
This yields that $\mathrm{tr}_{\omega_\dag}h_\dag=\sum_{i=1}^n(A_{\dag,u})_i{}^i>0$, and hence
\begin{equation}
\label{deltaulowerbound}
\Delta_{\mathrm{B}}u=\sum_{i=1}^n(A_{\dag,u})_i{}^i-\frac{n\beta_\dag\wedge\omega_\dag^{n-1}}{\omega_\dag^n}
>-\frac{n\beta_\dag\wedge\omega_\dag^{n-1}}{\omega_\dag^n}\geq-C.
\end{equation}
Suppose that $u(\mathbf{p})=\sup_Mu=0$. We have
\begin{equation}
\label{greenu}
0=u(\mathbf{p})=\frac{1}{\int_M\eta\wedge\omega_\dag^n}\int_Mu(\mathbf{y})\eta(\mathbf{y})\wedge\omega_\dag^n(\mathbf{y})
-\frac{1}{n!}\int_MG(\mathbf{p},\mathbf{y})\Delta_gu(\mathbf{y})\eta(\mathbf{y})\wedge\omega_\dag^n(\mathbf{y}),
\end{equation}
where $G(\mathbf{q},\mathbf{y})$ is Green's function associated to $\Delta_g$, normalized so that
\begin{equation*}
G(\mathbf{q},\mathbf{y})\geq0\quad \text{and}\quad\int_MG(\mathbf{q},\mathbf{y}) \eta(\mathbf{y})\wedge\omega_\dag^n(\mathbf{y})\leq C,\quad \forall\;\mathbf{q}\in M.
\end{equation*}
From \eqref{deltaulowerbound} and \eqref{greenu}, it follows that
\begin{equation}
\label{l1est}
  \int_M(-u)\eta\wedge\omega_\dag^n\leq C.
\end{equation}
It is sufficient to obtain the lower bound of $L:=\inf_Mu<0$ to get \eqref{equ0order}. We assume that the transverse $\mathcal{C}$-subsolution $\underline{u}=0$; otherwise we can change $\beta_\dag$ to get this.
Then by Definition \ref{tcsubsol}, the set
\begin{equation*}
\left(\lambda\left(A_{\dag,0}\right)+\Gamma_n\right) \cap \Gamma^{\psi(\mathbf{x})},\quad
\forall\;\mathbf{x}\in M
\end{equation*}
is uniformly bounded. There exist $\delta>0$ and $R>0$ such that there holds
\begin{equation}
\label{eigenbd}
\left(\lambda\left(A_{\dag,0}\right)-\delta\mathbf{1}+\Gamma_n \right)\cap \Gamma^{\psi(\mathbf{x})}\subset B_R(\mathbf{0}),\quad\forall\;\mathbf{x}\in M.
\end{equation}
We assume that $u$ attains its minimum at the origin of the foliated chart $(-\varepsilon_0,\varepsilon_0)\times B_2(\mathbf{0})$; otherwise we can get this by translation. Let us work in $B_1(\mathbf{0})$. For $\epsilon>0$ sufficiently small, we set $v=u+\epsilon|\mathbf{z}|^2$. We have
\begin{equation*}
\inf_{|\mathbf{w}|\leq 1}v(\mathbf{w})=v(\mathbf{0})= L,\quad v (\mathbf{z})\geq L+\epsilon,\quad \forall\;\mathbf{z}\in \partial B_1(\mathbf{0}).
\end{equation*}
It follows from \cite[Proposition 11]{gaborjdg} that
\begin{equation}
\label{plebesgue}
c_0\epsilon^{2n}\leq \int_P\det(D^2v),
\end{equation}
where the integration is respect to the Lebesgue measure, and the set $P$ is given by
\begin{equation}
 \label{setp}
 P:=\left\{\mathbf{x}\in B_1(\mathbf{0}):|Dv(\mathbf{x})|\leq \frac{\epsilon}{2},\quad v(\mathbf{y})\geq v(\mathbf{x})+Dv(\mathbf{x})\cdot(\mathbf{y}-\mathbf{x}),\quad\forall\, \mathbf{y}\in B_1(\mathbf{0})\right\}.
 \end{equation}
For any $\mathbf{x}\in P$,  we have $|Dv(\mathbf{x})|\leq \frac{\epsilon}{2}$ and $D^2v(\mathbf{x})\geq 0$ which shows $\partial_i\partial_{\overline{j}}u(\mathbf{x})\geq -\epsilon \delta_{ij}$ and that
 \begin{equation}
   \label{blockiinequality}
 \det(D^2v)\leq 2^{2n}\left(\det(\partial_i\partial_{\overline{j}}v)\right)^2
 \end{equation}
 from the argument in \cite{blockiscience}. We choose $\epsilon$ sufficiently small depending only on $\delta$ and $\omega_\dag$ such that
 \begin{equation}
 \label{eigenbd1}
 \lambda\left(A_{\dag,u}\right)\in
 \lambda\left(A_{\dag,0}\right)-\delta\mathbf{1}+\Gamma_n,\quad \forall\;\mathbf{x}\in P.
 \end{equation}
 On the other hand, since $u$ is a solution to \eqref{equ1}, we have
 \begin{equation}
 \label{eigenbd2}
 \lambda\left(A_{\dag,u}\right)\in\partial \Gamma^{\psi(\mathbf{x})},\quad \forall\;\mathbf{x}\in P.
 \end{equation}
 From \eqref{eigenbd}, \eqref{eigenbd1} and \eqref{eigenbd2}, we deduce that
 $|u_{i\bar j}|$ and hence $|v_{i\bar j}|$ is bounded from above at any point $\mathbf{x}\in P$.
 This, together with \eqref{plebesgue} and \eqref{blockiinequality}, yields that
 \begin{equation}
 \label{volpomegalower}
 c_0\epsilon^{2n}\leq C'\mathrm{Vol}_{\omega_\dag}(P).
 \end{equation}
 From \eqref{setp}, we have
 \begin{equation*}
 v(\mathbf{x})<L+\epsilon/2<0,
 \end{equation*}
 where without loss of generality we assume that $L\ll-1$, from which we have
 \begin{equation}
 \label{volpomegaupper}
 \mathrm{Vol}_{\omega_\dag}(P)\leq C''\frac{\int_M(-v)\eta\wedge\omega_\dag^n}{|L+\epsilon/2|}.
 \end{equation}
 Thanks to \eqref{l1est}, \eqref{volpomegalower} and \eqref{volpomegaupper}, we get that $L$ is uniformly bounded from below, as required.

 There is another more local argument for obtaining a bound for $\||u|^p\|_{L^1}$ for some $p>0$ by using the weak Harnack inequality \cite[Theorem 9.22]{gt1998}. Indeed, we assume that $M$ is covered by finite the foliated charts $U_i$'s diffeomorphism to $(-\varepsilon_0,\varepsilon_0)\times B_{2}(\mathbf{0})\subset \mathbb{R}\times \mathbb{C}^n$ such that
$\{\frac{1}{2}U_i\}$ each of which is diffeomorphism to  $(-\varepsilon_0/2,\varepsilon_0/2)\times B_1(\mathbf{0})$ still covers $M.$ We work with the quantities of complex variables in the balls $B_{2}(\mathbf{0})$ and hence the upper bound for $\||u|^p\|_{L^1}$ follows from \eqref{deltaulowerbound} and the argument in the proof of \cite[Proposition 10]{gaborjdg}.
 \end{proof}
 \section{Second Order Estimate}\label{sec2order}
 In this section, we prove the second order estimate of the solution to \eqref{equ1}.
\begin{thm}
\label{thm2order}
Suppose that the function $u\in C_{\mathrm{B}}^\infty(M,\mathbb{R})$ is a solution to \eqref{equ1} with $\sup_Mu=0,$ and that the function $\underline{u}\in C_{\mathrm{B}}^\infty(M,\mathbb{R})$ is a transverse $\mathcal{C}$-subsolution to \eqref{equ1}. There exists a uniform constant $C$ depending only on $\eta,\,\omega_\dag$ and $\underline{u}$ such that
\begin{equation}
\label{equ2order}
\sup_M|\partial_{\mathrm{B}}\overline{\partial}_{\mathrm{B}}u|_{\omega_\dag}\leq CK,
\end{equation}
where $K:=1+|\nabla u|_{\omega_\dag}^2$.
\end{thm}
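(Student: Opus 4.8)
The statement is equivalent to an upper bound $\lambda_1(A_{\dag,u})\le CK$ for the largest eigenvalue: since $\beta_\dag$ is fixed and $\tr_{\omega_\dag}h_\dag>0$ by \eqref{deltaulowerbound} (so $\lambda_1+\cdots+\lambda_n>-C$), such a bound forces $|\lambda_i|\le CK$ for every $i$, hence $|\partial_{\mathrm B}\overline\partial_{\mathrm B}u|_{\omega_\dag}=|h_\dag-\beta_\dag|_{\omega_\dag}\le CK$. On a Sasakian manifold the transverse connection $\nabla^\dag$ is the torsion-free Chern connection of the transverse K\"ahler metric, so the commutation identities \eqref{commutate}--\eqref{ricciidentity} are the K\"ahler ones and every object below is basic; the argument is thus a transverse transcription of the maximum-principle estimates of Tosatti--Weinkove \cite{twjams} and Sz\'ekelyhidi \cite{gaborjdg}. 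We also record the standard lower bound $\sum_k f_k\ge c_0>0$: concavity of $f$ gives $f(t_0\mathbf 1)\le\psi+t_0\sum_k f_k-\sum_k f_k\lambda_k$, so by \eqref{filambdai} and Assumption \eqref{assum3} (with $\sigma=\sup_M\psi<\sup_\Gamma f$) one gets $t_0\sum_k f_k\ge f(t_0\mathbf 1)-\sup_M\psi>0$ for $t_0$ large.

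Set $K:=1+\sup_M|\nabla u|^2_{\omega_\dag}$ and consider the basic function
\[
Q:=\log\lambda_1(A_{\dag,u})+\varphi\bigl(|\nabla u|^2_{\omega_\dag}\bigr)-A\,(u-\underline u),\qquad \varphi(s):=-\tfrac12\log\Bigl(1-\tfrac{s}{2K}\Bigr),
\]
so that $\tfrac1{4K}\le\varphi'\le\tfrac1{2K}$ and $\varphi''=2(\varphi')^2$, with $A\gg1$ to be fixed. Let $p_0$ be a maximum point of $Q$ and take a foliated chart at $p_0$ with $(g_\dag)_{i\bar j}(p_0)=\delta_{ij}$, $\mathrm d(g_\dag)_{i\bar j}(p_0)=0$ and $A_{\dag,u}(p_0)$ diagonal, $\lambda_1\ge\cdots\ge\lambda_n$. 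Since $\lambda_1$ is only Lipschitz where eigenvalues collide, replace $A_{\dag,u}$ near $p_0$ by $A_{\dag,u}-\mathrm{diag}(0,b_2,\dots,b_n)$ with distinct small $b_p>0$; its top eigenvalue $\tilde\lambda_1$ is smooth near $p_0$, equals $\lambda_1$ there, and $\tilde Q:=\log\tilde\lambda_1+\varphi-A(u-\underline u)$ still has a local maximum at $p_0$. Write $F^{i\bar j}:=\partial F/\partial(h_\dag)_{i\bar j}$ (diagonal at $p_0$ with entries $f_k>0$ by Assumption \eqref{assum1}), $F^{i\bar j,p\bar q}$ for its second derivative, and $\mathscr L:=\sum_k f_k\nabla^\dag_k\nabla^\dag_{\bar k}$; at $p_0$ one has $\nabla^\dag_k\tilde Q=0$ for all $k$ and $\mathscr L(\tilde Q)\le 0$.

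The three pieces of $\mathscr L(\tilde Q)$ at $p_0$ are controlled as follows. (i) Differentiating \eqref{equ1} in the $e_1$ and then $\overline e_1$ directions, commuting covariant derivatives by \eqref{ricciidentity} (the curvature errors are $O(\lambda_1\sum_k f_k)+O(1)$), discarding by concavity of $f$ the nonpositive term $-F^{i\bar j,p\bar q}\nabla^\dag_1(h_\dag)_{i\bar j}\nabla^\dag_{\bar1}(h_\dag)_{p\bar q}\ge 0$ after extracting from it the part needed below, and using the second-order eigenvalue formula
\[
\nabla^\dag_k\nabla^\dag_{\bar k}\tilde\lambda_1=\nabla^\dag_k\nabla^\dag_{\bar k}(h_\dag)_{1\bar1}+\sum_{p\ge 2}\frac{|\nabla^\dag_k(h_\dag)_{1\bar p}|^2+|\nabla^\dag_k(h_\dag)_{p\bar1}|^2}{\lambda_1-\lambda_p+b_p},
\]
one obtains $\mathscr L(\log\tilde\lambda_1)\ge-\sum_k f_k|\nabla^\dag_k(h_\dag)_{1\bar1}|^2/\lambda_1^2-C\sum_k f_k-C/\lambda_1+\mathcal E$, where $\mathcal E$ collects the nonnegative concavity and eigenvalue-gap remainders. (ii) A direct computation with \eqref{commutate}--\eqref{ricciidentity}, using that $\sum_k f_k\nabla^\dag_k u_{i\bar k}$ is controlled by the first derivative of \eqref{equ1} and that $|u_{k\bar k}|\ge\lambda_k-C$, gives $\mathscr L(|\nabla u|^2_{\omega_\dag})\ge\sum_k f_k|u_{k\bar k}|^2-CK\sum_k f_k+\mathcal E'$, with $\mathcal E'$ a third-order remainder matching $\mathcal E$. (iii) Since $u$ and $\underline u$ share the same $\beta_\dag$, $(u-\underline u)_{k\bar k}=\lambda_k-\underline\lambda_k$, so $-A\,\mathscr L(u-\underline u)=A\sum_k f_k(\underline\lambda_k-\lambda_k)$; the linear-algebra lemma attached to the transverse $\mathcal C$-subsolution $\underline u$ (Definition \ref{tcsubsol}, as in \cite{gaborjdg}) provides uniform $\kappa,R>0$ so that at $p_0$, once $\lambda_1\ge R$, one is in one of two cases: either $\sum_k f_k(\underline\lambda_k-\lambda_k)\ge\kappa\sum_k f_k$, or the index $j$ with $\lambda_j=\lambda_1$ satisfies $f_j\ge\kappa\sum_k f_k$.

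Feeding (i)--(iii) into $0\ge\mathscr L(\tilde Q)$, multiplying by $\lambda_1$, and using the critical identity $\nabla^\dag_k\tilde\lambda_1/\lambda_1=-\varphi'\nabla^\dag_k|\nabla u|^2_{\omega_\dag}+A(u-\underline u)_k$ to pair the leftover third-order terms against $\varphi''\sum_k f_k|\nabla^\dag_k|\nabla u|^2_{\omega_\dag}|^2$ — this is exactly where $\varphi''=2(\varphi')^2$ and the precise $\varphi$ are used — the totality of third-order contributions ($\mathcal E$, $\mathcal E'$, $-\sum_k f_k|\nabla^\dag_k(h_\dag)_{1\bar1}|^2/\lambda_1^2$ and the $\varphi''$-term) becomes nonnegative; absorbing the factors $CK\varphi'\le C$ via $\varphi'\le\tfrac1{2K}$, one is left in the first case of (iii) with $(\kappa A-C)\,\lambda_1\sum_k f_k\le C$, which (fixing $A$ with $\kappa A>C$ and using $\sum_k f_k\ge c_0$) bounds $\lambda_1(p_0)$ by a geometric constant; and in the second case with $\tfrac{\kappa}{CK}\lambda_1^2\le A\lambda_1+C$, whence $\lambda_1(p_0)\le CK$. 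In either case $\lambda_1(p_0)\le CK$. Finally, for arbitrary $x\in M$, since $\varphi$ is bounded, $|\nabla u|^2_{\omega_\dag}\le K$ and $u-\underline u$ is bounded by Theorem \ref{thm0order} together with the fixed data, $\log\lambda_1(x)\le Q(x)+C\le Q(p_0)+C=\log\lambda_1(p_0)+C\le\log(CK)$, so $\sup_M\lambda_1(A_{\dag,u})\le CK$, which gives \eqref{equ2order}. The crux — the only genuinely nontrivial step — is the third-order bookkeeping and the two-case analysis just invoked: arranging that the concavity term, the eigenvalue-gap terms, the term $-\sum_k f_k|\nabla^\dag_k(h_\dag)_{1\bar1}|^2/\lambda_1^2$ and the $\varphi''$-term cancel, and handling the complementary case of the $\mathcal C$-subsolution lemma; these are the delicate points already present in the K\"ahler and Hermitian treatments, here re-derived for the transverse Chern connection $\nabla^\dag$.
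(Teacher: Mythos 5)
Your overall architecture (perturbed largest eigenvalue $\log\tilde\lambda_1$, the gradient term $\varphi(|\nabla u|^2_{\omega_\dag})$ with $\varphi''=2(\varphi')^2$, the $\mathcal C$-subsolution dichotomy of Proposition \ref{gaborprop5}, and the final two-case conclusion) matches the paper's. But there is a genuine gap in your choice of the third term of the test function: you take the \emph{linear} term $-A(u-\underline u)$, whereas the paper (following Hou--Ma--Wu, Sz\'ekelyhidi and \cite{stw1503}) takes a strictly convex function $\varphi(u)=D_1e^{-D_2u}$. The difference is not cosmetic. The Laplacian of $\log\tilde\lambda_1$ produces the negative term $-\sum_qF^{qq}|\tilde\lambda_{1,q}|^2/\lambda_1^2$, and the only way to control it is to substitute the critical-point identity \eqref{hq} and square; with your ansatz this yields, besides $-2(\rho')^2F^{qq}|V_q|^2$ (absorbed by $\rho''F^{qq}|V_q|^2$), the first-order term $-2A^2F^{qq}|(u-\underline u)_q|^2\ge -CA^2K\,\mathcal F$. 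This is \emph{not} one of your ``third-order contributions'' and nothing in your list absorbs it. In the paper it is absorbed by the positive term $\varphi''F^{qq}|u_q|^2$ coming from the convexity of $\varphi$, via the choices $4\delta\varphi'^2<\tfrac12\varphi''$ in \eqref{deltachoice} and $\tfrac12\varphi''>C_\varepsilon|\varphi'|$ (take $D_2$ large). With a linear term, $\varphi''=0$, and in the alternative \eqref{3.5} of the subsolution dichotomy (where one only has the gain $A\kappa\mathcal F$, with no lower bound on $F^{11}/\mathcal F$) the bad term $-CA^2K\mathcal F$ overwhelms $A\kappa\mathcal F$ as soon as $K$ is large: there is no admissible choice of the uniform constant $A$. (The partial fix of splitting indices into the set $I=\{i:F^{ii}>\delta^{-1}F^{11}\}$ rescues only the indices $q\notin I$, whose contribution is $O(\delta^{-1}F^{11}K)$ and is absorbed by $F^{11}\lambda_1^2/(16K)$; for $q\in I$ the absorption still requires $\varphi''>0$.)

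Two smaller omissions: (1) you never address the paper's \textbf{Case 2} ($\delta\lambda_1\le-\lambda_n$), which is needed because the step converting the concavity term into $\frac{1-2\delta}{\lambda_1}\sum_{q\in I}F^{qq}|\nabla^\dag_1(h_\dag)_{q\bar1}|^2$ (i.e.\ \eqref{deltaprop}--\eqref{fpqrsest}) uses $\lambda_q\ge-\delta\lambda_1$; when this fails one must instead extract the positivity from $\frac{F^{nn}}{4K}|u_{n\bar n}|^2\ge\frac{\delta^2}{8nK}\mathcal F\lambda_1^2$ as in \eqref{case2fqq}. Your blanket claim that the ``totality of third-order contributions becomes nonnegative'' hides exactly this case distinction. (2) At the maximum point where $A_{\dag,u}$ is diagonalized, $(u-\underline u)_{k\bar k}$ equals $\lambda_k$ minus the \emph{diagonal entry} of $h_{\dag,\underline u}$, not $\lambda_k-\underline\lambda_k$; this is harmless (the dichotomy \eqref{3.5} is stated for diagonal entries) but should be said correctly. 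To repair the proof, replace $-A(u-\underline u)$ by $D_1e^{-D_2(u-\underline u)}$ (or normalize $\underline u=0$ by modifying $\beta_\dag$, as the paper does, and use $D_1e^{-D_2u}$), and carry out the $I$/$I^c$ splitting and the two-case analysis of Section \ref{sec2order}.
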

We need some preliminaries from \cite{gaborjdg} (cf. \cite{trudinger1995,guan2014}). For any $\sigma>\sup_{\partial\Gamma}f$, the set $\Gamma^\sigma=\{\lambda\in\Gamma:\;f(\lambda)>\sigma\}$ is open and convex since $f$ is a smooth symmetric concave function, and $\partial\Gamma^\sigma=f^{-1}(\sigma)$ is a smooth hypersurface since $f_i>0$ for $1\leq i\leq n$.
For any $\lambda\in\partial\Gamma^\sigma,$ we denote, by $\mathbf{n}(\lambda),$  the inward pointing unit normal vector, i.e.,
\begin{equation*}
\mathbf{n}(\lambda):=\frac{\nabla f}{|\nabla f|}(\lambda).
\end{equation*}
We also write $\mathcal{F}(\lambda):=\sum_{k=1}^nf_k(\lambda).$ It follows from the Cauchy-Schwarz inequality that $|\nabla f|\leq \mathcal{F}\leq \sqrt{n}|\nabla f|$.

Following \cite{trudinger1995}, we set
\begin{equation*}
\Gamma_\infty:=\left\{(\lambda_1,\cdots,\lambda_{n-1}):\;(\lambda_1,\cdots,\lambda_n)\in\Gamma\;\text{for some}\;\lambda_n\right\}.
\end{equation*}
For any $\mu\in\mathbb{R}^n$, the set $\left(\mu+\Gamma_n\right)\cap \partial\Gamma^{\sigma}$ is bounded, if and only if
\begin{equation}
\label{conbd}
\lim_{t\to+\infty}f(\mu+t\mathbf{e}_i)>\sigma,\quad\forall\;1\leq i\leq n,
\end{equation}
where $\mathbf{e}_i$ denotes the $i^{\mathrm{th}}$ standard basis vector.  This limit is well defined as long as any $(n-1)$ tuple $\mu'$ in $\mu$ satisfies $\mu'\in\Gamma_\infty$, i.e., on the set $\tilde \Gamma$ given by
\begin{equation*}
\tilde\Gamma:=\left\{\mu\in\mathbb{R}:\;\text{there exists}\;t>0\;\text{such that}\;\mu+t\mathbf{e}_i\in\Gamma\;\text{for all}\;i\right\}.
\end{equation*}
It is easy to see that $\Gamma\subset\tilde\Gamma.$
For any $\lambda'=(\lambda_1,\cdots,\lambda_{n-1})\in\Gamma_\infty,$ it follows from the concavity of $f$ that the limit
\begin{equation*}
\lim_{\lambda_n\to+\infty}f(\lambda_1,\cdots,\lambda_n)
\end{equation*}
is either finite for all $\lambda'$ or infinite for all $\lambda'$ (see \cite{trudinger1995}).

If the limit is infinite, then $\left(\mu+\Gamma_n\right)\cap \partial\Gamma^\sigma$ is bounded for all $\sigma$ and $\mu\in\tilde\Gamma$. In particular, any transverse admissible $\underline{u}$ is a transverse $\mathcal{C}$-subsolution; however, a transverse $\mathcal{C}$-subsolution need not be admissible.

If the limit is finite, then we define the function $f_\infty$ on $\Gamma_\infty$ by
\begin{equation}
f_\infty(\lambda_1,\cdots,\lambda_{n-1})=\lim_{t\to+\infty}f(\lambda_1,\cdots,\lambda_{n-1},t).
\end{equation}
In this case, for $\mu\in\tilde\Gamma,$ the set $\left(\mu+\Gamma_n\right)\cap\partial\Gamma^\sigma$ is bounded if and only if $f_\infty(\mu')>\sigma$, where $\mu'\in\Gamma_\infty$ denotes any $(n-1)$ tuple of entries of $\mu.$
\begin{prop}[Sz\'ekelyhidi \cite{gaborjdg}]
\label{gaborprop5}
Given $\delta,\,R>0,$ if $\mu\in\mathbb{R}^n$ such that
\begin{equation}
\label{equgabor22}
\left(\mu-2\delta\mathbf{1}+\Gamma_n\right)\cap \partial\Gamma^\sigma\subset B_R(\mathbf{0}),
\end{equation}
where $B_R(\mathbf{0})\subset\mathbb{R}^n$ is the ball with center $\mathbf{0}$ and radius $R$,
then there exists a constant $\kappa>0$ depending only on $\delta$ and $\mathbf{n}$ on $\partial\Gamma^\sigma$ such that for any $\lambda\in\partial\Gamma^\sigma$ with $|\lambda|>R$, we have either
\begin{equation}
\sum_{j=1}^nf_j(\lambda)(\mu_j-\lambda_j)>\kappa\mathcal{F},
\end{equation}
or
\begin{equation}
f_i(\lambda)>\kappa \mathcal{F}(\lambda),\quad \forall\;1\leq i\leq n.
\end{equation}
\end{prop}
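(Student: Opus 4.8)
The plan is to follow the convexity argument of Sz\'ekelyhidi \cite{gaborjdg}. First I would record the basic geometry of the superlevel set. Since $f$ is concave with $f_i>0$, the set $\Gamma^\sigma$ is open and convex, $\partial\Gamma^\sigma=f^{-1}(\sigma)$ is a smooth hypersurface, and at $\lambda\in\partial\Gamma^\sigma$ the inward unit normal is $\mathbf{n}(\lambda)=\nabla f(\lambda)/|\nabla f(\lambda)|$, whose components are all positive; hence $\sum_j\mathbf{n}_j(\lambda)\ge|\mathbf{n}(\lambda)|=1$, and $|\nabla f|\le\mathcal{F}\le\sqrt{n}\,|\nabla f|$. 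Concavity gives the supporting hyperplane inequality $\langle\mathbf{n}(\lambda),p-\lambda\rangle\ge(f(p)-\sigma)/|\nabla f(\lambda)|$ for $p$ in the domain of $f$, so in particular $\langle\mathbf{n}(\lambda),p-\lambda\rangle\ge0$ for all $p\in\overline{\Gamma^\sigma}$. Finally, because $\sigma>\sup_{\partial\Gamma}f$, a neighbourhood of $\partial\Gamma$ in $\Gamma$ carries $f<\sigma$, so $\overline{\Gamma^\sigma}$ is a closed subset of $\Gamma$ whose topological boundary is exactly $\partial\Gamma^\sigma$.

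Next I would unpack the hypothesis \eqref{equgabor22}. By the equivalence recorded in \eqref{conbd} (applied to $\mu-2\delta\mathbf{1}\in\tilde\Gamma$), boundedness of $(\mu-2\delta\mathbf{1}+\Gamma_n)\cap\partial\Gamma^\sigma$ yields $\lim_{t\to+\infty}f(\mu-2\delta\mathbf{1}+t\mathbf{e}_i)>\sigma$ for every $i$; perturbing slightly into the open cone $C:=\mu-2\delta\mathbf{1}+\Gamma_n$, we see $C$ meets $\Gamma^\sigma$ at points arbitrarily far out along every coordinate direction. On the other hand $C\cap\partial\Gamma^\sigma\subset B_R(\mathbf{0})$, so $C\setminus\overline{B_R(\mathbf{0})}$ is a connected set (here $n\ge 2$) that is disjoint from $\partial\Gamma^\sigma$ and meets the open set $\Gamma^\sigma$; therefore $C\setminus\overline{B_R(\mathbf{0})}\subset\Gamma^\sigma$. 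Passing to the closure gives $\mu-2\delta\mathbf{1}+T\mathbf{e}_i\in\overline{\Gamma^\sigma}$ for every $i$ and every $T\ge T_*:=R+|\mu-2\delta\mathbf{1}|+1$.

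Now fix $T=T_*$ and set $\kappa_0:=\delta/T$, $\kappa:=\delta/(2\sqrt{n}\,T)=\kappa_0/(2\sqrt{n})$; let $\lambda\in\partial\Gamma^\sigma$ with $|\lambda|>R$. If $\mathbf{n}_i(\lambda)\ge\kappa_0$ for all $i$, then $f_i(\lambda)=|\nabla f(\lambda)|\,\mathbf{n}_i(\lambda)\ge\kappa_0|\nabla f(\lambda)|\ge(\kappa_0/\sqrt{n})\mathcal{F}(\lambda)=2\kappa\,\mathcal{F}(\lambda)>\kappa\,\mathcal{F}(\lambda)$ for all $i$, which is the second alternative. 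Otherwise $\mathbf{n}_{i_0}(\lambda)<\kappa_0$ for some $i_0$, and applying the supporting hyperplane inequality with $p=\mu-2\delta\mathbf{1}+T\mathbf{e}_{i_0}\in\overline{\Gamma^\sigma}$,
\begin{equation*}
0\le\langle\mathbf{n}(\lambda),p-\lambda\rangle=\sum_j\mathbf{n}_j(\lambda)(\mu_j-\lambda_j)-2\delta\sum_j\mathbf{n}_j(\lambda)+T\,\mathbf{n}_{i_0}(\lambda)<\sum_j\mathbf{n}_j(\lambda)(\mu_j-\lambda_j)-2\delta+T\kappa_0,
\end{equation*}
whence $\sum_j\mathbf{n}_j(\lambda)(\mu_j-\lambda_j)>2\delta-T\kappa_0=\delta$, and multiplying by $|\nabla f(\lambda)|$,
\begin{equation*}
\sum_j f_j(\lambda)(\mu_j-\lambda_j)=|\nabla f(\lambda)|\sum_j\mathbf{n}_j(\lambda)(\mu_j-\lambda_j)>\delta|\nabla f(\lambda)|\ge\frac{\delta}{\sqrt{n}}\mathcal{F}(\lambda)>\kappa\,\mathcal{F}(\lambda),
\end{equation*}
which is the first alternative.

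The convexity estimates and the final bookkeeping are routine; the step that needs care — and is the main obstacle — is the second paragraph, namely converting the compactness hypothesis \eqref{equgabor22} into usable comparison points $\mu-2\delta\mathbf{1}+T\mathbf{e}_i\in\overline{\Gamma^\sigma}$. This is where one must invoke the equivalence \eqref{conbd}, the standing assumption $\sigma>\sup_{\partial\Gamma}f$ (to pin down the topological boundary of $\overline{\Gamma^\sigma}$), and the connectedness of $C\setminus\overline{B_R(\mathbf{0})}$. With $T_*$ as above the resulting $\kappa$ depends on $\delta$, $R$, $|\mu|$ and $n$; as in \cite{gaborjdg}, in the application these are data attached to the fixed transverse $\mathcal{C}$-subsolution, which is what is meant by the stated dependence.
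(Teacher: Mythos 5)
Your proof is correct, but it does not follow the route the paper takes. The paper's proof is only a sketch of Sz\'ekelyhidi's compactness argument: it introduces the set $A_\delta=\left\{\mathbf{v}\in\Gamma:\ f(\mathbf{v})\le\sigma,\ \mathbf{v}\in\mu-\delta\mathbf{1}+\overline{\Gamma_n}\right\}$, shows it is bounded using \eqref{conbd} and \eqref{equgabor22}, passes to its compact closure, attaches to each $\mathbf{v}\in\overline{A_\delta}$ the cone $\mathcal{C}_{\mathbf{v}}$ of directions entering $(\mu-2\delta\mathbf{1}+\Gamma_n)\cap\Gamma^\sigma$, and extracts $\kappa$ by compactness, so the constant is non-explicit. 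You instead manufacture explicit comparison points $\mu-2\delta\mathbf{1}+T\mathbf{e}_i\in\overline{\Gamma^\sigma}$ (via \eqref{conbd}, the connectedness of the truncated cone $C\setminus\overline{B_R(\mathbf{0})}$, and the observation that $\sigma>\sup_{\partial\Gamma}f$ forces the topological boundary of $\Gamma^\sigma$ to be exactly $f^{-1}(\sigma)$), and then run the dichotomy on the components of $\mathbf{n}(\lambda)$ with the supporting-hyperplane inequality, obtaining $\kappa=\delta/(2\sqrt{n}\,T_*)$ in closed form; I checked the two cases and the constants work. What each approach buys: yours is self-contained and quantitative, and in fact never uses the hypothesis $|\lambda|>R$ (you prove the dichotomy on all of $\partial\Gamma^\sigma$); the price is that your $\kappa$ visibly depends on $R$ and $|\mu|$ rather than ``only on $\delta$ and $\mathbf{n}$'' as the statement asserts, which, as you correctly note, is harmless in the application where $\mu=\lambda(A_{\dag,0}(\mathbf{p}))$ ranges over a compact set (the paper's compactness-based constant has the same hidden dependence through $A_\delta$). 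Both proofs share the implicit assumption that $f(\mu-2\delta\mathbf{1}+t\mathbf{e}_i)$ is defined for large $t$, i.e.\ $\mu-2\delta\mathbf{1}\in\tilde\Gamma$; the paper invokes this without comment as well, so it is not a gap relative to the source.
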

\begin{proof}
See the proof of \cite[Proposition 5]{gaborjdg}. Here we just sketch. We set
\begin{equation}
A_\delta:=\left\{\mathbf{v}\in\Gamma:\;f(\mathbf{v})\leq \sigma,\quad \text{and}\quad \mathbf{v}\in \mu-\delta\mathbf{1}+\overline{\Gamma_n}\right\}.
\end{equation}
Since $f_i>0,\;1\leq i\leq n,$ it follows from the argument above and \eqref{equgabor22} that $$\lim_{t\to+\infty}f(\mu-\delta\mathbf{1}+t\mathbf{e}_i)
\geq\lim_{t\to+\infty}f(\mu-2\delta\mathbf{1}+t\mathbf{e}_i)>\sigma,\quad 1\leq i\leq n,$$
and hence
$A_\delta$ is bounded.

If $\Gamma=\Gamma_n$, then $\mu-2\delta\mathbf{1}\in\Gamma_n$ and hence $A_\delta$ is compact; if $\Gamma_n\varsubsetneqq\Gamma$, then $\mu-2\delta\mathbf{1}\in\mathbb{R}^n\setminus \Gamma^\sigma$ and hence $A_\delta$ is not closed.
For each $\mathbf{v}\in\overline{A_\delta},$ we define the cone $\mathcal{C}_\mathbf{v}$ with vertex at the origin given by
\begin{equation*}
  \mathcal{C}_\mathbf{v} = \left\{ \mathbf{w}\in \mathbb{R}^n\,:\, \mathbf{v} + t\mathbf{w}\in (\mu -
  2\delta\mathbf{1} + \Gamma_n)\cap \Gamma^\sigma\,\text{ for some }
  t > 0\right\}.
  \end{equation*}
Since $\overline{A_\delta}$ is compact, the conclusion follows from applying the argument in the proof of \cite[Proposition 5]{gaborjdg} to any $\lambda\in \overline{A_\delta}$.
\end{proof}
\begin{lem}[Sz\'ekelyhidi \cite{gaborjdg}]
\label{lem9}
Let $f$ be the smooth symmetric function defined on $\Gamma$ satisfying Assumption \eqref{assum1}, \eqref{assum2} and \eqref{assum3} in the introduction. Then for any $\sigma\in(\sup_{\partial\Gamma}f,\sup_\Gamma f)$, we have
\begin{enumerate}
\item \label{gaborlem91}there exists an $N>0$ depending only on $\sigma$ such that $\Gamma+N\mathbf{1}\in \Gamma^\sigma;$
\item \label{gaborlem92}there is a $\tau>0$ depending only on $\sigma$ such that $\mathcal{F}(\lambda)>\tau,\;\forall\,\lambda\in\partial\Gamma^\sigma.$
\end{enumerate}
\end{lem}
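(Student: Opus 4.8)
The plan is to prove both items by exploiting the three structural assumptions on $f$ — concavity, positivity of the $f_i$, and the growth condition \eqref{assum3} — together with the elementary observations already recorded in the excerpt, namely $\Gamma\subset\{\sum_i x_i>0\}$, the inequality $|\nabla f|\le\mathcal F\le\sqrt n\,|\nabla f|$, and the equivalence in \eqref{conbd}. For item \eqref{gaborlem91}, fix $\sigma\in(\sup_{\partial\Gamma}f,\sup_\Gamma f)$. I would first note that by Assumption \eqref{assum3}, for the particular ray through $\mathbf 1\in\Gamma_n\subset\Gamma$ we have $\lim_{t\to+\infty}f(t\mathbf 1)>\sigma$, so there is a $t_0>0$ with $f(t_0\mathbf 1)>\sigma$. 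The point is to upgrade this single ray to a statement valid after translating \emph{all} of $\Gamma$: given $\lambda\in\Gamma$, I want $f(\lambda+N\mathbf 1)>\sigma$ for $N$ large, uniformly in $\lambda$. Since $f_i>0$ the function $t\mapsto f(\lambda+t\mathbf 1)$ is increasing, so it suffices to bound it below. Using concavity of $f$ along the segment from $t_0\mathbf 1$ to $\lambda+N\mathbf 1$ (both of which lie in $\Gamma$ once $N$ is large, because $\Gamma\supset\Gamma_n$ and $\Gamma$ is a cone with vertex at the origin), and the monotonicity $\sum_i f_i\lambda_i\ge 0$ from \eqref{filambdai}, one gets $f(\lambda+N\mathbf 1)\ge f(t_0\mathbf 1)$ provided $\lambda+N\mathbf 1-t_0\mathbf 1\in\overline{\Gamma_n}$, i.e. provided every coordinate of $\lambda+N\mathbf 1$ exceeds $t_0$ — and here I would invoke that a $\mathcal C$-type bound controls $\lambda$ from below, or more simply restrict attention to the set of $\lambda$ that actually occur (those with $\lambda(A_{\dag,u})\in\Gamma$ and $f(\lambda)\le\sigma$), which by \eqref{assum2} and concavity is contained in a half-space and hence, after adding $N\mathbf 1$, lands in $\Gamma^\sigma$. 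This gives the required $N$ depending only on $\sigma$.

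For item \eqref{gaborlem92}, I would argue by contradiction using compactness. Suppose there is a sequence $\lambda^{(k)}\in\partial\Gamma^\sigma$ with $\mathcal F(\lambda^{(k)})\to 0$. By the inequality $|\nabla f|\le\mathcal F$ this forces $|\nabla f(\lambda^{(k)})|\to 0$ as well, so each $f_i(\lambda^{(k)})\to 0$. Now concavity of $f$ gives, for any fixed $\mu\in\Gamma^\sigma$ (which exists and is nonempty since $\sigma<\sup_\Gamma f$), the inequality
\begin{equation*}
f(\mu)\le f(\lambda^{(k)})+\sum_{i=1}^n f_i(\lambda^{(k)})(\mu_i-\lambda^{(k)}_i)=\sigma+\sum_{i=1}^n f_i(\lambda^{(k)})\mu_i-\sum_{i=1}^n f_i(\lambda^{(k)})\lambda^{(k)}_i.
\end{equation*}
The middle term tends to $0$ because the $f_i(\lambda^{(k)})\to0$ and $\mu$ is fixed; the last term is $\ge 0$ by \eqref{filambdai}, so it can only help the inequality. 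Hence $f(\mu)\le\sigma$ in the limit, contradicting $\mu\in\Gamma^\sigma$, i.e. $f(\mu)>\sigma$. Therefore $\mathcal F$ is bounded below on $\partial\Gamma^\sigma$ by some $\tau>0$, and by construction $\tau$ depends only on $\sigma$ (through the choice of the fixed reference point $\mu$ and the gap $f(\mu)-\sigma$).

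The main obstacle I anticipate is making the uniformity in item \eqref{gaborlem91} genuinely clean: the translate-everything statement "$\Gamma+N\mathbf 1\subset\Gamma^\sigma$" is a statement about the \emph{whole} cone $\Gamma$, not just about the bounded region where solutions live, so one cannot simply restrict to a compact set. The honest route is to use that $\Gamma^\sigma$ is convex with $\Gamma_n$ contained in its "recession cone" — precisely what \eqref{conbd} and the finiteness/infiniteness dichotomy for $\lim_{\lambda_n\to\infty}f$ discussed just above the lemma encode — so that adding a large multiple of $\mathbf 1$ pushes any point of $\Gamma$ into $\Gamma^\sigma$. Pinning down that the required $N$ depends only on $\sigma$ (and not on how far out in $\Gamma$ one starts) is the step that needs the growth hypothesis \eqref{assum3} in an essential way, and I would organize the write-up so that \eqref{assum3} is applied exactly once, to the ray through $\mathbf 1$, and then propagated by convexity and the sign condition \eqref{filambdai}.
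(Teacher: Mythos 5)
Your treatment of part \eqref{gaborlem92} is correct, and it is in fact a slightly different (and self-contained) route: where the paper tests concavity at $\lambda\in\partial\Gamma^\sigma$ against the moving point $\lambda+\alpha N\mathbf{1}$, whose value is controlled by part \eqref{gaborlem91}, you test against a \emph{fixed} reference point $\mu\in\Gamma^\sigma$ and discard the term $-\sum_i f_i(\lambda)\lambda_i$ using \eqref{filambdai}. This decouples part \eqref{gaborlem92} from part \eqref{gaborlem91}; taking $\mu=t_0\mathbf{1}$ even yields the explicit constant $\tau=(f(t_0\mathbf{1})-\sigma)/t_0$, essentially the same constant the paper obtains.

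Part \eqref{gaborlem91}, however, has a genuine gap. The only mechanism you actually write down for $f(\lambda+N\mathbf{1})>\sigma$ is coordinatewise monotonicity from $t_0\mathbf{1}$, which requires every coordinate of $\lambda+N\mathbf{1}$ to exceed $t_0$; this fails for general $\lambda\in\Gamma$, since unless $\Gamma=\Gamma_n$ the cone contains points with arbitrarily negative individual coordinates. Neither of your proposed repairs closes this: restricting to ``the $\lambda$ that actually occur'' proves a weaker statement than the lemma, which is about the whole cone and is applied to the unbounded set $\partial\Gamma^\sigma$ (in the proof of part \eqref{gaborlem92}) and to rescaled eigenvalue vectors in the blow-up argument of Section \ref{sec1st}; and the recession-cone remark only gives $\Gamma^\sigma+\Gamma_n\subset\Gamma^\sigma$, which says nothing about points of $\Gamma\setminus\Gamma^\sigma$. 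The missing step is the limiting convexity argument of \cite{gaborjdg}: writing $N\mathbf{1}+\lambda=\left(1-\tfrac{1}{s}\right)N\mathbf{1}+\tfrac{1}{s}\left(N\mathbf{1}+s\lambda\right)$, concavity and $f_i>0$ give
\begin{equation*}
f(N\mathbf{1}+\lambda)\;\geq\;\left(1-\tfrac{1}{s}\right)f(N\mathbf{1})+\tfrac{1}{s}f(N\mathbf{1}+s\lambda)\;\geq\;\left(1-\tfrac{1}{s}\right)f(N\mathbf{1})+\tfrac{1}{s}f(s\lambda),
\end{equation*}
and since $s\mapsto f(s\lambda)$ is nondecreasing by \eqref{filambdai} one has $\liminf_{s\to\infty}\tfrac{1}{s}f(s\lambda)\geq 0$, whence $f(N\mathbf{1}+\lambda)\geq f(N\mathbf{1})>\sigma$. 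This is where the uniformity of $N$ over all of $\Gamma$ actually comes from, and it is absent from your write-up.
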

\begin{proof}
See the proof of \cite[Lemma 9]{gaborjdg}. Here we just give some details for Part \eqref{gaborlem92}.
We choose $\alpha>1$ such that $\sup_\Gamma f>\sigma':=f(\alpha N\mathbf{1})>f( N\mathbf{1})$. Then the arguments in Part \eqref{gaborlem91} yields that for any $\mathbf{x}\in\Gamma$, there holds $\mathbf{x}+\alpha N\mathbf{1}\in\Gamma^{\sigma'}.$ In particular, we have
$$
f(\lambda+\alpha N\mathbf{1})>\sigma',\quad \forall\;\lambda\in\partial\Gamma^\sigma.
$$
This, together with the concavity of $f$, yields that, for any $\lambda\in\partial\Gamma^\sigma,$ there holds
\begin{equation*}
\sigma'<f(\lambda + \alpha N\mathbf{1}) \leq f(\lambda)+\nabla f(\lambda)\cdot \alpha N\mathbf{1}=f(\lambda) + \alpha N\sum_{i=1}^n
   f_i(\lambda)=\sigma+ \alpha N\mathcal{F}(\lambda) ,
\end{equation*}
which implies $\mathcal{F}(\lambda) \geq (\alpha N)^{-1}(\sigma' -\sigma)>0$, as required.
\end{proof}
Let us recall some basic formulae for the derivatives of eigenvalues (see for example \cite{spruck}).
\begin{lem}[Spruck \cite{spruck}]
The first and second order derivatives of the eigenvalue $\lambda_i$ at a diagonal matrix $((A_\dag)_{i}{}^j)$ $($consider it as a Hermitian matrix$)$ with distinct eigenvalue are
\begin{align}
\label{eigenvalue1st}\lambda_{i}^{pq}=&\delta_{pi}\delta_{qi},\\
\label{eigenvalue2nd}\lambda_{i}^{pq,rs}=&(1-\delta_{ip})\frac{\delta_{iq}\delta_{ir}\delta_{ps}}{\lambda_i-\lambda_p}
+(1-\delta_{ir})\frac{\delta_{is}\delta_{ip}\delta_{rq}}{\lambda_i-\lambda_r},
\end{align}
where
$$
\lambda_{i}^{pq}=\frac{\partial\lambda_i}{\partial  (A_\dag)_{p}{}^q}, \quad
\lambda_{i}^{pq,rs}=\frac{\partial^2\lambda_i}{\partial  (A_\dag)_{p}{}^q\partial  (A_\dag)_{r}{}^s}.
$$
\end{lem}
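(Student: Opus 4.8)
This lemma is a statement of pure linear algebra, so my plan is to drop the Sasakian setting entirely and argue by classical perturbation theory of a simple eigenvalue. I would fix the diagonal matrix $A_0=((A_\dag)_i{}^j)$ with distinct entries $\lambda_1,\dots,\lambda_n$, pick an arbitrary Hermitian matrix $B=(B_{pq})$, and differentiate along the affine curve $A(t)=A_0+tB$, reading off $\lambda_i^{pq}$ and $\lambda_i^{pq,rs}$ from $\dot\lambda_i=\sum_{p,q}\lambda_i^{pq}B_{pq}$ and $\ddot\lambda_i=\sum_{p,q,r,s}\lambda_i^{pq,rs}B_{pq}B_{rs}$ (dots being $t$-derivatives at $0$, and $\lambda_i^{pq,rs}$ taken symmetric under $(pq)\leftrightarrow(rs)$). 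The first thing I would record is that, since $\lambda_i$ is a \emph{simple} eigenvalue of $A_0$, the implicit function theorem applied to the characteristic polynomial together with the eigenvector equation (equivalently, the spectral projection $\frac{1}{2\pi\sqrt{-1}}\oint_{\gamma_i}(\zeta I-A)^{-1}\,\mathrm{d}\zeta$) yields smooth curves $t\mapsto\lambda_i(t)$, $t\mapsto v_i(t)$ near $t=0$ with $A(t)v_i(t)=\lambda_i(t)v_i(t)$, $\langle v_i(t),v_i(t)\rangle=1$, $\lambda_i(0)=\lambda_i$, $v_i(0)=\mathbf e_i$, where $\langle\cdot,\cdot\rangle$ is the standard Hermitian inner product and $\mathbf e_i$ the $i$-th coordinate vector; I would also normalise the phase of $v_i$ so that $\langle\dot v_i(0),\mathbf e_i\rangle=0$, which leaves $\lambda_i(t)$ unchanged.

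For \eqref{eigenvalue1st} I would differentiate $Av_i=\lambda_iv_i$ once and pair with $\mathbf e_i$: self-adjointness of $A_0$ together with $A_0\mathbf e_i=\lambda_i\mathbf e_i$ makes $\langle A_0\dot v_i,\mathbf e_i\rangle=\lambda_i\langle\dot v_i,\mathbf e_i\rangle$ cancel the matching term on the right, and with $\langle\dot v_i,\mathbf e_i\rangle=0$ what survives is $\dot\lambda_i=\langle B\mathbf e_i,\mathbf e_i\rangle=B_{ii}$, i.e. $\lambda_i^{pq}=\delta_{pi}\delta_{qi}$. Pairing the same differentiated equation with $\mathbf e_k$ for $k\neq i$ then gives the off-diagonal components of the eigenvector variation, $B_{ki}+\lambda_k\langle\dot v_i,\mathbf e_k\rangle=\lambda_i\langle\dot v_i,\mathbf e_k\rangle$, hence $\dot v_i=\sum_{k\neq i}\frac{B_{ki}}{\lambda_i-\lambda_k}\mathbf e_k$ (the $\mathbf e_i$-component vanishing by the phase choice), which I will need next.

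For \eqref{eigenvalue2nd} I would differentiate $Av_i=\lambda_iv_i$ a second time; since $\ddot A=0$ along the curve, pairing with $\mathbf e_i$ and again using self-adjointness of $A_0$ and $\langle\dot v_i,\mathbf e_i\rangle=0$ annihilates the $A_0\ddot v_i$, $\dot\lambda_i\dot v_i$ and $\lambda_i\ddot v_i$ contributions and leaves $\ddot\lambda_i=2\langle B\dot v_i,\mathbf e_i\rangle$; substituting the formula for $\dot v_i$ gives $\ddot\lambda_i=2\sum_{k\neq i}\frac{B_{ik}B_{ki}}{\lambda_i-\lambda_k}$. The last step is to identify, from this quadratic form in $B$, the symmetric $4$-tensor $\lambda_i^{pq,rs}$: its only nonzero coefficients pair $B_{ik}$ with $B_{ki}$ ($k\neq i$), each with weight $(\lambda_i-\lambda_k)^{-1}$, and distributing that weight evenly between the ordered index pairs $((i,k),(k,i))$ and $((k,i),(i,k))$ is exactly what produces the two $\delta$-terms of \eqref{eigenvalue2nd}. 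I do not expect any real obstacle: the distinctness hypothesis is used only to guarantee the smooth dependence of $\lambda_i$ and $v_i$ on $A$, and the single mildly fiddly point is the bookkeeping in this final passage from the quadratic form $\ddot\lambda_i$ to $\lambda_i^{pq,rs}$ — a cross-check being available by instead differentiating the resolvent identity $\partial_{A_{pq}}(\zeta I-A)^{-1}=(\zeta I-A)^{-1}E_{pq}(\zeta I-A)^{-1}$ (with $E_{pq}$ the matrix unit) twice and evaluating residues along $\gamma_i$ at $A=A_0$, which reproduces the same two formulae.
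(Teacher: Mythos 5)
Your derivation is correct: the paper itself gives no proof of this lemma (it is quoted directly from Spruck's survey), and your first/second variation of the eigenvalue equation along $A_0+tB$, with the phase normalisation $\langle\dot v_i,\mathbf e_i\rangle=0$ and the resulting $\dot v_i=\sum_{k\neq i}\frac{B_{ki}}{\lambda_i-\lambda_k}\mathbf e_k$, is exactly the standard argument behind the cited formulae. The final bookkeeping also checks out: the symmetrised tensor in \eqref{eigenvalue2nd} reproduces $\ddot\lambda_i=2\sum_{k\neq i}\frac{B_{ik}B_{ki}}{\lambda_i-\lambda_k}$, so nothing is missing.
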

\begin{lem}[Gerhardt \cite{gerhardt}]
\label{lemgerhardt}
If $F(A_\dag)=f(\lambda_1,\cdots, \lambda_n)$ in terms of a smooth symmetric function of the eigenvalues, then at a diagonal matrix $((A_\dag)_{i}{}^j)$ $($consider it as a Hermitian matrix$)$ with distinct eigenvalues there hold
\begin{align}
\label{f1stdaoshu}F^{ij}=&\delta_{ij} f_i,\\
\label{f2nddaoshu}F^{ij,rs}=&f_{ir}\delta_{ij}\delta_{rs}+\frac{f_i-f_j}{\lambda_i-\lambda_j}(1-\delta_{ij})\delta_{is}\delta_{jr},
\end{align}
where
$$
F^{ij}=\frac{\partial F}{\partial  (A_\dag)_{i}{}^j}, \quad F^{pq,rs}=\frac{\partial^2F}{\partial  (A_\dag)_{i}{}^j\partial  (A_\dag)_{r}{}^s}.
$$
\end{lem}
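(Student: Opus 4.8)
The plan is to derive both identities from the chain rule applied to the composition $F(A_\dag)=f(\lambda_1(A_\dag),\dots,\lambda_n(A_\dag))$, feeding in the eigenvalue derivative formulas \eqref{eigenvalue1st}--\eqref{eigenvalue2nd} of the preceding lemma. Near a Hermitian matrix with distinct eigenvalues each $\lambda_k$ is a smooth function of the entries $(A_\dag)_i{}^j$, so $F$ is smooth there and the chain rule is legitimate. First I would differentiate once,
\begin{equation*}
F^{ij}=\sum_{k=1}^n f_k\,\lambda_k^{ij},
\end{equation*}
and then specialize to the diagonal matrix in the statement, where \eqref{eigenvalue1st} gives $\lambda_k^{ij}=\delta_{ki}\delta_{kj}$; this immediately yields $F^{ij}=f_i\,\delta_{ij}$, which is \eqref{f1stdaoshu}.

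For \eqref{f2nddaoshu} I would differentiate a second time, obtaining
\begin{equation*}
F^{ij,rs}=\sum_{k,\ell=1}^n f_{k\ell}\,\lambda_k^{ij}\,\lambda_\ell^{rs}+\sum_{k=1}^n f_k\,\lambda_k^{ij,rs},
\end{equation*}
and again evaluate at the diagonal matrix. By \eqref{eigenvalue1st} the first sum collapses to $\sum_{k,\ell}f_{k\ell}\,\delta_{ki}\delta_{kj}\delta_{\ell r}\delta_{\ell s}=f_{ir}\,\delta_{ij}\delta_{rs}$, the ``diagonal'' part of the answer. For the remaining sum I would substitute \eqref{eigenvalue2nd} with the eigenvalue label $k$ playing the role of the distinguished index in that formula; in the two resulting terms the products of Kronecker deltas force, respectively, $k=j=r$ together with $i=s$ and $k\ne i$, and $k=i=s$ together with $r=j$ and $k\ne r$, so that
\begin{equation*}
\sum_{k=1}^n f_k\,\lambda_k^{ij,rs}
=(1-\delta_{ij})\,\delta_{is}\,\delta_{jr}\left(\frac{f_j}{\lambda_j-\lambda_i}+\frac{f_i}{\lambda_i-\lambda_j}\right)
=\frac{f_i-f_j}{\lambda_i-\lambda_j}\,(1-\delta_{ij})\,\delta_{is}\,\delta_{jr}.
\end{equation*}
Adding the two pieces reproduces exactly \eqref{f2nddaoshu}.

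There is no substantive analytic obstacle here: the whole content lies in the bookkeeping of the last step, where one must keep the eigenvalue indices and the matrix-entry indices scrupulously separate when inserting \eqref{eigenvalue2nd} into the sum over $k$, and must remember that these formulas, like \eqref{eigenvalue1st}--\eqref{eigenvalue2nd} themselves, are asserted only at a diagonal matrix with simple spectrum. That hypothesis is exactly the setting in which the lemma is used later, since in the second order estimate the endomorphism $A_{\dag,u}$ will always be diagonalized at the point under consideration; the only inputs beyond the preceding lemma are the smoothness and symmetry of $f$.
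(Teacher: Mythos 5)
Your computation is correct: the chain rule combined with the Spruck formulas \eqref{eigenvalue1st}--\eqref{eigenvalue2nd} gives exactly \eqref{f1stdaoshu} and \eqref{f2nddaoshu}, and your bookkeeping in the last step (the two delta-products forcing $k=j=r$, $i=s$ and $k=i=s$, $r=j$ respectively, combining to $\tfrac{f_i-f_j}{\lambda_i-\lambda_j}$) checks out. The paper itself gives no proof, simply citing Gerhardt, but your argument is the standard derivation that the preceding lemma on eigenvalue derivatives is set up to feed into.
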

These formulae make sense even if the eigenvalues are not distinct. Indeed, if $f$ is smooth and symmetric, then $f$ is a smooth function of elementary symmetric polynomials which are smooth on the space of matrices by Vieta's formulas and hence $F$ is a smooth function on the space of matrices.
In particular, we have $f_i\longrightarrow f_j$ as $\lambda_i\longrightarrow \lambda_j$. If $f$ is concave and symmetric, then we have that $\frac{f_i- f_j}{\lambda_i-\lambda_j}\leq 0$ (see \cite[Lemma 2]{eh89} or \cite{spruck}). In particular, if $\lambda_i\leq \lambda_j$, then we have $f_i\geq f_j$. Also it follows that
\begin{equation}
\label{cabor67}
F^{ij,rs}x_{i\overline{j}} x_{r\overline{s}}
= f_{ij}x_{i\overline{i}} x_{j\overline{j}}
+\sum\limits_{p\neq q}\frac{f_p- f_q}{\lambda_p-\lambda_q}|x_{p\overline{q}}|^2
\leq f_{ij} x_{i\overline{i}}x_{j\overline{j}}
+\sum\limits_{p>1}\frac{f_1- f_p}{\lambda_1-\lambda_p}|x_{p\overline{1}}|^2.
\end{equation}
For any function $u\in C_{\mathrm{B}}^\infty(M,\mathbb{R})$, we define an elliptic operator $L$ (linearized operator of $F$ given in \eqref{equ1}) by
\begin{equation}
\label{defnl}
L(u)=F^{ij}(g_\dag)^{\overline{q}j}\partial_i\partial_{\overline{q}}u,
\end{equation}
in the foliated local coordinate patch $\left((-\varepsilon_0,\varepsilon_0)\times B_2(\mathbf{0});x,\mathbf{z}=(z^1,\cdots,z^n)\right)$.
\begin{proof}
[Proof of Theorem \ref{thm2order}]
We can assume that the transverse subsolution $\underline{u}=0$ since otherwise we can  modify the background basic form $\beta_\dag$. This means that for any $\mathbf{p}\in M$ the sets $(\lambda(A_{\dag,0}(\mathbf{p}))+\Gamma_n)\cap \partial \Gamma^{\psi(\mathbf{p})}$ are bounded, where we recall that $A_{\dag,0}$ is the  transverse Hermitian endomorphism determined by $\beta_{\dag,0}.$
Since $M$ is compact, there exist uniform $\delta,\,R>0$ such that at each point $\mathbf{p}$ there holds
\begin{equation}
(\lambda(A_{\dag,0})-2\delta\mathbf{1}+\Gamma_n)\cap \partial\Gamma^{\psi(\mathbf{p})}\subset B_R(\mathbf{0}).
\end{equation}
By the argument in the proof \cite[Proposition 6]{gaborjdg} (based on Proposition \ref{gaborprop5}), we can find a uniform constant $\kappa>0$ such that at any point $\mathbf{p}\in M$, if $|\lambda(A_{\dag,u})|>R$ and $A_{\dag,u}=\mathrm{diag}\{\lambda_1,\cdots,\lambda_n\}$ with $\lambda_1\geq \cdots\geq\lambda_n$, then there holds either
\begin{equation}
\label{3.4}
  F^{ii}(A_{\dag,u})>\kappa\sum_qF^{qq}(A_{\dag,u})\quad\text{for all}\;i,
\end{equation}
or
\begin{equation}
\label{3.5}
  \sum_q F^{qq}(A_{\dag,u})\left((A_{\dag,0})_q{}^q-\lambda_q\right)>\kappa\sum_qF^{qq}(A_{\dag,u}).
\end{equation}
From Lemma \ref{lem9}, it follows that there exists a uniform constant $\tau>0$ such that
\begin{equation}
\label{fxiajie}
 \sum_qF^{qq}(A_{\dag,u}(\mathbf{p}))>\tau,\quad\forall\;\mathbf{p}\in M.
\end{equation}
Since $\mathrm{tr}_{\omega_\dag}h_\dag=\sum_{i=1}^n\lambda_i>0$, it is sufficient to get the upper bound of $\lambda_1$ to complete the proof.

Our background is a counterpart to the K\"ahler case in \cite{gaborjdg}, and we use the method revised from \cite{stw1503} (see also \cite{cw01,ctwjems,houmawu,gaborjdg}), where another auxiliary function $\varphi(t)=D_1e^{-D_2t}$ is introduced. Since there are no terms about the first order derivatives of $u$ in $(h_\dag)_{i\overline{j}},$ and the adapted Chern connection is torsion free, we need not estimate as many terms as done in \cite{stw1503} (see also \cite{gaborjdg}).
Actually, we consider the function
\begin{equation*}
H=\log\lambda_1+\rho\left(|\nabla u|^2_{\omega_\dag}\right)+\varphi(u),
\end{equation*}
where
\begin{equation*}
\rho(t)=-\frac{1}{2}\log\left(1-\frac{t}{2K}\right),\quad\varphi(t)=D_1e^{-D_2t},
\end{equation*}
with sufficiently large uniform constants $D_1,D_2>0$ to be determined later.
A direct calculation yields that
\begin{equation*}
\rho\left(|\nabla u|^2_{\omega_\dag}\right)\in[0,\,2\log 2]
\end{equation*}
and
\begin{equation*}
\frac{1}{4K}<\rho'<\frac{1}{2K},\quad \rho''=2(\rho')^2.
\end{equation*}
We work at a point $\mathbf{x}_0$ where $H$ achieves its maximum.  After translating and orthonormal transforms, we can assume that $\mathbf{x}_0$ is in a foliated local coordinate patch such that $\mathbf{x}_0$ is the origin, $\omega_\dag$ is a unit matrix and $h_\dag$ is diagonal with $\lambda_1=(h_\dag)_{1\overline{1}}\geq\cdots\geq (h_\dag)_{n\overline{n}}$. To make sure that $H$ is smooth at this point, we fix a diagonal matrix $B$ with $B_{1}{}^1=0,\; 0<B_{2}{}^2<\cdots<B_{n}{}^n$, and define
$\tilde A=A_{\dag,u}-B$ with eigenvalues denoted by $\tilde\lambda_1,\cdots,\tilde\lambda_{n}$. Clearly, at this point  $\mathbf{x}_0$, there hold
\begin{equation*}
\tilde\lambda_1=\lambda_1,\quad \tilde\lambda_i=\lambda_i-B_{i}{}^{i},\quad i=2,\cdots,n
\end{equation*}
and
$
\tilde\lambda_1>\cdots>\tilde\lambda_n.
$
Since we assume
$
\sum\limits_{i=1}^n\lambda_i >0,
$
we can choose  $B$  small enough such that
$$
\sum\limits_{i=1}^n\tilde\lambda_i>-1
$$
and
\begin{equation}\label{6.18}
  \sum\limits_{p>1}\frac{1}{\lambda_1-\tilde\lambda_p}\leq C
\end{equation}
where $C$ is a fixed constant depending only on $n$. We give some remarks about $B$. It can also be considered as a section in $\Gamma_{\mathrm{B}}\left(\nu(\mathcal{F}_\xi)\otimes \left(\lambda_{\mathrm{B}}\otimes_{\mathbb{R}}\mathbb{C}\right)\right)$ which is represented by a constant diagonal matrix $(B_{i}{}^j)$ in these foliated local coordinates. Hence, we get
\begin{align*}
\nabla^{\dag}_{\overline{j}}B_{r}{}^s=&\partial_{\overline{j}}B_{r}{}^s=0,\quad \nabla^{\dag}_{i}\nabla^{\dag}_{\overline{j}}B_{r}{}^s=0,\\ \nabla^{\dag}_{i}B_{r}{}^s=&\partial_{i}B_{r}{}^s-\Gamma_{ir}^pB_{p}{}^s+\Gamma_{ip}^sB_{r}{}^p=\Gamma_{ir}^s\left(B_{r}{}^r-B_{s}{}^s\right),\\
\nabla^{\dag}_{\overline{j}}\nabla^{\dag}_{i}B_{r}{}^s=&\partial_{\overline{j}}\nabla^{\dag}_{i}B_{r}{}^s=R_{i\overline{j}r}{}^s\left(B_{s}{}^s-B_{r}{}^r\right).
\end{align*}
Here and hereafter, let $\nabla^\dag$ denote the adapted Chern connection of $\omega_\dag$, and we still use the notations $\nabla^\dag_{i}:=\nabla^\dag_{\partial_i},\,\nabla^\dag_{\bar j}:=\nabla^\dag_{\partial_{\bar j}}.$

Now consider the quantity
\begin{equation*}
\tilde H=\log\tilde\lambda_1+\rho\left(|\nabla u|^2_{\omega_\dag}\right)+\varphi(u),
\end{equation*}
which is smooth in this foliated chart and attains its maximum at the point $\mathbf{x}_0$. All the following calculation is at this point.
We may assume $\lambda_1\gg K>1$. We use subscripts $k$ and $\overline{\ell}$ to denote the partial derivatives $\partial/\partial z_k$ and $\partial/\partial \overline{z}_{\ell}$. At the point $\mathbf{x}_0$, we have
\begin{align}
\label{hq}\tilde H_q=&\frac{\tilde{\lambda}_{1,q}}{\lambda_1} + \rho'V_q  +
\varphi' u_q=0, \quad \textrm{for } V_q : = u_ru_{\overline{r}q} +u_{\overline{r}}\nabla^{\dag}_qu_{r}.\\
\label{hqq}\tilde{H}_{q\overline{q}}=& \frac{\tilde{\lambda}_{1,q\overline{q}}}{\lambda_1} -
\frac{|\tilde{\lambda}_{1,q}|^2}{\lambda_1^2}
+ \rho'\Big( u_r\nabla^{\dag}_{\overline{q}}u_{\overline{r}q}+u_{\overline{r}}\nabla^{\dag}_{\overline{q}}\nabla^{\dag}_{q}u_r  +
|\nabla^{\dag}_{q}u_{r}|^2 + |u_{\overline{r}q}|^2 \Big)  \\
& + \rho''|V_q|^2 + \varphi''|u_q|^2 + \varphi' u_{q\overline{q}}. \nonumber
\end{align}
From \eqref{eigenvalue1st}, we get
\begin{align}\label{lambda1p}
\tilde\lambda_{1,p}
= \tilde\lambda_{1}^{rs}\left((g_\dag)^{\overline{j}s}\nabla^{\dag}_{p}(h_\dag)_{r\overline{j}}-\nabla^{\dag}_pB_{r}{}^s\right)
=\nabla^{\dag}_{p}(h_\dag)_{1\overline{1}}-\nabla^{\dag}_pB_{1}{}^1=\nabla^{\dag}_{p}(h_\dag)_{1\overline{1}},
\end{align}
where we use the fact that $\nabla^{\dag}_pB_{1}{}^1=0$. Then using this formula and \eqref{eigenvalue1st} and \eqref{eigenvalue2nd}, we can deduce
\begin{align*}
\tilde\lambda_{1,p\overline{q}}
=&\tilde\lambda_{1}^{rs,ab}\left((g_\dag)^{\overline{j}s} \nabla^{\dag}_{p}(h_\dag)_{r\overline{j}}-\nabla^{\dag}_{p}B_{r}{}^s\right) \left((g_\dag)^{\overline{\ell}b} \nabla^{\dag}_{\overline{q}}(h_\dag)_{a\overline{\ell}}-\nabla^{\dag}_{\overline{q}}B_{a}{}^b\right)\\
&+\tilde\lambda_{1}^{rs}\left((g_\dag)^{\overline{j}s}\nabla^{\dag}_{\overline{q}}\nabla^{\dag}_{p}(h_\dag)_{r\overline{j}}
-\nabla^{\dag}_{\overline{q}}\nabla^{\dag}_{p}B_{r}{}^s \right)  \\
=&\tilde\lambda_{1}^{rs,ab}\left(\nabla^{\dag}_{p}(h_\dag)_{r\overline{s}}-\nabla^{\dag}_{p}B_{r}{}^s\right) \left(\nabla^{\dag}_{\overline{q}}(h_\dag)_{a\overline{b}}\right)
+\tilde\lambda_{1}^{rs}\left(\nabla^{\dag}_{\overline{q}}\nabla^{\dag}_{p}(h_\dag)_{r\overline{s}}
-\nabla^{\dag}_{\overline{q}}\nabla^{\dag}_{p}B_{r}{}^s \right)\nonumber  \\
=&\sum\limits_{r\neq 1}\frac{1}{\lambda_1-\tilde\lambda_{r}}\left(\nabla^{\dag}_{p}(h_\dag)_{r\overline{1}} -\nabla^{\dag}_{p}B_{r}{}^1\right)\left(\nabla^{\dag}_{\overline{q}}(h_\dag)_{1\overline{r}}\right)\nonumber\\
&+\sum\limits_{a\neq 1}\frac{1}{\lambda_1-\tilde\lambda_{a}}\left(\nabla^{\dag}_{p}(h_\dag)_{1\overline{a}}
-\nabla^{\dag}_{p}B_{1}{}^a\right)\left(\nabla^{\dag}_{\overline{q}}(h_\dag)_{a\overline{1}}\right)
 +\nabla^{\dag}_{\overline{q}}\nabla^{\dag}_{p}(h_\dag)_{1\overline{1}},      \nonumber
\end{align*}
where we also use the fact that $\nabla^{\dag}_{\overline{q}}B_{a}{}^b=\nabla^{\dag}_{\overline{q}}\nabla^{\dag}_{p}B_{1}{}^1=0$. In particular, we have
\begin{align}\label{lambda1qq}
\tilde\lambda_{1,q\overline{q}}=&\nabla^{\dag}_{\overline{q}}\nabla^{\dag}_{q}(h_\dag)_{1\overline{1}}+ \sum\limits_{r> 1}\frac{|\nabla^{\dag}_{q}(h_\dag)_{r\overline{1}} |^2+|\nabla^{\dag}_{q}(h_\dag)_{1\overline{r}}|^2}{\lambda_1-\tilde\lambda_{r}}\\
 &-\sum\limits_{r> 1}\frac{\left(\nabla^{\dag}_{q}B_{r}{}^1\right)\left(\nabla^{\dag}_{\overline{q}}(h_\dag)_{1\overline{r}}\right)
 +\left(\nabla^{\dag}_{q}B_{1}{}^r\right)\left(\nabla^{\dag}_{\overline{q}}(h_\dag)_{r\overline{1}}\right)}{\lambda_1-\tilde\lambda_{r}}.\nonumber
\end{align}
Recall that our choice of $B$ yields that $\sum_i\tilde \lambda_i>-1$, which means that $$(\lambda_1-\tilde\lambda_r)^{-1}\geq (n\lambda_1+1)^{-1}$$ for $r>1$. This, together with \eqref{lambda1qq} and Young's inequality, shows that
\begin{align}
\label{lambda1qqest}
\tilde\lambda_{1,q\overline{q}}\geq&\nabla^{\dag}_{\overline{q}}\nabla^{\dag}_{q}(h_\dag)_{1\overline{1}}+\frac{1}{2(n\lambda_1+1)} \sum\limits_{r> 1}\left(|\nabla^{\dag}_{q}(h_\dag)_{r\overline{1}} |^2+|\nabla^{\dag}_{q}(h_\dag)_{1\overline{r}}|^2\right)-C\\
\geq&\nabla^{\dag}_{\overline{q}}\nabla^{\dag}_{q}(h_\dag)_{1\overline{1}}+\frac{1}{4n\lambda_1} \sum\limits_{r> 1}\left(|\nabla^{\dag}_{q}(h_\dag)_{r\overline{1}} |^2+|\nabla^{\dag}_{q}(h_\dag)_{1\overline{r}}|^2\right)-C,\nonumber
\end{align}
where we use the assumption that $\lambda_1\gg  1$.

From \eqref{ricciidentity}, it follows that
\begin{align}\label{qqg11-11gqq}
&\nabla^{\dag}_{\overline{q}}\nabla^{\dag}_q(h_\dag)_{1\overline{1}}-\nabla^{\dag}_{\overline{1}}\nabla^{\dag}_1(h_\dag)_{q\overline{q}}\\
=&\nabla^{\dag}_{\overline{q}}\nabla^{\dag}_q(\beta_\dag)_{1\overline{1}}-\nabla^{\dag}_{\overline{1}}\nabla^{\dag}_1(\beta_\dag)_{q\overline{q}}
 +\nabla^{\dag}_{\overline{q}}\nabla^{\dag}_qu_{1\overline{1}}-\nabla^{\dag}_{\overline{1}}\nabla^{\dag}_1u_{q\overline{q}}\nonumber\\
=&\nabla^{\dag}_{\overline{q}}\nabla^{\dag}_q(\beta_\dag)_{1\overline{1}}-\nabla^{\dag}_{\overline{1}}\nabla^{\dag}_1(\beta_\dag)_{q\overline{q}}
  +R_{q\overline{q}1}{}^pu_{p\overline{1}}
 -R_{1\overline{1}q}{}^pu_{p\overline{q}}=O(\lambda_1).\nonumber
\end{align}
Thanks to \eqref{lambda1qqest} and \eqref{qqg11-11gqq}, we deduce that
\begin{equation}
\label{fqqlambda1qq}
F^{qq}\tilde\lambda_{1,q\overline{q}}
\geq F^{qq}\nabla^{\dag}_{\overline{1}}\nabla^{\dag}_{1}(h_\dag)_{q\overline{q}}+\frac{1}{4n\lambda_1} \sum\limits_{r> 1}F^{qq}\left(|\nabla^{\dag}_{q}(h_\dag)_{r\overline{1}} |^2+|\nabla^{\dag}_{q}(h_\dag)_{1\overline{r}}|^2\right)-C\lambda_1\mathcal{F},
\end{equation}
where $\mathcal{F}=\sum_q F^{qq}\geq \tau>0$ by \eqref{fxiajie} and at the point $x_0$, from \eqref{equ2} and \eqref{eigenvalue2nd},  we have
$$
F^{ij}=\left\{\begin{array}{cc}
                0, & \text{if}\quad i\not=j, \\
                f_i, & \text{if}\quad  i=j.
              \end{array}
\right.
$$
At the point $x_0$, applying $\nabla^{\dag}_{ i}$  to   \eqref{equ1}, we get
\begin{equation}
\label{psii}
\psi_i
= F^{pq}\nabla_i(h_\dag)_{p\overline{t}}(g_\dag)^{\overline{t}q}
=F^{jj}\nabla_i\left((\beta_\dag)_{j\overline{j}}+u_{j\bar j}\right) .
\end{equation}
Applying $\nabla^{\dag}_{\bar i}$ to \eqref{psii}, we deduce that
\begin{align}
\label{psiii}
\psi_{i\bar i}=&F^{pq,rs}\left(\nabla^{\dag}_{i}(h_\dag)_{p\overline{t}}(g_\dag)^{\overline{t}q}\right)
\left(\nabla^{\dag}_{\bar i}(h_\dag)_{r\overline{\ell}}(g_\dag)^{\overline{\ell}s}\right) \\
&+F^{pq}\nabla^{\dag}_{\bar i}\nabla^{\dag}_i(h_\dag)_{p\overline{t}}(g_\dag)^{\overline{t}q}\nonumber\\
=&F^{pq,rs}\left(\nabla^{\dag}_{i}(h_\dag)_{p\overline{q}} \right)
\left(\nabla^{\dag}_{\bar i}(h_\dag)_{r\overline{s}} \right)
 +F^{pq}\nabla^{\dag}_{\bar i}\nabla^{\dag}_i(h_\dag)_{p\overline{q}}.\nonumber
\end{align}
From \eqref{fqqlambda1qq} and \eqref{psiii}, it follows that
\begin{align}
\label{fqqlambda1qq2}
F^{qq}\tilde\lambda_{1,q\overline{q}}
\geq& \frac{1}{4n\lambda_1} \sum\limits_{r> 1}F^{qq}\left(|\nabla^{\dag}_{q}(h_\dag)_{r\overline{1}} |^2+|\nabla^{\dag}_{q}(h_\dag)_{1\overline{r}}|^2\right)\\
&-F^{pq,rs}\left(\nabla^{\dag}_{1}(h_\dag)_{p\overline{q}} \right)
\left(\nabla^{\dag}_{\bar 1}(h_\dag)_{r\overline{s}} \right)-C\lambda_1\mathcal{F},\nonumber
\end{align}
where we absorb the term of $\psi_{1\bar 1}$ into $-C\lambda_1\mathcal{F}$ since $\mathcal{F}\geq\tau>0$.

Combining \eqref{hqq} and \eqref{fqqlambda1qq2},  we get
\begin{align}
\label{fqqhqqdoth2}
0\geq& L(\tilde H)=F^{qq}\tilde H_{q\bar q}\\
\geq&-\frac{1}{\lambda_1}F^{pq,rs}\left(\nabla^{\dag}_{1}(h_\dag)_{p\overline{q}} \right)
\left(\nabla^{\dag}_{\bar 1}(h_\dag)_{r\overline{s}} \right)-C \mathcal{F}\nonumber\\
&-\frac{F^{qq}|\tilde{\lambda}_{1,q}|^2}{\lambda_1^2}
+ \rho'\Big( F^{qq}u_r\nabla^{\dag}_{\overline{q}}u_{\overline{r}q}
+F^{qq}u_{\overline{r}}\nabla^{\dag}_{\overline{q}}\nabla^{\dag}_{q}u_r
 \Big)\nonumber  \\
 &+\sum_r\frac{F^{qq}}{4K}\Big(|\nabla^{\dag}_{q}u_{r}|^2
+ |u_{\overline{r}q}|^2\Big)\nonumber\\
& + \rho''F^{qq}|V_q|^2 + \varphi''F^{qq}|u_q|^2 + \varphi' F^{qq}u_{q\overline{q}},\nonumber
\end{align}
since $\rho'\geq 1/(4K)$.

Using the Ricci identity \eqref{ricciidentity}, \eqref{psii} and the fact that $\rho'\leq 1/(2K)$, we can get
\begin{align}\label{fenleiqian}
\rho'F^{qq}u_r\nabla^{\dag}_{\overline{q}}u_{\overline{r}q}
=&\rho'F^{qq}u_r \nabla^{\dag}_{\overline{r}}u_{q\overline{q}} \\
=&\rho'F^{qq}u_r\left(\nabla^{\dag}_{\overline{r}}(h_\dag)_{q\overline{q}}
-\nabla^{\dag}_{\overline{r}}(\beta_\dag)_{q\overline{q}}
\right)\geq -\frac{C\mathcal{F}}{K^{1/2}},\nonumber\\
\label{fenleiqian1}
\rho'F^{qq}u_{\overline{r}} \nabla^{\dag}_{\overline{q}}\nabla^{\dag}_{q}u_{r}
=&\rho'F^{qq}u_{\overline{r}}\nabla^{\dag}_{\overline{q}} \nabla^{\dag}_{r}u_{q}   \\
=&\rho'F^{qq}u_{\overline{r}}\left(\nabla^{\dag}_{r}u_{q\overline{q}}-R_{r\overline{q}q}{}^pu_p
 \right)\nonumber\\
=&\rho'F^{qq}u_{\overline{r}}\left(\nabla^{\dag}_{r}(h_\dag)_{q\overline{q}}-\nabla^{\dag}_{r}(\beta_\dag)_{q\overline{q}}
-R_{r\overline{q}q}{}^pu_p\right) \geq -C\mathcal{F}.\nonumber
\end{align}
From \eqref{fqqhqqdoth2}, \eqref{fenleiqian}, \eqref{fenleiqian1} and the fact that $\rho'\geq 1/(4K)$, it follows that
\begin{align}
\label{fqqhqq3}
0\geq &-\frac{1}{\lambda_1}F^{pq,rs}\left(\nabla^{\dag}_{1}(h_\dag)_{p\overline{q}} \right)
\left(\nabla^{\dag}_{\bar 1}(h_\dag)_{r\overline{s}} \right)-C \mathcal{F}\\
&-\frac{F^{qq}|\tilde{\lambda}_{1,q}|^2}{\lambda_1^2}
+ \sum_r\frac{F^{qq}}{4K}\left(
 |\nabla^{\dag}_{q}u_{r}|^2
+  |u_{\overline{r}q}|^2 \right)\nonumber  \\
& + \rho''F^{qq}|V_q|^2 + \varphi''F^{qq}|u_q|^2 + \varphi' F^{qq}u_{q\overline{q}}, \nonumber
\end{align}
where we use the fact that $K>1$ and hence $K^{-1/2}<1$ and absorb the constant into $C\mathcal{F}$.

Next, we deal with two cases separately.

\textbf {Case 1:} $\delta\lambda_1\geq -\lambda_n$, where the constant $\delta$ will be determined later.
We set
$$
I:=\Big\{i:\;F^{ii}>\delta^{-1}F^{11}\Big\}.
$$
From \eqref{hq}, the Cauchy-Schwarz inequality yields that
\begin{align}
\label{case1qnotini}
-\sum_{q\not\in I}\frac{F^{qq} |\tilde\lambda_{1,q} |^2}{\lambda_1^2}
=&-\sum_{q\not\in I}F^{qq}\left|\rho'V_q+\varphi'u_q\right|^2\\
\geq&-2\rho'^2\sum_{q\not\in I}F^{qq}\left| V_q\right|^2-2\varphi'^2\sum_{q\not\in I}F^{qq}\left|\varphi'u_q\right|^2\nonumber\\
\geq&-\rho''\sum_{q\not\in I}F^{qq}\left| V_q\right|^2-2\varphi'^2\delta^{-1}F^{11}K.\nonumber
\end{align}
Similarly, for $q\in I$ we also have
\begin{equation}
\label{case1qini}
-2\delta\sum_{q\in I}\frac{F^{qq} |\tilde\lambda_{1,q} |^2}{\lambda_1^2}
\geq -2\delta\rho''\sum_{q\in I}F^{qq}|V_q|^2-4\delta\varphi'^2\sum_{q\in I}F^{qq}|u_k|^2.
\end{equation}
Since $\varphi''>0$, we can choose $\delta$ sufficiently small (i.e., depending on $D_1,\,D_2$ and $\sup_M|u|$) such that
\begin{equation}
\label{deltachoice}
4\delta\varphi'^2<\frac{1}{2}\varphi''.
\end{equation}
Substituting \eqref{case1qnotini}, \eqref{case1qini} and \eqref{deltachoice}
into \eqref{fqqhqq3} shows that
\begin{align}
\label{mainequ1}
0\geq &-\frac{1}{\lambda_1}F^{pq,rs}\left(\nabla^{\dag}_{1}(h_\dag)_{p\overline{q}} \right)
\left(\nabla^{\dag}_{\bar 1}(h_\dag)_{r\overline{s}} \right)-C \mathcal{F}\\
&-(1-2\delta)\sum_{q\in I}\frac{F^{qq}|\tilde{\lambda}_{1,q}|^2}{\lambda_1^2}
+ \sum_r\frac{F^{qq}}{4K}\left(
 |\nabla^{\dag}_{q}u_{r}|^2
+  |u_{\overline{r}q}|^2 \right)\nonumber  \\
&-2\varphi'^2\delta^{-1}F^{11}K + \frac{1}{2}\varphi''F^{qq}|u_q|^2 + \varphi' F^{qq}u_{q\overline{q}}, \nonumber
\end{align}
The assumption that $\delta\lambda_1\geq -\lambda_n$ implies that
\begin{equation}
\label{deltaprop}
  \frac{1-\delta}{\lambda_1-\lambda_k}\geq \frac{1-2\delta}{\lambda_1}.
\end{equation}
Substituting \eqref{deltaprop} into \eqref{cabor67}  with $k=1$ means that
\begin{align}
\label{fpqrsest}
-\frac{1}{\lambda_1}F^{pq,rs}\left(\nabla^{\dag}_{1}(h_\dag)_{p\overline{q}} \right)
\left(\nabla^{\dag}_{\bar 1}(h_\dag)_{r\overline{s}} \right)\geq&
 \sum_{q\in I} \frac{F^{qq}- F^{11}}{\lambda_1 - \lambda_q}
|\nabla^{\dag}_1(h_\dag)_{q\bar 1}|^2\\
\geq&\frac{1-2\delta}{\lambda_1}\sum_{q\in I}F^{qq}|\nabla^{\dag}_1(h_\dag)_{q\bar 1}|^2.\nonumber
\end{align}
From \eqref{mainequ1} and \eqref{fpqrsest}, it follows that
\begin{align}
\label{mainequ2}
0\geq &\frac{1-2\delta}{\lambda_1^2}\sum_{q\in I}F^{qq}\left(|\nabla^{\dag}_1(h_\dag)_{q\bar 1}|^2-|\tilde{\lambda}_{1,q}|^2\right)-C \mathcal{F}\\
&
+ \sum_r\frac{F^{qq}}{4K}\left(
 |\nabla^{\dag}_{q}u_{r}|^2
+  |u_{\overline{r}q}|^2 \right)\nonumber  \\
&-2\varphi'^2\delta^{-1}F^{11}K + \frac{1}{2}\varphi''F^{qq}|u_q|^2 + \varphi' F^{qq}u_{q\overline{q}}, \nonumber
\end{align}
From \eqref{ricciidentity} and \eqref{lambda1p}, a direct calculation gives
\begin{align*}
\nabla^{\dag}_1(h_\dag)_{q\bar 1}
=&\nabla^{\dag}_1(\beta_\dag)_{q\bar 1}+\nabla^{\dag}_1u_{q\bar 1}\\
=&\nabla^{\dag}_1(\beta_\dag)_{q\bar 1}+\nabla^{\dag}_qu_{1\bar 1}\nonumber\\
=&\nabla^{\dag}_1(\beta_\dag)_{q\bar 1}-\nabla^{\dag}_q(\beta_\dag)_{1\bar 1}+\nabla^{\dag}_q(h_\dag)_{1\bar 1}\nonumber\\
=&\nabla^{\dag}_1(\beta_\dag)_{q\bar 1}-\nabla^{\dag}_q(\beta_\dag)_{1\bar 1}+\tilde\lambda_{1,q}
=\tilde\lambda_{1,q}+O(1),\nonumber
\end{align*}
which yields that
\begin{equation}
\label{hqbra1difftildelambda1q}
|\nabla^{\dag}_1(h_\dag)_{q\bar 1}|^2-|\tilde{\lambda}_{1,q}|^2\geq -C(1+|\tilde{\lambda}_{1,q}|).
\end{equation}
By \eqref{hq},  for any $\varepsilon>0$, there exists a constant $C_\varepsilon$ such that
\begin{align}
\label{tildelambda1q}
|\tilde{\lambda}_{1,q}|=&\lambda_1\left|\rho'\left(u_ru_{\overline{r}q}
+u_{\overline{r}}\nabla^{\dag}_qu_{r}\right)  +
\varphi' u_q\right|\\
\leq&\frac{C\lambda_1}{K^{1/2}}\left(\sum_r|u_{\overline{r}q}| +\sum_r|\nabla^{\dag}_qu_{r}|\right)
+\lambda_1|\varphi'||u_q|\nonumber\\
\leq&\frac{C\lambda_1}{K^{1/2}}\left(\sum_r|u_{\overline{r}q}| +\sum_r|\nabla^{\dag}_qu_{r}|\right)
-\varepsilon\lambda_1 \varphi'-C_\varepsilon \lambda_1\varphi'|u_q|^2,\nonumber
\end{align}
where we also use the assumption that $\lambda\gg1.$

Substituting \eqref{hqbra1difftildelambda1q} and \eqref{tildelambda1q} into \eqref{mainequ2}, Young's inequality shows
\begin{align}
\label{mainequ3}
0\geq &-C\mathcal{F}
+ \sum_r\frac{F^{qq}}{8K}\left(
 |\nabla^{\dag}_{q}u_{r}|^2
+  |u_{\overline{r}q}|^2 \right)   \\
&-2\varphi'^2\delta^{-1}F^{11}K + \frac{1}{2}\varphi''F^{qq}|u_q|^2 + \varphi' F^{qq}u_{q\overline{q}}
+C\varepsilon \varphi'\mathcal{F}+C_\varepsilon \varphi'F^{qq}|u_q|^2  \nonumber\\
\geq&F^{11}\left(\frac{\lambda_1^2}{16K}-2\varphi'^2\delta^{-1}K\right)
+\left(\frac{1}{2}\varphi''+C_\varepsilon \varphi'\right)F^{qq}|u_q|^2\nonumber\\
&-C\mathcal{F}+C\varepsilon \varphi'\mathcal{F}
-\varphi'F^{qq}\left((\beta_\dag)_{q\overline{q}}-(h_\dag)_{q\overline{q}}\right),\nonumber
\end{align}
where we use the fact that $|u_{1\bar 1}|^2\geq \lambda_1^2/2-C$ and the assumption that $\lambda_1\gg1$.

By \eqref{3.4}, \eqref{3.5} and \eqref{fxiajie}, one of the two possibilities happens.
\begin{enumerate}
\item[(a)] The inequality \eqref{3.5} holds, i.e., we have $F^{qq}\left((\beta_\dag)_{q\overline{q}}-(h_\dag)_{q\overline{q}}\right)>\kappa \mathcal{F}$.
   We apply this to  \eqref{mainequ3} and get
   \begin{align}
   \label{mainequ4}
     0\geq & F^{11}\left(\frac{\lambda_1^2}{16K}-2\varphi'^2\delta^{-1}K\right)
+\left(\frac{1}{2}\varphi''+C_\varepsilon \varphi'\right)F^{qq}|u_q|^2 \\
&-C\mathcal{F}+C\varepsilon \varphi'\mathcal{F}
-\kappa\varphi'\mathcal{F}.\nonumber
   \end{align}
   We first choose $\varepsilon>0$ such that $\varepsilon<\kappa/2$. Then we choose $D_2$ in the definition of $\varphi(t)=D_1e^{-D_2t}$ sufficiently large such that
   \begin{equation*}
     \frac{1}{2}\varphi''>C_\varepsilon|\varphi'|.
   \end{equation*}
 This, together with \eqref{mainequ4}, shows that
   \begin{equation*}
     0\geq   F^{11}\left(\frac{\lambda_1^2}{16K}-2\varphi'^2\delta^{-1}K\right)
 -C\mathcal{F}-\frac{1}{2}\kappa\varphi'\mathcal{F}.
   \end{equation*}
   Now we choose $D_1$ so large that $-\frac{1}{2}\kappa\varphi'>C$, which yields that
    \begin{equation*}
     0\geq    \frac{\lambda_1^2}{16K}-2\varphi'^2\delta^{-1}K .
   \end{equation*}
   Recall that $\delta$ is determined by the choice of $D_1,\,D_2$ from \eqref{deltachoice}. Hence we get the upper bound of $\lambda_1/K$.
\item[(b)]   The inequality \eqref{3.4} holds, which yields that $F^{11}\geq \kappa\mathcal{F}$. With the choice of constants chosen above, from \eqref{mainequ3}, it follows that
   \begin{equation*}
0\geq  \kappa\mathcal{F}\left(\frac{\lambda_1^2}{16K}-2\varphi'^2\delta^{-1}K\right)
 -C\mathcal{F}+C\varepsilon \varphi'\mathcal{F}
-\varphi'F^{qq}(h_\dag)_{q\overline{q}}.
\end{equation*}
 This, together with $F^{qq}(h_\dag)_{q\overline{q}}\leq \lambda_1\mathcal{F}$, shows that
 \begin{equation*}
0\geq \frac{ \kappa\lambda_1^2}{16K^2}-C\left(1+K^{-1}+\lambda_1K^{-1}\right),
\end{equation*}
from which we get the upper bound of $\lambda_1/K$ required.
\end{enumerate}
\textbf {Case 2:} $\delta\lambda_1\leq -\lambda_n$, where the constants $\delta,\,D_1$ and $D_2$ are fixed as in the previous case.
Note that $F^{nn}\geq \frac{1}{n}\mathcal{F}$ and $\lambda_n^2\geq \delta^2\lambda_1^2$. This yields that
\begin{align}
\label{case2fqq}
\sum_r\frac{F^{qq}}{4K}\left(
 |\nabla^{\dag}_{q}u_{r}|^2
+  |u_{\overline{r}q}|^2 \right)
\geq&\frac{F^{nn}}{4K}|u_{n\bar n}|^2\geq   \frac{F^{nn}}{4K}\left|\lambda_n-(\beta_\dag)_{n\bar n}\right|^2\\
\geq& \frac{\delta^2}{8nK}\mathcal{F}|\lambda_1|^2-C\mathcal{F}.\nonumber
\end{align}
Similar to \eqref{case1qnotini}, we get
\begin{align}
\label{case2qnotini}
- \frac{F^{qq} |\tilde\lambda_{1,q} |^2}{\lambda_1^2}
=&- F^{qq}\left|\rho'V_q+\varphi'u_q\right|^2\\
\geq&-2\rho'^2 F^{qq}\left| V_q\right|^2-2\varphi'^2 F^{qq}\left|\varphi'u_q\right|^2\nonumber\\
\geq&-\rho'' F^{qq}\left| V_q\right|^2-CK\mathcal{F}.\nonumber
\end{align}
Substituting \eqref{case2fqq} and \eqref{case2qnotini} into \eqref{fqqhqq3}, we get
\begin{equation}
\label{case2fqqhqq1}
0\geq \frac{\delta^2}{8nK}\mathcal{F}|\lambda_1|^2  -C\lambda_1 \mathcal{F},
\end{equation}
where we use the  fact that $\varphi' F^{qq}u_{q\overline{q}}=O(\lambda_1\mathcal{F})$ and the assumption that $\lambda_1\gg K$.
This also yields the required upper bound of $\lambda_1/K$.
\end{proof}
\section{First Order Estimate}\label{sec1st}
In this section, we prove the first order estimate of the solution to \eqref{equ1} and complete the proof of Theorem \ref{mainthm}.
\begin{thm}
\label{thm1order}
Suppose that the function $u\in C_{\mathrm{B}}^\infty(M,\mathbb{R})$ is a solution to \eqref{equ1} with $\sup_Mu=0,$ and that the function $\underline{u}\in C_{\mathrm{B}}^\infty(M,\mathbb{R})$ is a transverse $\mathcal{C}$-subsolution to \eqref{equ1}. There exists a uniform constant $C$ depending only on $\eta,\,\omega_\dag$ and $\underline{u}$ such that
\begin{equation}
\label{1stest}
\sup_M|\nabla u|_{\omega_\dag}\leq C.
\end{equation}
\end{thm}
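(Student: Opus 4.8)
The plan is to prove \eqref{1stest} by a blow-up argument that reduces it to a Liouville-type rigidity theorem on $\mathbb{C}^n$, following \cite{gaborjdg} (cf.\ \cite{dkajm,stw1503,collinszekelyhidijdg2017,twjams}); the Sasakian structure will enter only through the passage to foliated charts. As in the earlier sections I would first normalize $\underline{u}\equiv 0$ by absorbing $\partial_{\mathrm{B}}\overline{\partial}_{\mathrm{B}}\underline{u}$ into $\beta_\dag$, so that the constant function $0$ is the transverse $\mathcal{C}$-subsolution. Suppose \eqref{1stest} fails; then there is a sequence of basic solutions $u_j\in C_{\mathrm{B}}^\infty(M,\mathbb{R})$ of \eqref{equ1} with $\sup_Mu_j=0$ and $C_j:=\sup_M|\nabla u_j|_{\omega_\dag}\to+\infty$. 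Theorem \ref{thm0order} gives $\sup_M|u_j|\le C$ uniformly, and Theorem \ref{thm2order} gives $|\partial_{\mathrm{B}}\overline{\partial}_{\mathrm{B}}u_j|_{\omega_\dag}\le CK_j$ with $K_j=1+C_j^2$. Choose $\mathbf{x}_j\in M$ realizing $\sup_M|\nabla u_j|_{\omega_\dag}$; after passing to a subsequence $\mathbf{x}_j\to\mathbf{x}_\infty$ and $\psi(\mathbf{x}_j)\to c$. Work in a foliated chart $(-\varepsilon_0,\varepsilon_0)\times B_2(\mathbf 0)$ centered at $\mathbf{x}_j$, of the type constructed in Subsection \ref{seclocalcoordinate}, in which $\xi=\partial_x$, $\omega_\dag$ is uniformly Euclidean, and---crucially---every object entering the estimate ($A_{\dag,u_j}$, $\partial_{\mathrm{B}}\overline{\partial}_{\mathrm{B}}u_j$, $|\nabla u_j|_{\omega_\dag}$) is computed from the transverse variables $(z_1,\dots,z_n)$ alone, since $u_j$ is basic.

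Next I would rescale: set $\tilde u_j:=u_j\circ\Phi_j$ with $\Phi_j(\mathbf z)=\mathbf z/C_j$, defined on the Euclidean ball of radius $\sim C_j$, and give the source the rescaled metric $g^{(j)}:=C_j^2\,\Phi_j^*\omega_\dag$, which converges in $C^\infty_{\mathrm{loc}}$ to a flat metric on $\mathbb{C}^n$. A routine scaling computation gives $\sup|\tilde u_j|\le C$, $|\nabla\tilde u_j|_{g^{(j)}}\le 1$ everywhere with $|\nabla\tilde u_j|_{g^{(j)}}(\mathbf 0)=1$, and $|\partial\overline{\partial}\tilde u_j|_{g^{(j)}}\le C_j^{-2}CK_j\le C$ uniformly. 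Hence $\{\tilde u_j\}$ is bounded in $C^{1,1}_{\mathrm{loc}}(\mathbb{C}^n)$ and, along a further subsequence, converges in $C^{1,\alpha}_{\mathrm{loc}}(\mathbb{C}^n)$ to some $u_\infty\in C^{1,1}(\mathbb{C}^n)$ that is bounded and non-constant, with $|\nabla u_\infty|(\mathbf 0)=1$. Writing \eqref{equ1} in the rescaled coordinates, $\beta_\dag$ and the metric converge to constant forms, so $A_{\dag,u_j}$ converges to $A_\infty:=B_\infty+\sqrt{-1}\partial\overline{\partial}u_\infty$ for a constant Hermitian matrix $B_\infty$, with $\lambda(A_\infty)\in\overline\Gamma$ almost everywhere, and $u_\infty$ solves, in the viscosity/a.e.\ sense, the constant-coefficient limiting equation $f(\lambda(A_\infty))=c$. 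Moreover the constant $0$ is a $\mathcal{C}$-subsolution of this limiting equation, because $A_{\dag,0}(\mathbf x_j)\to A_{\dag,0}(\mathbf x_\infty)$ and $\psi(\mathbf x_j)\to c$ while the uniform subsolution bound over $M$ (with the constants $\delta,R$ used in Theorems \ref{thm0order} and \ref{thm2order}) is inherited in the limit.

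I would then invoke the Liouville-type rigidity theorem for such constant-coefficient equations admitting a constant $\mathcal{C}$-subsolution, as in \cite[\S 3]{gaborjdg}, which rests on the rigidity result of Dinew--Ko{\l}odziej \cite{dkajm} (cf.\ \cite{stw1503,collinszekelyhidijdg2017}): a bounded $u_\infty\in C^{1,1}(\mathbb{C}^n)$ with $\lambda(A_\infty)\in\overline\Gamma$ a.e.\ solving $f(\lambda(A_\infty))=c$, for which $0$ is a $\mathcal{C}$-subsolution, must be affine, hence constant since bounded. This contradicts $|\nabla u_\infty|(\mathbf 0)=1$ and proves \eqref{1stest}. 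Together with Theorems \ref{thm0order} and \ref{thm2order}, \eqref{1stest} upgrades \eqref{equ2order} to $\sup_M|\partial_{\mathrm{B}}\overline{\partial}_{\mathrm{B}}u|_{\omega_\dag}\le C$; the transverse Evans--Krylov estimate followed by Schauder theory, applied in the foliated charts to the basic function $u$ (using that the extension $\widetilde{\Delta}$ of $\Delta_{\mathrm{d}_{\mathrm{B}}}$ is uniformly elliptic modulo terms of order $\le 1$), then yields $\|u\|_{C^{2,\alpha}(M,g)}\le C$ and completes the proof of Theorem \ref{mainthm}.

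The main obstacle is the Liouville theorem at $C^{1,1}$ regularity: since Theorem \ref{thm2order} only bounds $|\partial_{\mathrm{B}}\overline{\partial}_{\mathrm{B}}u_j|$ by $CK_j$, the blow-up limit is merely $C^{1,1}$ and solves the limiting equation only weakly, so one must work with the pluripotential/viscosity form of the rigidity statement (the model case $f=\det^{1/n}$ being exactly Dinew--Ko{\l}odziej), and one must check that the $\mathcal{C}$-subsolution hypothesis---the feature that makes the whole scheme work for an arbitrary cone $\Gamma$---genuinely survives the rescaling. By contrast the Sasakian aspects are harmless here: because $u$ is basic, every quantity in the argument is transverse, and in the foliated charts of Subsection \ref{seclocalcoordinate} the entire blow-up takes place on domains in $\mathbb{C}^n$, identically to the K\"ahler/Hermitian case.
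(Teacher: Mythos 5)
Your overall architecture (normalize $\underline{u}=0$, blow up at the gradient maximum, pass to a limit on $\mathbb{C}^n$, contradict a Liouville theorem) is exactly the route the paper takes. However, there is a genuine error in your identification of the blow-up limit, and it lands on the one point where the argument could actually fail. After rescaling by $\Phi_j(\mathbf{z})=\mathbf{x}_j+\mathbf{z}/C_j$, the equation becomes
\begin{equation*}
F\left(C_j^{2}(\gamma_j)^{\bar q s}\left((\chi_j)_{r\overline{q}}+(\hat u_j)_{r\overline{q}}\right)\right)=\psi_j,
\qquad \gamma_j:=C_j^2\Phi_j^*\omega_\dag\to\gamma,\quad \chi_j:=\Phi_j^*\beta_\dag\to 0,
\end{equation*}
so the matrix fed into $F$ carries an uncompensated factor $C_j^{2}\to\infty$, and the bounded quantity $(\gamma_j)^{\bar qs}((\chi_j)_{r\bar q}+(\hat u_j)_{r\bar q})$ converges to the flat Hessian $ (u_\infty)_{r\bar s}$ with no residual constant matrix $B_\infty$ (the pullback $\Phi_j^*\beta_\dag$ scales like $C_j^{-2}$). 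Consequently the limit does \emph{not} satisfy $f(\lambda(A_\infty))=c$; the nondegenerate equation does not survive the rescaling. What survives is precisely the degenerate pair of viscosity inclusions of Definition \ref{gammasol}: touching from above forces $\lambda\in\overline{\Gamma}$, while touching from below forces $\lambda\in\mathbb{R}^n\setminus\Gamma$ --- the second because if the test Hessian stayed in $\Gamma+3\epsilon\mathbf{1}$ then assumption \eqref{assum3} would drive $f(C_j^2\cdot)$ above $\sup\psi_j$, contradicting the equation. The Liouville theorem one then needs is Theorem \ref{thmgaborthm20} for Lipschitz $\Gamma$-solutions (Sz\'ekelyhidi, resting on Dinew--Ko{\l}odziej in the model case $\Gamma=\Gamma_n$, where the limit is a bounded \emph{maximal} psh function, i.e.\ $(\partial\bar\partial u_\infty)^n=0$, not $\det=e^c$). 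The rigidity statement you invoke --- a bounded $C^{1,1}$ solution of the nondegenerate $f(\lambda(A_\infty))=c$ admitting a constant $\mathcal{C}$-subsolution is affine --- is not the theorem that exists, and your hedging remark about the ``pluripotential/viscosity form'' does not repair the misstated limit equation.

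Two smaller points. First, for the contradiction argument to be non-vacuous you should (as the paper does) allow a sequence of data $\beta_{\dag,j},\psi_j$ with uniform $C^2$ bounds and range of $\psi_j$ compactly contained in $(\sup_{\partial\Gamma}f,\sup_\Gamma f)$; a single fixed equation need not admit a sequence of solutions with $C_j\to\infty$. Second, the compactness you want is strong convergence in $C^{1,\alpha}_{\mathrm{loc}}$ together with weak $W^{2,p}_{\mathrm{loc}}$ convergence obtained from the two-sided Laplacian bound $|\sqrt{-1}\partial_{\mathrm{B}}\overline{\partial}_{\mathrm{B}}\hat u_j|_\gamma\le C$ via elliptic estimates; this is enough to run the viscosity argument, and ``bounded in $C^{1,1}_{\mathrm{loc}}$'' is a slight overstatement of what the hypotheses give you directly.
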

Let us recall some concepts about $\Gamma$-solution from \cite{gaborjdg}.
\begin{defn}
\label{gammasol}
Suppose that $u:\,\mathbb{C}^n\to\mathbb{R}$ is a continuous function. We call $u$ is a $($viscosity$)$ $\Gamma$-subsolution if for all $h\in C^2(\mathbb{C}^n,\mathbb{R})$ such that $u-h$ has local maximum at $\mathbf{z},$ then there holds $\lambda(h_{i\bar j}(\mathbf{z}))\in \overline{\Gamma},$ where $\lambda(A)$ is the eigenvalues of the Hermitian metric $A$.

We call that a  $\Gamma$-subsolution $u$ is a $\Gamma$-solution if for all $\mathbf{z}\in \mathbb{C}^n$, $h\in C^2(\mathbb{C}^n,\mathbb{R})$ such that $u-h$ has local minimum at $\mathbf{z},$ then $\lambda(h_{i\bar j}(\mathbf{z}))\in\mathbb{R}^n\setminus\Gamma$.
\end{defn}
Note that $\Gamma$-subsolution is subharmonic since $\Gamma\subset\left\{\mathbf{x}=(x_1,\dots,x_n)\in\mathbb{R}^n:\;\sum_{i=1}^nx_i>0\right\}$.
\begin{thm}[Sz\'{e}kelyhidi \cite{gaborjdg}]
\label{thmgaborthm20}
Let $u:\,\mathbb{C}^n\to\mathbb{R}$ be a Lipschitz $\Gamma$-solution such that $|u|\leq C$ and $u$ has Lipschitz constant bounded by $C$. Then $u$ is a constant function.
\end{thm}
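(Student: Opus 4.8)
The plan is to argue by contradiction, in the spirit of \cite{gaborjdg}, exploiting that a $\Gamma$-solution is simultaneously a viscosity sub- and supersolution of two closely related degenerate conditions. Suppose $u\colon\C^n\to\R$ is a non-constant Lipschitz $\Gamma$-solution with $\|u\|_{L^\infty(\C^n)}\le C$ and Lipschitz constant $\le C$; after adding a constant we may assume $\inf_{\C^n}u=0$. Since $u$ is a $\Gamma$-subsolution and $\Gamma\subsetneqq\{\mathbf x\in\R^n:\sum_i x_i>0\}$, $u$ is subharmonic, as noted after Definition \ref{gammasol}; being non-constant, its supremum $M:=\sup_{\C^n}u\le C$ is not attained, by the strong maximum principle, so $u<M$ everywhere, and after a translation we may fix the origin with $u(\mathbf 0)<M$.

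Next I would extract the two-sided structural content of Definition \ref{gammasol}. Because $\Gamma\supseteq\Gamma_n$ is an open convex cone with vertex at the origin and $\overline\Gamma\subseteq\{\sum_i x_i\ge0\}$, one has the rigidity $\overline\Gamma\cap(-\overline{\Gamma_n})=\{\mathbf 0\}$. Testing the $\Gamma$-solution condition at a point $z$ where $u$ has a second order Taylor expansion against the two paraboloids with complex Hessians $u_{i\overline j}(z)\pm\delta\,\delta_{ij}$ and letting $\delta\downarrow0$ gives $\lambda(u_{i\overline j}(z))\in\overline\Gamma\cap(\R^n\setminus\Gamma)=\partial\Gamma$; more robustly, $u$ can never be touched from below by a $C^2$ function whose complex Hessian has all eigenvalues in $\Gamma$, in particular not by any strictly plurisubharmonic function. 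Thus $u$ is a bounded subharmonic function that is moreover ``maximal'' — for instance when $\Gamma=\{\mathbf x:\sum_i x_i>0\}$ these two facts already say that $u$ is harmonic, hence constant by the classical Liouville theorem — and everything reduces to proving that a general such $u$ is constant.

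For the main step I would run the Alexandrov--Bakelman--Pucci measure estimate on large balls, exactly as in the proof of Theorem \ref{thm0order} and \cite[Propositions 10 and 11]{gaborjdg}. Fix $\epsilon_0>0$ with $u(\mathbf 0)<M-2\epsilon_0$; for large $R$ choose $\epsilon=\epsilon(R)>0$ with $\epsilon R^2\to\infty$ and set $v:=u+\epsilon|\mathbf z|^2$ on $\overline{B_R(\mathbf 0)}$. Then $v(\mathbf 0)<\sup_{\C^n}u$ while $v\ge\epsilon R^2$ on $\partial B_R(\mathbf 0)$, so $v$ attains an interior minimum and \cite[Proposition 11]{gaborjdg} yields $c_0\epsilon^{2n}\le\int_P\det(D^2v)$, where $P\subset B_R(\mathbf 0)$ is the lower contact set of $v$ along affine supports of slope $\le\epsilon/2$; on $P$ one has $D^2v\ge0$, hence $u_{i\overline j}\ge-\epsilon\delta_{ij}$ a.e. and $|Dv|\le\epsilon/2$. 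The key estimate I would then need is that, after replacing $u$ by a mollification $u*\rho_\delta$ (which preserves being a $\Gamma$-subsolution, by the convexity of $\{A:\lambda(A)\in\overline\Gamma\}$ together with Jensen's inequality, and preserves the lower contact data), the structural constraint $\lambda(u_{i\overline j})\in\partial\Gamma$ together with $\lambda(u_{i\overline j})\ge-\epsilon\mathbf 1$ and the rigidity $\overline\Gamma\cap(-\overline{\Gamma_n})=\{\mathbf 0\}$ controls $\det(D^2v)$ on $P$ sharply enough that, combined with the elementary bound $\mathrm{Vol}(P)\le C R^{2n}/(M-\epsilon_0-u(\mathbf 0))$ coming from $v<v(\mathbf 0)+\epsilon/2$ on $P$ (cf. \eqref{volpomegaupper}), letting $R\to\infty$ forces $\mathrm{osc}_{\C^n}u=0$, the desired contradiction. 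Alternatively one can phrase the whole argument pluripotential-theoretically, reducing to the statement that a bounded maximal function on $\C^n$ in the sense of B{\l}ocki is constant, in the manner of \cite{dkajm}.

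The hard part is precisely this last estimate. Unlike in Theorem \ref{thm0order}, there is no transverse $\mathcal C$-subsolution here to force the relevant slice of $\partial\Gamma$ to be bounded, so one cannot directly bound $|u_{i\overline j}|$ on $P$; and the merely Lipschitz (and genuinely degenerate) regularity of $u$ forbids pointwise second order reasoning, so one must work with the Alexandrov Hessian of the lower convex envelope and with mollifications, and extract the required decay of $\int_P\det(D^2v)$ purely from the cone geometry of $\Gamma$ and the coupling $\epsilon R^2\to\infty$. Making this quantitative interplay between $R$, $\epsilon$ and the cone close is where essentially all the work lies.
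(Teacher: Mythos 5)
First, a point of comparison: the paper does not prove Theorem \ref{thmgaborthm20} at all. It is imported verbatim from \cite[Theorem 20]{gaborjdg} and used as a black box in the blow-up argument of Theorem \ref{thm1order}, so there is no in-paper proof to measure your sketch against; the only question is whether your outline would actually establish the result.

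It would not, and the gap is exactly the one you flag at the end -- it is not a technicality but the whole theorem. Your strategy transplants the Alexandroff--Bakelman--Pucci argument of Theorem \ref{thm0order} to balls $B_R(\mathbf{0})$ with $R\to\infty$, but that argument closes only because the transverse $\mathcal{C}$-subsolution makes the set $\left(\lambda(A_{\dag,0})-\delta\mathbf{1}+\Gamma_n\right)\cap\partial\Gamma^{\psi}$ bounded, which is precisely what bounds $|u_{i\bar j}|$, and hence $\det(D^2v)$, on the contact set $P$. For a bare $\Gamma$-solution the only information available on $P$ is $\lambda\in\partial\Gamma$ (and only in a mollified sense, since a Lipschitz function need not admit a second-order expansion anywhere, so Alexandrov-type pointwise reasoning is unavailable) together with $\lambda\geq-\epsilon\mathbf{1}$; for $\Gamma=\Gamma_n$, or any $\Gamma_k$ with $k\geq2$, this set is unbounded (e.g.\ $(0,T,\dots,T)\in\partial\Gamma_n$ satisfies $\lambda\geq-\epsilon\mathbf{1}$ for every $T>0$), so $\det(D^2v)$ on $P$ is not controlled by any power of $\epsilon$ and the cone rigidity $\overline{\Gamma}\cap(-\overline{\Gamma_n})=\{\mathbf{0}\}$ cannot supply the missing bound. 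Moreover, even granting such a bound, the two inequalities you propose, $c_0\epsilon^{2n}\leq\int_P\det(D^2v)$ and $\mathrm{Vol}(P)\leq CR^{2n}/(M-\epsilon_0-u(\mathbf{0}))$, are not in tension as $R\to\infty$ (neither side degenerates), so no contradiction is actually produced; the sentence ``letting $R\to\infty$ forces $\mathrm{osc}_{\mathbb{C}^n}u=0$'' is where the proof would have to live, and it is absent. The actual proof in \cite{gaborjdg} is of a different nature, following \cite{dkajm}: one shows that Lipschitz $\Gamma$-subsolutions are stable under mollification and under taking maxima, proves a comparison principle asserting that a $\Gamma$-solution dominates any continuous $\Gamma$-subsolution with no larger boundary values on a ball, and then runs the Dinew--Ko{\l}odziej translation/regularization monotonicity argument, where boundedness together with the Lipschitz bound forces the relevant subadditive quantity to vanish; no ABP estimate enters. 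To complete your write-up you should follow that route rather than try to force the $\mathcal{C}$-subsolution machinery of Section \ref{sec0order} into a setting that lacks its key hypothesis.
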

\begin{proof}[Proof of Theorem \ref{thm1order}]
We use the blowup argument in \cite{gaborjdg,twjams} originated from \cite{dkajm}.
Assume for a contradiction that \eqref{1stest} does not hold. There exist a sequence of basic real $(1,1)$ form $\beta_{\dag,j}$ and basic smooth functions $\psi_j$ and $u_j$ such that
\begin{align}
&\|\beta_{\dag,j}\|_{C^2(M,g)}+\|\psi_{j}\|_{C^2(M,g)}\leq C\nonumber\\
\label{1ordercon}&\left[\inf_{(x,j)\in M\times\mathbb{N}^*}\psi_j,\sup_{(x,j)\in M\times\mathbb{N}^*}\psi_j\right]\subset\left(\sup_{\partial \Gamma}f,\sup_\Gamma f\right),\\
\label{fequj}
F(A_{\dag,j})=&\psi_j,\quad \text{with}\quad C_j:=\sup_M|\nabla u_j|_{\omega_\dag}\to+\infty,\quad \text{as}\quad j\to +\infty,
\end{align}
where \begin{equation*}
\sup_Mu_j=0,\quad\left(A_{\dag,j}\right)_r{}^s= (g_\dag)^{\overline{q}s}\left((\beta_{\dag,j})_{r\overline{q}}+(u_j)_{r\overline{q}}\right).
\end{equation*}
We also assume that $0$ is a transverse $\mathcal{C}$-subsolution of  equations given by \eqref{fequj}.

From Theorem \ref{thm0order}, it follows that $\sup_M|u_j|\leq C$. For each $j$, there exists $\mathbf{p}_j\in M$ such that $|\nabla u_j|(\mathbf{p}_j)=C_j$.
Without loss of generality, we assume that $\lim_{j\to \infty}\mathbf{p}_j=\mathbf{p}_0\in M$ and that $\mathbf{p}_0$ is the center of the foliated chart $(-\varepsilon_0,\varepsilon_0)\times B_{2}(0)\subset \mathbb{R}\times \mathbb{C}^n$ with all the points $\mathbf{p}_j\in (-\varepsilon_0/2,\varepsilon_0/2)\times B_{1}(\mathbf{0})$.  Now we just need consider $\omega_{\dag},\,\beta_{\dag,j},\,u_j,\,\psi_j,$ as quantities on $B_2(\mathbf{0})$. We also assume that $\mathbf{z}=(z_1,\cdots,z_n)$ is the coordinates on $\mathbb{C}^n$ and that $\omega_\dag(\mathbf{0})=\gamma$, where $\gamma$ is the standard Hermitian metric on $\mathbb{C}^n$.

We define
\begin{equation*}
\hat u_j(\mathbf{z}):=u_j(\mathbf{z}(\mathbf{p}_j)+\mathbf{z}/C_j),\quad \text{on}\quad B_{C_j}(\mathbf{0}),
\end{equation*}
which satisfies
\begin{equation*}
\sup_{B_{C_j}(\mathbf{0})}|\hat u_j|\leq C,\quad \text{and}\quad \sup_{B_{C_j}(\mathbf{0})}|\nabla \hat u_j|\leq C,
\end{equation*}
where the gradient  and norm are the Euclidean ones. Moreover, from the definition of these functions, it follows that
\begin{equation*}
 \partial_k \hat u_j (\mathbf{0})=C_j^{-1} \partial_k u_j(\mathbf{z}(\mathbf{p}_j)) ,\quad k=1,\cdots,n,
\end{equation*}
which yields that
\begin{equation*}
|\partial \hat u_j| (\mathbf{0})>c>0.
\end{equation*}
From Theorem \ref{thm2order}, it follows that
\begin{equation}
\sup_{B_{C_j}(\mathbf{0})}|\sqrt{-1}\partial_{\mathrm{B}}\overline{\partial}_{\mathrm{B}}\hat u_{j}|_{\gamma}\leq CC_j^{-2}\sup_M|\sqrt{-1}\partial_{\mathrm{B}}\overline{\partial}_{\mathrm{B}}u_j|_{\omega_\dag}\leq C.
\end{equation}
Thanks to the elliptic estimates for $\Delta_{\gamma}$ and the Sobolev embedding, we see that for each given compact set $K\subset \mathbb{C}^n$, each $\alpha\in (0,1)$ and $p>1$, there exists a constant $C$ such that
\begin{equation*}
\|\hat u_{j}\|_{C^{1,\alpha}(K)}+\|\hat u_{j}\|_{W^{2,p}(K)}\leq C.
\end{equation*}
This yields that there exists a subsequence of $\hat u_j$ that converges strongly in $C_{\mathrm{loc}}^{1,\alpha}(\mathbb{C}^n)$ as well as weakly in $W^{2,p}_{\mathrm{loc}}(\mathbb{C}^n)$ to a function $u\in C_{\mathrm{loc}}^{1,\alpha}(\mathbb{C}^n)\cap W^{2,p}_{\mathrm{loc}}(\mathbb{C}^n)$ with $\sup_{\mathbb{C}^n}(|u|+|\nabla u|)\leq C$ and $|\nabla u|(\mathbf{0})\geq c>0$. In particular, $u$ is non-constant.

We set
\begin{equation*}
\Phi_j:\,\mathbb{C}^n\to\mathbb{C}^n,\quad \mathbf{z}\mapsto C_j^{-1}\mathbf{z}+\mathbf{x}_j,\quad \mathbf{x}_j:=\mathbf{z}(\mathbf{p}_j).
\end{equation*}
Then we have
\begin{align*}
\hat u_j=&u_j\circ\Phi_j,\quad\text{on}\quad B_{C_j}(\mathbf{0}),\\
\gamma_j:=&C_j^2\Phi_j^*\omega_\dag\to \gamma,\quad\text{smoothly on compact set of $\mathbb{C}^n$ as}\quad j\to \infty.
\end{align*}
In particular, $\Phi_j^*\omega_\dag \to 0$ smoothly. Similarly, $\chi_j:=\Phi_j^*\beta_{\dag,j}\to 0$ smoothly. This also shows that
\begin{equation}
\label{eigenvaluematrix}
  \left|\lambda((\gamma_j)^{\bar q s}\left((\chi_j)_{r\overline{q}}+(\hat u_j)_{r\overline{q}}\right))-\lambda((\hat u_j)_{r\overline{s}})\right|\to 0,\quad\text{smoothly}.
\end{equation}
We rewrite \eqref{fequj} as
\begin{equation}
\label{fequj2}
F\left(C_j^{2}(\gamma_j)^{\bar q s}\left((\chi_j)_{r\overline{q}}+(\hat u_j)_{r\overline{q}}\right)\right)=\psi_j.
\end{equation}
We claim that $u$ is a $\Gamma$-solution.
Indeed, we first suppose that there exists a $C^2$ function $v$ such that $v\geq u$ and $v(\mathbf{z}_0)=u(\mathbf{z}_0)$ for some point $\mathbf{z}_0$. By the construction of $u$, for any $\epsilon>0$, there exists a large $N\in\mathbb{N}$ such that if $j\geq N$, then there exist $a_j,\,\mathbf{z}_j$ with $|a_j|<\epsilon,\,|\mathbf{z}_j-\mathbf{z}_0|<\epsilon$ such that
\begin{equation*}
v+\epsilon|\mathbf{z}-\mathbf{z}_0|+a_j\geq \hat u_j,\quad\text{on}\;B_1(\mathbf{z}_0),\;\text{with equality at $\mathbf{z}_j$},
\end{equation*}
and that $\lambda( (\hat u_j)_{k\overline{\ell}})$ lies in the $2\epsilon$ neighborhood of $\Gamma\supset\Gamma_n$ by \eqref{eigenvaluematrix} and \eqref{fequj2}.
This means that $v_{k\overline{\ell}}(\mathbf{z}_j)+\epsilon\delta_{k\ell}\geq (\hat u_j)_{k\overline{\ell}}(\mathbf{z}_j)$ and hence  $v_{k\overline{\ell}}(\mathbf{z}_j)+\epsilon\delta_{k\ell}$ lies in the $2\epsilon$ neighborhood of $\Gamma$, from which we can deduce that $\lambda((v_{k\overline{\ell}}(\mathbf{z}_0)))\in\overline{\Gamma}$ by letting $\epsilon\to0$.

We second suppose that $v$ is a $C^2$ function such that $v\leq u$ and $v(\mathbf{z}_0)=u(\mathbf{z}_0)$. As above, for any $\epsilon>0$, there exists $N_1\in\mathbb{N}$ sufficiently large such that for any $j>N_1$, we can find $a_j,\,\mathbf{z}_j$ with $|a_j|<\epsilon,\,|\mathbf{z}_j-\mathbf{z}_0|<\epsilon$ satisfying
\begin{equation*}
v-\epsilon|\mathbf{z}-\mathbf{z}_0|+a_j\leq \hat u_j,\quad\text{on}\;B_1(\mathbf{z}_0),\;\text{with equality at $\mathbf{z}_j$},
\end{equation*}
This yields that $\left(v_{k\overline{\ell}}(\mathbf{z}_j)-\epsilon\delta_{k\ell}\right)\leq \left((u_j)_{k\overline{\ell}}(\mathbf{z}_j)\right)$. If $\lambda\left(\left(v_{k\overline{\ell}}(\mathbf{z}_j)-3\epsilon\delta_{k\ell}\right)\right)\in \Gamma$, then
$\lambda\left(\left((u_j)_{k\overline{\ell}}(\mathbf{z}_j)\right)\right)\in \Gamma+2\epsilon\mathbf{1}$.

From \eqref{eigenvaluematrix}, it follows that
\begin{equation}
\lambda((\gamma_j)^{\bar q s}\left((\chi_j)_{r\overline{q}}+(\hat u_j)_{r\overline{q}}\right))\in \Gamma+\epsilon\mathbf{1},
\end{equation}
 and hence
\begin{equation*}
f\left(C_j^2(\gamma_j)^{\bar q s}\left((\chi_j)_{r\overline{q}}+(\hat u_j)_{r\overline{q}}\right)\right)>\sigma
\end{equation*}
for any $j>N_1$ ($N_1$ may be chosen larger if necessary), where  $\sigma\in\left(\sup_{(x,j)\in M\times\mathbb{N}^*}\psi_j ,\sup_\Gamma f\right)$ by Assertion \eqref{gaborlem91} in Lemma \ref{lem9}.
This is a contradiction to \eqref{fequj2}. Finally, we deduce that
$\lambda((v_{k\overline{\ell}}(\mathbf{z}_j)))\in \mathbb{R}^n\setminus(\Gamma+3\epsilon\mathbf{1})$
 and hence $\lambda((v_{k\overline{\ell}}(\mathbf{z}_0)))\in \mathbb{R}^n\setminus \Gamma$
by letting $\epsilon\to0.$

Now we get a non-constant Lipschitz $\Gamma$-solution $u$ since $|\nabla u|(\mathbf{0})\geq c>0$, which is a contradiction to Theorem \ref{thmgaborthm20}. This contradiction yields the desired \eqref{1stest}.
\end{proof}
\begin{proof}
  [Proof of Theorem \ref{mainthm}] In the foliated local coordinate patch $(-\varepsilon_0,\varepsilon_0)\times B_2(\mathbf{0})$,  we work in $B_2(\mathbf{0})$. Therefore, given \eqref{equ0order}, \eqref{1stest} and \eqref{equ2order}, the $C^{2,\alpha}$ estimate follows from the Evans-Krylov theory (see \cite{twwycvpde,chucvpde}).
\end{proof}
\section{Applications}\label{secapp}
In this section, we prove Corollary \ref{corjia1}, Corollary \ref{corjia2}, Corollary \ref{cor1} and Corollary \ref{cor2} by the method from \cite{gaborjdg,twjams10,twjams,twcrelle}.
\begin{proof}
[Proof of Corollary \ref{corjia1}]
We use the continuity method modified from \cite{twjams10,twjams,twcrelle}. Here for convenience we give all details and later we can be sketch. Fix a basic function $F\in C_{\mathrm{B}}^\infty(M,\mathbb{R})$  to find $(u,b)\in C_{\mathrm{B}}^\infty(M,\mathbb{R})\times \mathbb{R}$
such that
\begin{equation}
\label{twhesstcmamodify}
\left(\omega_h+\frac{1}{n-1}\left[\left(\Delta_{\mathrm{B}}u\right)\omega_\dag
-\sqrt{-1}\partial_{\mathrm{B}}\overline{\partial}_{\mathrm{B}}u \right] \right)^k\wedge\omega_\dag^{n-k}\wedge\eta=e^{F+b}
\omega_h^n\wedge\eta,
\end{equation}
with
\begin{equation}
\label{tildeomegath}
\omega_h+
\frac{1}{n-1}\left[\left(\Delta_{\mathrm{B}}u \right)\omega_\dag
-\sqrt{-1}\partial_{\mathrm{B}}\overline{\partial}_{\mathrm{B}}u  \right]>_{\mathrm{b}}0,
\quad \sup_Mu =0
\end{equation}
for transverse positive basic real $(1,1)$ forms $\omega_h$ and $\omega_\dag$ with $\mathrm{d}_{\mathrm{B}}\omega_\dag=0.$
Note the \eqref{twhesstcmamodify} is slightly different from \eqref{twhesstcma} with $e^F=e^G\frac{\omega_\dag^n\wedge \eta}{\omega_h^n\wedge \eta}.$

We consider the family of equations for $(u_t,b_t)\in C_{\mathrm{B}}^{2,\alpha}(M,\mathbb{R})\times \mathbb{R}$ with $t\in[0,1]$
\begin{equation}
\label{twhesstcmat}
\left(\omega_h+\frac{1}{n-1}\left[\left(\Delta_{\mathrm{B}}u_t\right)\omega_\dag
-\sqrt{-1}\partial_{\mathrm{B}}\overline{\partial}_{\mathrm{B}}u_t \right] \right)^k\wedge\omega_\dag^{n-k}\wedge\eta=e^{tF+b_t}
\omega_h^n\wedge\eta,
\end{equation}
with
\begin{equation}
\label{tildeomegat}
\omega_h+
\frac{1}{n-1}\left[\left(\Delta_{\mathrm{B}}u_t\right)\omega_\dag
-\sqrt{-1}\partial_{\mathrm{B}}\overline{\partial}_{\mathrm{B}}u_t \right]>_{\mathrm{b}}0,
\quad \sup_Mu_t=0.
\end{equation}
We define a set
\begin{equation*}
\mathscr{I}:=\left\{t'\in [0,1]:\exists\,(u_t,b_t)\in C_{\mathrm{B}}^{2,\alpha}(M,\mathbb{R})\times \mathbb{R}\,\text{solves}\,\eqref{twhesstcmat}\,\text{and}\,\eqref{tildeomegat}\,\text{for}\,t\in[0,t']\right\}.
\end{equation*}
Note that $0\in \mathscr{I}.$ We need prove the openness of $\mathscr{I}.$ Assume that there exists a solution of \eqref{twhesstcmat} and  \eqref{tildeomegat} for $t=\hat t$. We write
\begin{equation*}
\hat\omega:=\omega_h+
\frac{1}{n-1}\left[\left(\Delta_{\mathrm{B}}u_{\hat t}\right)\omega_\dag
-\sqrt{-1}\partial_{\mathrm{B}}\overline{\partial}_{\mathrm{B}}u_{\hat t} \right].
\end{equation*}
It is sufficient to prove that, for some $\varepsilon>0,$ there exists $(v_t,b_t)\in C_{\mathrm{B}}^{2,\alpha}(M,\mathbb{R})\times \mathbb{R}$  for $t\in [\hat t,\hat t+\varepsilon)$ solves
\begin{equation*}
\left(\hat\omega +\frac{1}{n-1}\left[\left(\Delta_{\mathrm{B}}v_t\right)\omega_\dag
-\sqrt{-1}\partial_{\mathrm{B}}\overline{\partial}_{\mathrm{B}}v_t \right] \right)^k\wedge\omega_\dag^{n-k}\wedge\eta=e^{(t-\hat t)F+b_t-b_{\hat t}}
\hat\omega ^n\wedge\eta
\end{equation*}
with
\begin{equation*}
\hat\omega+
\frac{1}{n-1}\left[\left(\Delta_{\mathrm{B}}v_t\right)\omega_\dag
-\sqrt{-1}\partial_{\mathrm{B}}\overline{\partial}_{\mathrm{B}}v_t \right]>_{\mathrm{b}}0,
\quad  v_{\hat t}=0
\end{equation*}
for some $b_t\in \mathbb{R}.$ Indeed, given such a $(v_t,b_t)$, the function $u_t:=u_{\hat t}+v_{t}$ solves \eqref{twhesstcmat} with $\sup_M u_t=0$ by adding a time-depending constant, as desired.
Consider the linear differential operator (strongly transverse elliptic \cite{elk90})
\begin{equation}
\label{deflv}
L(v)=\frac{k\left[\left(\Delta_{\mathrm{B}}v\right)\omega_\dag
-\sqrt{-1}\partial_{\mathrm{B}}\overline{\partial}_{\mathrm{B}}v\right]\wedge
\hat\omega^{k-1}\wedge\omega_\dag^{n-k}\wedge\eta}{(n-1)\hat\omega^k\wedge\omega_\dag^{n-k}\wedge\eta}=:g^{\bar j i}v_{i\bar j}.
\end{equation}
Note that $\omega:=\sqrt{-1}g_{i\bar j}\mathrm{d}z_i\wedge \mathrm{d}\bar z_j>_{\mathrm{b}}0$. We claim that there exists a smooth function $\sigma\in C_{\mathrm{B}}^\infty(M,\mathbb{R})$ such that
 \begin{equation}
 \label{gauduchonhua}
 e^\sigma g^{\bar ji}\hat\omega^k\wedge\omega_\dag^{n-k}\wedge\eta=g_{G}^{\bar ji}\omega_G^n\wedge \eta,
 \end{equation}
 where $\omega_G:=(g_G)_{i\bar j}\mathrm{d}z_i\wedge\mathrm{d}\bar z_j$ is a transverse Gauduchon metric. Indeed, it follows from \cite[Theorem 3.10]{baragliahekmati2018} that there exists a smooth basic
function $\sigma'\in C_{\mathrm{B}}^\infty(M,\mathbb{R})$ such that $(g_G)_{i\bar j}:=e^{\sigma'}g_{i\bar j}$ is a transverse Gauduchon metric. Then set $\sigma:=-\sigma'+\log\frac{\omega_G^n\wedge\eta}{\hat\omega^k\wedge\omega_\dag^{n-k}\wedge \eta},$ as desired.
 Up to adding a constant to $\sigma$, we assume that
\begin{equation}
\label{sigmatiaojian}
\int_Me^\sigma\hat\omega^{k}\wedge\omega_\dag^{n-k}\wedge\eta=1.
\end{equation}
We can find $v_t\in C_{\mathrm{B}}^{2,\alpha}(M,\mathbb{R})$ for $t\in[\hat t,\hat t+\varepsilon)$ such that \begin{align*}
&\left(\hat\omega +\frac{1}{n-1}\left[\left(\Delta_{\mathrm{B}}v_t\right)\omega_\dag
-\sqrt{-1}\partial_{\mathrm{B}}\overline{\partial}_{\mathrm{B}}v_t \right] \right)^k\wedge\omega_\dag^{n-k}\wedge\eta\\
=&e^{(t-\hat t)F+c_t}\hat\omega^n\wedge\eta\left(\int_Me^\sigma \left(\hat\omega +\frac{1}{n-1}\left[\left(\Delta_{\mathrm{B}}v_t\right)\omega_\dag
-\sqrt{-1}\partial_{\mathrm{B}}\overline{\partial}_{\mathrm{B}}v \right] \right)^k\wedge\omega_\dag^{n-k}\wedge\eta\right),
\end{align*}
where $c_t\in \mathbb{R}$ such that
 $$\int_Me^{(t-\hat t)F+c_t}\hat\omega^n\wedge\eta=1.$$
We set
\begin{align*}
\mathscr{A}_1
:=&\left\{v\in C_{\mathrm{B}}^{2,\alpha}(M,\mathbb{R}):\;\int_Mve^\sigma\hat\omega^k\wedge\omega_\dag^{n-k}\wedge\eta=0,\,
\hat\omega +\frac{1}{n-1}\left[\left(\Delta_{\mathrm{B}}v\right)
-\sqrt{-1}\partial_{\mathrm{B}}\overline{\partial}_{\mathrm{B}}v \right] >_{\mathrm{b}}0\right\},\\
\mathscr{A}_2
:=&\left\{w\in C_{\mathrm{B}}^{\alpha}(M,\mathbb{R}):\;\int_Me^we^\sigma\hat\omega^k\wedge\omega_\dag^{n-k}\wedge\eta=1\right\}.
\end{align*}
Then we define $\Phi:\,\mathscr{A}_1\to\mathscr{A}_2$ by
\begin{align*}
\Phi(v):=&\log\frac{\left(\hat\omega +\frac{1}{n-1}\left[\left(\Delta_{\mathrm{B}}v\right)\omega_\dag
-\sqrt{-1}\partial_{\mathrm{B}}\overline{\partial}_{\mathrm{B}}v \right] \right)^k\wedge\omega_\dag^{n-k}\wedge\eta}{\hat\omega^n\wedge\eta}\\
&-\log\left(\int_Me^\sigma \left(\hat\omega +\frac{1}{n-1}\left[\left(\Delta_{\mathrm{B}}v\right)\omega_\dag
-\sqrt{-1}\partial_{\mathrm{B}}\overline{\partial}_{\mathrm{B}}v \right] \right)^k\wedge\omega_\dag^{n-k}\wedge\eta\right).
\end{align*}
We need find $v_t\in C_{\mathrm{B}}^{2,\alpha}(M,\mathbb{R})$ with $\Phi(v_t)=(t-\hat t)F+c_t.$
Since $\Phi(0)=0$, the inverse function theorem yields that it suffices to prove the invertibility of
$$
(D\Phi)_0:\;T_0\mathscr{A}_1\to T_0\mathscr{A}_2,
$$
where
\begin{align*}
T_0\mathscr{A}_1
=&\left\{\zeta\in C_{\mathrm{B}}^{2,\alpha}(M,\mathbb{R}):\;\int_M\zeta e^\sigma\hat\omega^k\wedge\omega_\dag^{n-k}\wedge\eta=0\right\},\\
T_0\mathscr{A}_2
=&\left\{g\in C_{\mathrm{B}}^{ \alpha}(M,\mathbb{R}):\;\int_Mge^\sigma\hat\omega^k\wedge\omega_\dag^{n-k}\wedge\eta=0\right\}
\end{align*}
denote the tangent spaces of $\mathscr{A}_1$ and $\mathscr{A}_2$ at $0.$  We have
\begin{equation*}
(D\Phi)_0(\zeta)=g^{\bar ji}\zeta_{i\bar j}-\int_Me^\sigma\left(g^{\bar ji}\zeta_{i\bar j}\right)\hat\omega^k\wedge\omega_\dag^{n-k}\wedge\eta=g^{\bar ji}\zeta_{i\bar j},
\end{equation*}
where for the first equality we use \eqref{deflv} and \eqref{sigmatiaojian}, and for the second equality we use \eqref{gauduchonhua}
to get
$$
\int_Me^\sigma\left(g^{\bar ji}\zeta_{i\bar j}\right)\hat\omega^k\wedge\omega_\dag^{n-k}\wedge\eta
=\int_M\left((g_G)^{\bar ji}\zeta_{i\bar j}\right)\omega_G^{n}\wedge\eta
=\int_Mn\sqrt{-1}\partial_{\mathrm{B}}\overline{\partial}_{\mathrm{B}}\zeta\wedge\omega_G^{n-1}\wedge\eta=0.
$$
It follows from the strong maximum principle that $(D\Phi)_0$ is injective.  To show that it is surjective, take $g\in T_0\mathscr{A}_2.$ The let $\zeta$ solve
\begin{equation*}
(g_G)^{\bar ji}\zeta_{i\bar j}=ge^\sigma\frac{\hat\omega^k\wedge\omega_\dag^{n-k}\wedge\eta}{\omega_G^n\wedge\eta},\quad
\int_M \zeta e^\sigma e^\sigma\hat\omega^k\wedge\omega_\dag^{n-k}\wedge\eta=0.
\end{equation*}
Such solution $\zeta$ exists since there holds $\int_Mge^\sigma\frac{\hat\omega^k\wedge\omega_\dag^{n-k}\wedge\eta}{\omega_G^n\wedge\eta}
\omega_G^n\wedge\eta=\int_Mge^\sigma\hat\omega^k\wedge\omega_\dag^{n-k}\wedge\eta=0$ and $\omega_G$ is a transverse Gauduchon metric. This, together with \eqref{gauduchonhua}, yields that
\begin{equation*}
\frac{\omega_G^n\wedge\eta}{\hat\omega^k\wedge\omega_\dag^{n-k}\wedge\eta}e^{-\sigma}
\left((g_G)^{\bar ji}\zeta_{i\bar j}\right)=g^{\bar ji}\zeta_{i\bar j}=(D\Phi)_0(\zeta)=g,
\end{equation*}
as desired. This establishes the openness of $\mathscr{I}.$

To show the closeness of $\mathscr{I},$ we need a priori estimate of $(u_t,b_t).$ The uniform bound of $b_t$ follows from the maximum principle. To get the uniform estimate of $u_t,$ we need write \eqref{twhesstcma} in the form of \eqref{equ1}. For this aim, we set
\begin{equation}
\beta_{\dag}=(\tr_{\omega_\dag}\omega_h)\omega_\dag-(n-1)\omega_h.
\end{equation}
Let $A_{\dag,u}$ be the transverse Hermitian endomorphism of $\nu(\mathcal{F}_\xi)$ with respect to $\omega_\dag$ defined by $\beta_{\dag,u},$ and let
\begin{align*}
T_k(\lambda(A_{\dag,u}))=&\sum_{\ell\not=k}\lambda_\ell(A_{\dag,u}),\quad 1\leq k\leq n,\\
\mathbf{T} (\lambda(A_{\dag,u}))=&(T_1(\lambda(A_{\dag,u})),\cdots,T_n(\lambda(A_{\dag,u}))).
\end{align*}
Then \eqref{twhesstcma} is rewritten as
\begin{equation}
f(\lambda(A_{\dag,u}))=\log\sigma_k(\mathbf{T} (\lambda(A_{\dag,u})))=G+b,
\end{equation}
where we recall that $\sigma_k$ is the $k^{\mathrm{th}}$ elementary symmetric polynomial defined on $\mathbb{R}^n.$ If $\omega_h$ is a transverse $k$-positive basic real $(1,1)$ form, then $f$ is defined on $\Gamma:=\mathbf{T}^{-1}(\Gamma_k)$ and $0$ is a transverse $\mathcal{C}$-subsolution.
Hence a priori estimates of $u_t$ follows from Theorem \ref{mainthm} and a bootstrapping argument. The solution to \eqref{twhesstcmat} and \eqref{tildeomegat} at $t=1$ solves \eqref{twhesstcma}.

Uniqueness of the solution $(u,b)\in C_{\mathrm{B}}^\infty(M,\mathbb{R})\times\mathbb{R}$ to \eqref{twhesstcmat} and \eqref{tildeomegat}. Suppose that $(u,b),\,(u',b')\in C_{\mathrm{B}}^\infty(M,\mathbb{R})\times\mathbb{R}$ solve \eqref{twhesstcmat} and \eqref{tildeomegat}. Write
\begin{align*}
\tilde\omega_\dag:=&\omega_h+
\frac{1}{n-1}\left[\left(\Delta_{\mathrm{B}}u\right)\omega_\dag
-\sqrt{-1}\partial_{\mathrm{B}}\overline{\partial}_{\mathrm{B}}u \right],\\
\tilde\omega_\dag':=&\omega_h+
\frac{1}{n-1}\left[\left(\Delta_{\mathrm{B}}u'\right)\omega_\dag
-\sqrt{-1}\partial_{\mathrm{B}}\overline{\partial}_{\mathrm{B}}u' \right].
\end{align*}
Then we have
$$
\frac{\left(\tilde\omega_\dag +
\frac{1}{n-1}\left[\left(\Delta_{\mathrm{B}}(u'-u)\right)\omega_\dag
-\sqrt{-1}\partial_{\mathrm{B}}\overline{\partial}_{\mathrm{B}}(u'-u) \right]\right)^k\wedge\omega_\dag^{n-k}\wedge\eta}{\tilde\omega_\dag^k\wedge\omega_\dag^{n-k}\wedge\eta}
=e^{b'-b}.
$$
Considering the points where $u'-u$ attains a maximum and minimum, we get $b=b'$ and hence
there holds
\begin{equation*}
\left(\tilde\omega_\dag +
\frac{1}{n-1}\left[\left(\Delta_{\mathrm{B}}(u'-u)\right)\omega_\dag
-\sqrt{-1}\partial_{\mathrm{B}}\overline{\partial}_{\mathrm{B}}(u'-u) \right]\right)^k\wedge\omega_\dag^{n-k}\wedge\eta
=\tilde\omega_\dag^k\wedge\omega_\dag^{n-k}\wedge\eta,
\end{equation*}
i.e.,
\begin{equation*}
\left[\left(\Delta_{\mathrm{B}}(u'-u)\right)\omega_\dag
-\sqrt{-1}\partial_{\mathrm{B}}\overline{\partial}_{\mathrm{B}}(u'-u) \right]
\wedge\sum_{i=0}^{k-1}\tilde\omega_\dag^i\wedge(\tilde\omega_\dag')^{k-1-i}\wedge\omega_\dag^{n-k}\wedge\eta=0.
\end{equation*}
Since $\sup_Mu=\sup_Mu'=0,$  the strong maximum principle yields that $u\equiv u',$ as desired.
\end{proof}
\begin{proof}
[Proof of  Corollary \ref{corjia2}] From Lemma \ref{lemjia} and \eqref{formula1}, we deduce that there exists a basic real $(1,1)$ form $\omega_0>_{\mathrm{b}}0$ such that
\begin{equation}
\label{tformula1}
\frac{1}{(n-1)!}\ast_\dag\left(\omega_0^{n-1}+\sqrt{-1}\partial_{\mathrm{B}}\overline{\partial}_{\mathrm{B}}\wedge\omega_\dag^{n-2}\right)
=\omega_h+\frac{1}{n-1}\left[\left(\Delta_{\mathrm{B}}u\right)\omega_\dag
-\sqrt{-1}\partial_{\mathrm{B}}\overline{\partial}_{\mathrm{B}}u \right].
\end{equation}
It follows from \eqref{twhesstcma} with $k=n$, \eqref{detchin-1}, \eqref{astdet}, \eqref{tformula1} that $\tilde \omega_u$ given by \eqref{twu}
\begin{equation}
\label{tformula2}
 \tilde\omega_u^n\wedge\eta=e^{
\frac{G+b}{n-1}}\omega_\dag^n\wedge\eta.
\end{equation}
Given $\Psi_\dag\in c_1(\nu(\mathcal{F}_\xi)),$ the basic $\partial_{\mathrm{B}}\overline{\partial}_{\mathrm{B}}$-lemma (see \cite{sasakigeo}) yields that there exists a basic function $G\in C_{\mathrm{B}}^\infty(M,\mathbb{R})$ such that
\begin{equation*}
\ric(\omega_\dag)-\frac{\sqrt{-1}}{n-1}\partial_{\mathrm{B}}\overline{\partial}_{\mathrm{B}}G=\Psi,
\end{equation*}
which, together with taking $-\sqrt{-1}\partial_{\mathrm{B}}\overline{\partial}_{\mathrm{B}}$ on both sides of \eqref{tformula2}, yields Corollary \ref{corjia2}.
\end{proof}
\begin{proof}
[Proof of Corollary \ref{cor1}]
The conclusion follows form the argument in Corollary \ref{corjia1} by replacing
$\omega_0+\frac{1}{n-1}\left[\left(\Delta_{\mathrm{B}}u\right)\omega_\dag
-\sqrt{-1}\partial_{\mathrm{B}}\overline{\partial}_{\mathrm{B}}u \right] $ with $\omega_h+\sqrt{-1}\partial_{\mathrm{B}}\overline{\partial}_{\mathrm{B}}u,$
and replacing $\sigma_k(\mathbf{T}_k(\lambda))$ with $\sigma_k(\lambda)$ wherever they occur.
\end{proof}
\begin{proof}
[Proof of Corollary \ref{cor2}]
We rewrite \eqref{thessquotient} in the form $F(A_\dag)=f(\lambda)=-c$, where $F$ is given by
\begin{equation*}
f(\lambda):=-\frac{\binom{n}{\ell}^{-1}\sigma_\ell(\lambda)}{\binom{n}{k}^{-1}\sigma_k(\lambda)},
\end{equation*}
which is concave and $\frac{\partial f}{\partial \lambda_i}>0$ for $1\leq i\leq n$ (see \cite{spruck,gaborjdg}).

We solve the family of equations
\begin{equation}
\label{thessquotientt}
t\frac{\left(\omega_h+\sqrt{-1}\partial_{\mathrm{B}}\overline{\partial}_{\mathrm{B}}u_t\right)^\ell\wedge\omega_\dag^{n-\ell}\wedge\eta}
{\left(\omega_h+\sqrt{-1}\partial_{\mathrm{B}}\overline{\partial}_{\mathrm{B}}u_t\right)^k\wedge\omega_\dag^{n-k}\wedge\eta}
+(1-t)
\frac{\omega_\dag^n\wedge\eta}
{\left(\omega_h+\sqrt{-1}\partial_{\mathrm{B}}\overline{\partial}_{\mathrm{B}}u_t\right)^k\wedge\omega_\dag^{n-k}\wedge\eta}
=c_t,
\end{equation}
for $t\in[0,1]$ and $c_1=c$.

We set
\begin{equation*}
\mathcal{T}:=\left\{t\in[0,1]:\;\text{Equation \eqref{thessquotientt} has a solution $u_t$ at time}\,t\right\}.
\end{equation*}
It follows from Corollary \ref{cor1} that $0\in \mathcal{T}$.

For the openness, the linearized operator $\tilde L$ of \eqref{thessquotientt} at time $t$ is given by
\begin{align*}
\tilde L(w)=&-t\frac{\sqrt{-1}\partial_{\mathrm{B}}\overline{\partial}_{\mathrm{B}}w\wedge\left(\omega_h+\sqrt{-1}\partial_{\mathrm{B}}\overline{\partial}_{\mathrm{B}}u_t\right)^{\ell-1}\wedge\omega_\dag^{n-\ell}\wedge\eta}
{\left(\omega_h+\sqrt{-1}\partial_{\mathrm{B}}\overline{\partial}_{\mathrm{B}}u_t\right)^k\wedge\omega_\dag^{n-k}\wedge\eta}\\
&-c_t\frac{\sqrt{-1}\partial_{\mathrm{B}}\overline{\partial}_{\mathrm{B}}w\wedge\left(\omega_h+\sqrt{-1}\partial_{\mathrm{B}}\overline{\partial}_{\mathrm{B}}u_t\right)^{k-1}\wedge\omega_\dag^{n-k}\wedge\eta}
{\left(\omega_h+\sqrt{-1}\partial_{\mathrm{B}}\overline{\partial}_{\mathrm{B}}u_t\right)^k\wedge\omega_\dag^{n-k}\wedge\eta},\quad \forall\;w\in C_{\mathrm{B}}^\infty(M,\mathbb{R}).
\end{align*}
Note that $-\tilde L$ is strongly  transverse  elliptic operator. It follows from   $\mathrm{d}_{\mathrm{B}}\omega_h=0$ that $-\tilde L$ is self-adjoint with respect to the volume form $\left(\omega_h+\sqrt{-1}\partial_{\mathrm{B}}\overline{\partial}_{\mathrm{B}}u_t\right)^k\wedge\omega_\dag^{n-k}\wedge\eta$. Since \begin{equation*}
  C_{\mathrm{B}}^{2,\alpha}(M,\mathbb{R})\times\mathbb{R}\to C_{\mathrm{B}}^{ \alpha}(M,\mathbb{R}),\quad (u,c)\mapsto -\tilde L(u)+c,
\end{equation*}
is surjective, the
 openness of $\mathcal{T}$ follows.

By integration on both sides of \eqref{thessquotientt} with respect to the volume $\left(\omega_h+\sqrt{-1}\partial_{\mathrm{B}}\overline{\partial}_{\mathrm{B}}u_t\right)^k\wedge\omega_\dag^{n-k}\wedge\eta,$
we get $c_t\geq tc$ for any $t\in[0,1]$. We rewrite \eqref{thessquotientt} as
\begin{equation*}
f_t(\lambda)=
-t\frac{\binom{n}{\ell}^{-1}\sigma_\ell(\lambda)}{\binom{n}{k}^{-1}\sigma_k(\lambda)}
-(1-t)\frac{1}{\binom{n}{k}^{-1}\sigma_k(\lambda)}=-c_t.
\end{equation*}
We claim that $\underline{u}=0$ is a transverse $\mathcal{C}$-subsolution. Indeed, let $\mu$ be the $n$-tuple of eigenvalues of $\lambda\left(\omega_h\right)$ with respect to $g_\dag$, and it is sufficient to check that for any $(n-1)$-tuple $\mu'$ of $\mu$, there holds
\begin{equation*}
\lim_{s\to+\infty}f_t((\mu',s))>-c_t,
\end{equation*}
which is equivalent to
\begin{equation*}
-t\frac{\binom{n}{\ell}^{-1}\sigma_{\ell-1}(\mu')}{\binom{n}{k}^{-1}\sigma_{k-1}(\mu')}>-c_t,
\end{equation*}
i.e.,
\begin{equation}
\label{tiaojian1}
nc_t\binom{n}{k}^{-1}\sigma_{k-1}(\mu')-nt\binom{n}{\ell}^{-1}\sigma_{\ell-1}(\mu') >0,
\end{equation}
by the argument for \eqref{conbd}.

At any given point, we choose the foliated local chart $(-\varepsilon_0,\varepsilon_0)\times B_2(\mathbf{0})$ such that $(g_\dag)_{i\bar j}=\delta_{ij}$ and $\omega_h$ is diagonal. Then we write the basic $(n-1,n-1)$ form
\begin{equation}
\label{tiaojian2}
kc_t\omega_h^{k-1}\wedge\omega_\dag^{n-k}-t\ell\omega_h^{\ell-1}\wedge\omega_\dag^{n-\ell}
\end{equation}
in the form of \eqref{tn-1}, and easily see that the left side of \eqref{tiaojian1} is the eigenvalues of the coefficient matrix of  the basic $(n-1,n-1)$ form given by \eqref{tiaojian2}.  This yields that \eqref{tiaojian1} is equivalent to the fact that is a strictly transverse positive $(n-1,n-1)$ form (cf.  \cite{songweinkove2008,gaborjdg}), where we recall that the transverse positivity is independent of the choice of local frames (cf. the concepts of positivity in \cite[Chapter \uppercase\expandafter{\romannumeral3}]{demaillybook1}).  Since $\omega_h$ is strictly transverse $k$-positive and $c_t\geq tc$, \eqref{tiaojian2} follows from \eqref{thessquotientcon}.  Then Theorem \ref{mainthm} gives uniform a priori estimates for any $t$ in the compact interval $[\epsilon_0,1]$ for any small $\epsilon_0>0$, as desired.
\end{proof}
It is a meaningful problem to find geometric conditions to ensure the existence of transverse $\mathcal{C}$-subsolution. For this aim, let us recall some concepts (see for example \cite{baragliahekmati2018,chl2018}). A local basic function $u\in C_{\mathrm{B}}^{\infty}(U,\mathbb{C})$ is called basic holomorphic if $\bar\partial_{\mathrm{B}}u=0.$ We denote by $\mathscr{O}_M$ the germ sheaf of local basic holomorphic functions. A subset $V$ of $M$ is called transverse analytic subvariety if for each point $\mathbf{p}\in V$ there exists local basic holomorphic functions $f_1,\cdots,f_r$ on $U$ such that $V\cap U=\{f_1=\cdots=f_r=0\}.$ All the local properties in complex analytic geometry can be extended to the setting of transverse analytic subvarieties.
Motivated by \cite{lejmiszekelyhidi2015,gaborjdg}, we hope the positive answer to the following
\begin{question}
\label{question}
Let $(M,\phi,\xi,\eta,g)$ be a compact Sasakian manifold with $\dim_{\mathbb{R}}M=2n+1$ $($$n\geq 2$$)$ and $\omega_\dag=\frac{1}{2}\mathrm{d}\eta=g(\phi\cdot,\cdot)$ as its transverse K\"ahler form, and let $\omega_h$ be a   closed strictly transverse $k$-positive basic $(1,1)$ form. Then we can find a strictly transverse $k$-positive basic real $(1,1)$ form $\omega_h'\in [\omega_h]\in H^{1,1}_{\mathrm{B}}(M,\mathbb{R})$ such that the real basic $(n-1,n-1)$ form given in \eqref{thessquotientcon} with $\omega_h'$ instead of $\omega_h$ if and only if for all transverse analytic subvarieties $V\subset M$ of dimension $p=n-\ell,\cdots,n-1$ we have
\begin{equation}
\int_V c\frac{k!}{(k-n+p)!}\omega_h^{k-n+p}\wedge\omega_\dag^{n-k}\wedge\eta
-\frac{\ell !}{(\ell-n+p)!}\omega_h^{\ell-n+p}\wedge\omega_\dag^{n-\ell}\wedge\eta>0.
\end{equation}
\end{question}
It seems that we can modify the method in \cite{collinszekelyhidijdg2017} to answer the question in the case $k=n,\ell=n-1$ on toric Sasakian manifolds.
\section{On the Transverse Complex Geometry}\label{secfoliation}
In this section, we point out that our argument works on a compact oriented, taut, transverse foliated manifold with complex codimension $n.$
Let us recall some preliminaries from \cite{baragliahekmati2018} quickly, and for a more detailed explanation we refer the reader to their original paper (cf.\cite{ton97}).

Let $M$ be a smooth manifold with $\dim_{\mathbb{R}}M=n+r$ and a foliation $\mathcal{F}$ of dimension $r.$ We denote by $L:=T\mathcal{F}$ the tangent distribution of the foliation and by $Q=TM/L$ the normal bundle. If $L$ is oriented,  then the foliation $\mathcal{F}$ is said to be tangentially oriented.

Let $g$ be a Riemannian metric on $M.$ Then $TM$ splits orthogonally as $TM=L\oplus L^{\bot}$ and there exists a smooth bundle isomorphism $\sigma:Q\to L^{\bot}$ splitting the exact sequence
\begin{equation*}
0\to L\to TM\to Q\to0,
\end{equation*}
i.e., satisfying $\pi\circ\sigma=\mathrm{Id}.$ The metric $g$ is a direct sum $g=g_L\oplus g_{L^{\bot}}.$ With
$g_Q:=\sigma^*g_{L^{\bot}},$ the splitting map $\sigma:(Q,g_Q)\to (L^\bot,g_{L^\bot})$ is a metric isomorphism. If $\mathcal{L}_\xi g_Q=0$ for any $\xi\in \Gamma(L),$ then $g$ is called bundle-like metric and $g_Q$ is said to be holonomy invariant (see \cite[Chapter 5]{ton97}).

A foliation $\mathcal{F}$ is said to be taut if $M$ admits a Riemannian metric $g$ such that every leaf of $\mathcal{F}$ is a minimal submanifold (e.g., the  K-contact manifold). We denote by $\chi_{\mathcal{F}}$ the canonical volume form associated to $g_L$ (i.e., the characteristic form of $\mathcal{F},$ see \cite{ton97}).
For this $\chi_{\mathcal{F}},$ we have  (see  \cite{rummler1979} or \cite[Formula(4.26)]{ton97}),
\begin{equation}
\label{dchif}
\mathrm{d}\chi_{\mathcal{F}}=\kappa \wedge \chi_{\mathcal{F}}+\varphi_0=\varphi_0,
\end{equation}
$\kappa$ is the mean curvature form associated to $g$ (see \cite[Formula (3.20)]{ton97}) and $\varphi_0\in\bigwedge^{r+1}$ satisfies
\begin{equation*}
\iota_{\xi_1}\cdots\iota_{\xi_r}\varphi_0=0,\quad \forall\,\xi_1,\cdots,\xi_r\in\Gamma(L).
\end{equation*}
\begin{lem}
[Basic Stokes' theorem \cite{baragliahekmati2018}]
Let $(M^{n+r},\mathcal{F})$ be a closed oriented manifold with a taut foliation $\mathcal{F}$ and
$\dim_{\mathbb{R}}\mathcal{F}=r.$  Then we have
\begin{equation}
\label{bst}
\int_M(\mathrm{d}_{\mathrm{B}}\varphi)\wedge\chi_{\mathcal{F}}=0,\quad \forall\;\varphi\in\Omega_{\mathrm{B}}^{n-1},
\end{equation}
where $\chi_{\mathcal{F}}$ satisfies \eqref{dchif}.
\end{lem}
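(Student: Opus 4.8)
The plan is to deduce the identity from the ordinary Stokes' theorem on the closed manifold $M$, applied to the $(n+r-1)$-form $\varphi\wedge\chi_{\mathcal{F}}$, combined with the Leibniz rule, the structure equation \eqref{dchif}, and a short degree-counting argument. First I would observe that since $\varphi\in\Omega_{\mathrm{B}}^{n-1}$ and the exterior derivative preserves basic forms, one has $\mathrm{d}_{\mathrm{B}}\varphi=\mathrm{d}\varphi$; so it suffices to prove $\int_M(\mathrm{d}\varphi)\wedge\chi_{\mathcal{F}}=0$. As $\varphi\wedge\chi_{\mathcal{F}}$ has degree $(n-1)+r=n+r-1$ on the closed oriented $(n+r)$-manifold $M$, Stokes' theorem gives $\int_M\mathrm{d}(\varphi\wedge\chi_{\mathcal{F}})=0$, and expanding by the Leibniz rule yields
\begin{equation*}
0=\int_M\mathrm{d}(\varphi\wedge\chi_{\mathcal{F}})=\int_M(\mathrm{d}\varphi)\wedge\chi_{\mathcal{F}}+(-1)^{n-1}\int_M\varphi\wedge\mathrm{d}\chi_{\mathcal{F}}.
\end{equation*}
By \eqref{dchif} we have $\mathrm{d}\chi_{\mathcal{F}}=\varphi_0$, so the statement reduces to showing $\int_M\varphi\wedge\varphi_0=0$; in fact I would argue the integrand vanishes pointwise, $\varphi\wedge\varphi_0\equiv0$.

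To see this I would fix, near an arbitrary point, an adapted coframe $\theta^1,\dots,\theta^r,\omega^1,\dots,\omega^n$ in which $\theta^1,\dots,\theta^r$ annihilate $L^{\bot}$ (the ``leaf directions'') and $\omega^1,\dots,\omega^n$ annihilate $L$. Because $\varphi$ is basic, $\iota_Y\varphi=0$ for all $Y\in\Gamma(L)$, so in this coframe $\varphi$ is a combination of wedges of the $\omega^j$ alone and carries no $\theta$-leg. On the other hand, the defining property $\iota_{\xi_1}\cdots\iota_{\xi_r}\varphi_0=0$ for all $\xi_1,\dots,\xi_r\in\Gamma(L)$ says precisely that the component of the $(r+1)$-form $\varphi_0$ proportional to $\theta^1\wedge\cdots\wedge\theta^r\wedge\omega^j$ vanishes for each $j$; equivalently, every monomial appearing in $\varphi_0$ involves at most $r-1$ of the $\theta$'s. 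Counting $\theta$-legs in $\varphi\wedge\varphi_0$ then shows it contains at most $r-1<r$ of them, so it cannot realize the top form $\theta^1\wedge\cdots\wedge\theta^r\wedge\omega^1\wedge\cdots\wedge\omega^n$ and must vanish identically. Substituting $\varphi\wedge\varphi_0\equiv0$ into the displayed identity gives $\int_M(\mathrm{d}_{\mathrm{B}}\varphi)\wedge\chi_{\mathcal{F}}=0$.

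The only point requiring care is the bookkeeping in the last step, namely making precise that the contraction condition caps the number of leaf-legs of $\varphi_0$ at $r-1$ while $\varphi$ has none; but this is a routine linear-algebra verification once an adapted coframe is chosen, and the hypothesis of tautness is used only through \eqref{dchif}, which is taken as given. I do not anticipate any genuine obstacle beyond this formal check.
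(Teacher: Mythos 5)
Your argument is correct and complete: Stokes' theorem on the closed manifold $M$ applied to $\varphi\wedge\chi_{\mathcal{F}}$, the Leibniz rule, the Rummler-type identity \eqref{dchif}, and the pointwise vanishing $\varphi\wedge\varphi_0\equiv0$ by counting leaf-legs in an adapted coframe (a basic $\varphi$ carries none, while the contraction condition forces each monomial of the $(r+1)$-form $\varphi_0$ to carry at most $r-1$, so the top-degree product would need more than $n$ transverse legs). The paper itself gives no proof but simply cites \cite[Proposition 2.1]{baragliahekmati2018}, and your argument is precisely the standard one behind that citation, so there is nothing to correct.
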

\begin{proof}
See \cite[Proposition 2.1]{baragliahekmati2018}.
\end{proof}

Let $M$ be a foliated manifold with $\dim_{\mathbb{R}}M=2m+r$ such that the foliation $\mathcal{F}$ has real codimension $2n,$ and let $L=T\mathcal{F}$ be the tangent distributions to the foliation and $Q=TM/L$ the normal bundle. A transverse almost complex structure is an endomorphism $I:\,Q\to Q$ such that $I^2=-\mathrm{Id}.$
The transverse almost complex structure is called integrable if $M$ can be covered by foliated charts $U_\alpha=V_\alpha\times W_\alpha \subset \mathbb{R}^r\times \mathbb{C}^n,$  where $I_{\upharpoonright U_\alpha}$ is the natural complex structure on $\mathbb{C}^n.$

A transverse Hermitian structure on a Riemannian foliated manifold $M$ is a pair $(g,I)$, where $g$ is the bundle-like metric and $I$ is a transverse integrable complex structure such that $g(IY_1,IY_2)=g(Y_1,Y_2)$ for all $Y_1,\,Y_2\in \Gamma(Q).$ We define a real basic $(1,1)$ form $\omega_\dag(Y_1,Y_2)=g(IY_1,Y_2)$ for all $Y_1,\,Y_2\in \Gamma(Q).$

Replacing $L_\xi,\eta,\mathcal{F}_\xi,\nu(\mathcal{F}_\xi)$ by $L,\chi_{\mathcal{F}},\mathcal{F},Q$ respectively,  all the concepts on Sasakian manifolds, such as basic forms, basic exterior differential, transverse Hodge star operator, foliated holomorphic vector bundle and adapted Chern connection, basic Chern form/classes, also hold here. Note that
$\mathrm{d}_{\mathrm{B}}=\partial_{\mathrm{B}}+\bar\partial_{\mathrm{B}}$ on transverse complex manifolds. Also we can propose transverse fully nonlinear equation defined by \eqref{equ1} with $f$ satisfying Assumptions \eqref{assum1}, \eqref{assum2} and \eqref{assum3} in Section \ref{secintroduction}, and the transverse $\mathcal{C}$-subsolution and transverse admissible subsolution of \eqref{equ1} are the same.

We define  the \emph{basic Aeppli cohomology group} (cf. \cite{twcrelle})
$$
H_{\mathrm{A,b}}^{n-1,n-1}(M,Q):=\frac{\{\partial_{\mathrm{B}}\bar\partial_{\mathrm{B}}{\textrm-}\text{closed real basic }(n-1,n-1)\text{ forms}\}}{\left\{\partial_{\mathrm{B}}\gamma+\overline{\partial_{\mathrm{B}} \gamma}:\;\gamma \,\in\,\bigwedge_{\mathrm{B}} ^{n-2,n-1}(M)\right\}}.
$$
The basic Stokes' theorem \cite[Proposition 2.1]{baragliahekmati2018} (see \eqref{bst}) yields that this space is naturally in duality with the finite dimensional \emph{basic Bott-Chern cohomology group} with the nondegenerated pairing
$$
H_{\mathrm{A,b}}^{n-1,n-1}(M,Q)\otimes H_{\mathrm{BC,b}}^{1,1}(M,Q)\longrightarrow \mathbb{R}
$$
given by
$$
\left([\alpha]_{\mathrm{A,b}},[\beta]_{\mathrm{BC,b}}\right)\mapsto \int_M\alpha\wedge\beta\wedge\chi_{\mathcal{F}}.
$$
For any $u\in C_{\mathrm{B}}^{\infty}(M,\mathbb{R})$, we set
$$
\gamma:=\frac{\sqrt{-1}}{2}\bar\partial_{\mathrm{B}} u\wedge\chi^{n-2},
$$
where $\chi$ is a real basic $(1,1)$ form. Then we have
\begin{align}\label{betau}
\beta_{\dag,u}:=\partial_{\mathrm{B}}\gamma+\overline{\partial_{\mathrm{B}} \gamma}=\sqrt{-1}\partial_{\mathrm{B}}\bar\partial_{\mathrm{B}} u\wedge \chi^{n-2}+\Re\left(\sqrt{-1}\partial_{\mathrm{B}} u\wedge\bar\partial_{\mathrm{B}}(\chi^{n-2})\right).
\end{align}
$\beta_{\dag,u}$ is $\partial_{\mathrm{B}}\bar\partial_{\mathrm{B}}$-closed and $\bar\partial_{\mathrm{B}}\beta_{\dag,u}$ is $\partial_{\mathrm{B}}$-exact. Indeed, it is the $(n-1,n-1)$ part of the $\mathrm{d}_{\mathrm{B}}$-exact $(2n-2)$ form $\mathrm{d}_{\mathrm{B}} \left(\mathrm{d}_{\mathrm{B}}^cu\wedge\chi^{n-2}\right)$, where $$\mathrm{d}_{\mathrm{B}}^c=\frac{\sqrt{-1}}{2}(\bar\partial_{\mathrm{B}}-\partial_{\mathrm{B}})$$ with $\mathrm{d}_{\mathrm{B}}\mathrm{d}_{\mathrm{B}}^c=\sqrt{-1}\partial_{\mathrm{B}}\bar\partial_{\mathrm{B}}$.
Let $\alpha'$ and $\alpha''$ be strongly positive basic $(1,1)$ forms on $M$. Then
\begin{align*}
\sqrt{-1}\partial_{\mathrm{B}}\bar\partial_{\mathrm{B}} \left(\log\frac{\alpha'^n\wedge\chi_{\mathcal{F}}}{\alpha''^n\wedge\chi_{\mathcal{F}}}\right)\wedge \chi^{n-2}+\Re\left[\sqrt{-1}\partial_{\mathrm{B}}\left(\log\frac{\alpha'^n\wedge\chi_{\mathcal{F}}}
{\alpha''^n\wedge\chi_{\mathcal{F}}}\right)\wedge\bar\partial_{\mathrm{B}}(\chi^{n-2})\right],
\end{align*}
is well-defined  $\partial_{\mathrm{B}}\bar\partial_{\mathrm{B}}$-closed since
$$
\log\frac{\alpha'^n\wedge\chi_{\mathcal{F}}}{\alpha''^n\wedge\chi_{\mathcal{F}}}\in C_{\mathrm{B}}^{\infty}(M,\mathbb{R}).
$$
Let $(M^{2n+r},\mathcal{F},g,I)$ be a compact oriented, taut, transverse Hermitian foliated manifold, where $\mathcal{F}$ is the foliation with complex codimension $n$ and $g$ is the bundle-like metric. Then we split $g$ as $g=g_L\oplus g_{L^\bot}$ where $L=T\mathcal{F}$ and denote by $\chi_{\mathcal{F}}$ be the characteristic form satisfying \eqref{dchif}. We assume that $\omega_\dag=g(I\cdot,\cdot)$ is a transverse Gauduchon metric without loss of generality (see \cite[Theorem 3.10]{baragliahekmati2018}). We define the transverse Hodge star operator $\ast_\dag$ (see \cite[Formula (7.2)]{ton97}) as
\begin{equation*}
\ast_\dag\varphi=\ast(\chi_{\mathcal{F}}\wedge\varphi),\quad \forall\varphi\in \Omega_{\mathrm{B}}^p,
\end{equation*}
where $\ast$ is the ordinary Hodge star operator associated to $g.$
Now we can define a new transverse Hermitian metric $\tilde\omega_{\dag,u}$ on $M$ by
\begin{align}\label{tomegan-1}
\tilde\omega_{\dag,u}^{n-1}=&\omega_0^{n-1}
+\partial_{\mathrm{B}}\left(\frac{\sqrt{-1}}{2}\bar\partial_{\mathrm{B}} u \wedge\omega_\dag^{n-2}\right)
+\overline{\partial_{\mathrm{B}}\left(\frac{\sqrt{-1}}{2}\bar\partial_{\mathrm{B}} u \wedge\omega_\dag^{n-2}\right)}\\
=&\omega_0^{n-1}
+\sqrt{-1}\partial_{\mathrm{B}}\bar\partial_{\mathrm{B}} u \wedge\omega_\dag^{n-2}
+\Re\left(\sqrt{-1} \partial_{\mathrm{B}}u \wedge\bar\partial_{\mathrm{B}}(\omega_\dag^{n-2})\right)>_{\mathrm{b}}0,\nonumber
\end{align}
where $u \in C_{\mathrm{B}}^\infty(M,\mathbb{R})$ and recall that $>_{\mathrm{b}}0$ means strictly transverse positivity.  If $\omega_0$ is a transverse (strongly) Gauduchon metric, then so is  $\tilde\omega_{\dag,u}.$

We consider the transverse Gauduchon's question (cf.\cite[Chapter IV.5]{gauduchon2}): given a representative $\Psi\in c_1^{\mathrm{BC,b}}(Q),$ we hope to find a transverse Gauduchon metric $\tilde\omega_{\dag,u}$ with $\tilde\omega_{\dag,u}^{n-1}\in [\omega_\dag^{n-1}]_{\mathrm{A,b}}\in H_{\mathrm{A,b}}^{n-1,n-1}(M,Q)$ such that $\ric(\tilde\omega_{\dag,u})=\Psi.$ For this aim, we consider $\tilde\omega_{\dag,u}$ uniquely determined by \eqref{tomegan-1}, and it is sufficient to solve
\begin{equation*}
\tilde\omega_{\dag,u}^n=e^{\frac{G+b}{n}}\omega_\dag^n,\quad \sup_Mu=0,
\end{equation*}
where  $G\in C_{\mathrm{B}}^{\infty}(M,\mathbb{R})$ satisfies $\ric(\omega_\dag)=\Psi- \frac{\sqrt{-1} }{n} \partial_{\mathrm{B}}\bar\partial_{\mathrm{B}} G.$
It follows from \eqref{detchin-1} and \eqref{astdet} that it suffices to solve
$$
\log\frac{\det\left(\ast_{\dag}\frac{\tilde\omega_{\dag,u}^{n-1}}{(n-1)!}\right)}{\det\omega_\dag}
=\log\left(\frac{\det\tilde\omega_{\dag,u}}{\det\omega_\dag}\right)^{n-1}=G+b,\quad \sup_Mu=0,
$$
i.e.,
\begin{align}
\label{tstw1503equ}
\log\frac{\left(\omega_h+\frac{1}{n-1}\left[(\Delta_{\mathrm{B}} u)\omega_\dag-\sqrt{-1}\partial_{\mathrm{B}}\bar\partial_{\mathrm{B}} u\right]+Z(u)\right)^n\wedge\chi_{\mathcal{F}}}{\omega_\dag^n\wedge\chi_{\mathcal{F}}}=G+b,\quad \sup_Mu=0,
\end{align}
where
$
\omega_h=\frac{1}{(n-1)!}\ast_{\dag}\omega_0^{n-1},
$
$
\Delta_{\mathrm{B}} u=\frac{\sqrt{-1}\partial_{\mathrm{B}}\bar\partial_{\mathrm{B}}u\wedge\omega_\dag^{n-1}\wedge\chi_{\mathcal{F}}}
{\omega_\dag^n\wedge\chi_{\mathcal{F}}},
$
\begin{align}
\label{ttildeomega}
\omega_h+\frac{1}{n-1}\left[(\Delta_{\mathrm{B}} u)\omega_\dag-\sqrt{-1}\partial_{\mathrm{B}}\bar\partial_{\mathrm{B}} u\right]+Z(u)>_{\mathrm{b}}0,
\end{align}
and
\begin{align}
\label{zu}
Z(u)=\frac{1}{(n-1)!}\ast_\dag\Re\left[\sqrt{-1}\partial_{\mathrm{B}} u\wedge\bar\partial_{\mathrm{B}}(\alpha^{n-2})\right].
\end{align}
\begin{thm}
\label{thmfoliation}
Let $(M^{2n+r},\mathcal{F},I)$ be a compact oriented, taut, transverse Hermitian foliated manifold, where $\mathcal{F}$ is the foliation with complex codimension $n$ and  $I$ is the integrable transverse almost complex structure on $TM/(T\mathcal{F}).$ Then for any $G\in C_{\mathrm{B}}^\infty(M,\mathbb{R})$, transverse Hermitian metric $\alpha_0$ and transverse Gauduchon metric $\alpha,$ there exists a unique pair $(u,b)\in C_{\mathrm{B}}^\infty(M,\mathbb{R})\times\mathbb{R}$ solving \eqref{tstw1503equ}. In particular, given a representative $\Psi\in c_1^{\mathrm{BC,b}}(Q),$ there exists a unique $\omega$ uniquely determined by  \eqref{tomegan-1} such that $\tilde\omega_{\dag,u}^{n-1}\in [\alpha^{n-1}]_{\mathrm{A,b}}\in H_{\mathrm{A,b}}^{n-1,n-1}(M,Q)$ with $\ric(\tilde\omega_{\dag,u})=\Psi.$ Moreover, $c_1^{\mathrm{BC,b}}(Q)=0$ holds if and only if there exist basic Chern-Ricci flat transverse Gauduchon metrics on $M.$
\end{thm}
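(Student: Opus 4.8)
The plan is to prove Theorem \ref{thmfoliation} by the continuity method, following line by line the proofs of Corollary \ref{corjia1} and Corollary \ref{corjia2}, with the Sasakian data $(\eta,L_\xi,\mathcal{F}_\xi,\nu(\mathcal{F}_\xi))$ replaced throughout by $(\chi_{\mathcal{F}},L,\mathcal{F},Q)$ and with the basic Stokes' theorem \eqref{bst} used wherever integration by parts was invoked. First I would recast \eqref{tstw1503equ} in the shape \eqref{equ1}: take $f(\lambda)=\sum_{i=1}^n\log\lambda_i$ on $\Gamma=\Gamma_n$, which satisfies Assumptions \eqref{assum1}, \eqref{assum2} and \eqref{assum3}; put $\psi=G+b$ with the constant $b$ treated as an unknown; and absorb $\omega_h$, the $(n-1)$-trace term and the first-order operator $Z(u)$ of \eqref{zu} into a background datum so that the left side of \eqref{tstw1503equ} becomes $f(\lambda(A_{\dag,u}))$ for a transverse Hermitian endomorphism $A_{\dag,u}$ depending on $u$ up to first order. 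Since the limit $\lim_{\lambda_n\to+\infty}f(\lambda_1,\dots,\lambda_n)$ is infinite, every transverse admissible function is automatically a transverse $\mathcal{C}$-subsolution; as $\omega_h$ is strictly transverse positive, $\underline u=0$ is admissible. Consequently the a priori estimate $\|u\|_{C^{2,\alpha}(M,g)}\leq C$ of Theorem \ref{mainthm} persists here: its proof is the transcription of Sections \ref{sec0order}--\ref{sec1st}, with \eqref{bst} in place of Stokes' theorem, with the transverse Gauduchon assumption on $\alpha$ used (as in the Hermitian case) for the zero order bound, and with the extra lower-order terms coming from $Z(u)$ estimated exactly as in \cite{stw1503}.

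For the continuity path I would solve \eqref{tstw1503equ} with $tG$ in place of $G$, $t\in[0,1]$, and set $\mathscr{I}:=\{t\in[0,1]:\text{the equation is solvable with }\sup_M u_t=0\}$, so that $0\in\mathscr{I}$. Openness of $\mathscr{I}$ is proved by the implicit function theorem exactly as in Corollary \ref{corjia1}: at a solution $u_{\hat t}$, with the metric $\tilde\omega_{\dag,u_{\hat t}}$ of \eqref{tomegan-1}, the linearization of the $\log\det$ functional is a strongly transverse elliptic operator $\widetilde L(v)=g^{\bar j i}v_{i\bar j}+(\text{first order in }v)$, and because $\alpha$ (hence $\omega_\dag$) is transverse Gauduchon, the $(n-1)$-form structure \eqref{betau}, \eqref{tomegan-1} together with \eqref{bst} shows that $\widetilde L$ has Fredholm index zero, kernel the constants, and $L^2$-cokernel the constants with respect to the volume $\tilde\omega_{\dag,u_{\hat t}}^n\wedge\chi_{\mathcal{F}}$; the unknown constant $b_t$ absorbs the cokernel and the strong maximum principle gives injectivity modulo constants, so the associated map is a local diffeomorphism. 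Closedness follows from the a priori estimates: $b_t$ is bounded by the maximum principle (or by integrating \eqref{tstw1503equ} against $\chi_{\mathcal{F}}$), $\sup_M|u_t|\leq C$ and $\|u_t\|_{C^{2,\alpha}}\leq C$ by the adapted Theorem \ref{mainthm}, and a bootstrapping argument places $u_t$ in $C_{\mathrm{B}}^\infty(M,\mathbb{R})$; hence $\mathscr{I}=[0,1]$ and a solution exists at $t=1$. Uniqueness of $(u,b)$ is the argument at the end of the proof of Corollary \ref{corjia1}: for two solutions $(u,b),(u',b')$, evaluating the ratio of the Monge--Amp\`ere-type equations at a maximum and at a minimum of $u'-u$ forces $b=b'$, after which $u'-u$ solves a linear strongly transverse elliptic equation with no zeroth-order term, so $u'-u\equiv\sup_M(u'-u)=0$.

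For the geometric statement, given $\Psi\in c_1^{\mathrm{BC,b}}(Q)$, the reduction recorded just before \eqref{tstw1503equ} lets me pick $G\in C_{\mathrm{B}}^\infty(M,\mathbb{R})$ with $\ric(\omega_\dag)=\Psi-\frac{\sqrt{-1}}{n}\partial_{\mathrm{B}}\overline{\partial}_{\mathrm{B}}G$; solving \eqref{tstw1503equ} for this $G$ and applying $-\sqrt{-1}\partial_{\mathrm{B}}\overline{\partial}_{\mathrm{B}}$ to the resulting identity $\det\tilde\omega_{\dag,u}=e^{(G+b)/n}\det\omega_\dag$ via \eqref{ricciforme}--\eqref{transversericciform} gives $\ric(\tilde\omega_{\dag,u})=\Psi$, while $\tilde\omega_{\dag,u}^{n-1}\in[\alpha^{n-1}]_{\mathrm{A,b}}$ is built into \eqref{tomegan-1}; moreover $\tilde\omega_{\dag,u}$ is transverse Gauduchon because $\alpha_0$ may be chosen transverse Gauduchon (by \cite{baragliahekmati2018}) and the construction \eqref{tomegan-1} preserves that property. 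Finally, if $c_1^{\mathrm{BC,b}}(Q)=0$ the choice $\Psi=0$ is admissible and produces a basic Chern--Ricci flat transverse Gauduchon metric; conversely any such metric $\tilde\omega$ satisfies $2\pi c_1^{\mathrm{BC,b}}(Q)=[\ric(\tilde\omega)]_{\mathrm{BC,b}}=0$, completing the equivalence.

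I expect the openness step to be the real obstacle. One must check that in the transverse foliated setting the first-order operator $Z(u)$ coming from $\Re(\sqrt{-1}\partial_{\mathrm{B}}u\wedge\overline{\partial}_{\mathrm{B}}(\omega_\dag^{n-2}))$ couples with the transverse Gauduchon condition and with \eqref{bst} precisely as in \cite{stw1503, twcrelle}, so that $\widetilde L$ genuinely has index zero with cokernel the constants; this is the point where the tautness of $\mathcal{F}$ (hence \eqref{dchif}) and the holonomy invariance of $g_Q$ are really used, and it is the step most sensitive to the fact that $\omega_\dag$ need not be $\mathrm{d}_{\mathrm{B}}$-closed. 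A second, more routine technical point is carrying the second order estimate of Theorem \ref{thm2order} through the additional terms produced by $Z(u)$, which requires only the bounds on the torsion of the fixed metric $\omega_\dag$ supplied by the Gauduchon structure.
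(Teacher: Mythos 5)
Your proposal is correct and follows essentially the same route as the paper: reduction of \eqref{tstw1503equ} to the framework of \eqref{equ1} with $\underline{u}=0$ as transverse $\mathcal{C}$-subsolution, a priori estimates transcribed from \cite{stw1503,twcrelle} in foliated coordinates (with the $L^\infty$ bound exploiting the transverse Gauduchon condition and the basic Stokes' theorem, exactly as in the paper's Step 1.2), openness via the Gauduchon-twisted implicit function theorem of Corollary \ref{corjia1}, uniqueness by the maximum principle, and the $\partial_{\mathrm{B}}\overline{\partial}_{\mathrm{B}}$-argument for the Chern--Ricci statement. The point you flag as the "real obstacle" — that the first-order term $Z(u)$ is compatible with the index-zero/cokernel computation — is precisely how the paper handles it, by replacing the linearized operator \eqref{deflv} with the one containing $Z(\varphi)$ and invoking \eqref{bst}.
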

This theorem is a transverse version of the Gauduchon conjecture solved by \cite{stw1503}.
\begin{proof}[Proof of Theorem \ref{thmfoliation}]We just need prove the existence and uniqueness of the solution to \eqref{tstw1503equ}.
 In the foliated local coordinate patch $(U;x_1,\cdots,x_r,z_1,\cdots,z_n),$ it is easy to see that Equation \eqref{tstw1503equ}, as an example of  \eqref{equ1} with $\psi=G+b$, is the same as the one in \cite[Theorem 1.4]{stw1503}.
Hence this theorem is a transverse version of  \cite[Theorem 1.4]{stw1503}, and we can use the  calculations and estimates at some fixed point there directly.  Here we just point out some difference. We fix a characteristic form $\chi_{\mathcal{F}}$ of $\mathcal{F}$ satisfying \eqref{dchif}. Note that $\underline{u}=0$ is the transverse subsolution to \eqref{tstw1503equ}. The proof splits into two steps.

\textbf{Step 1}: A priori estimates. This step also splits the following five sub-steps.

\textbf{Step 1.1}: The maximum principle yields that $|b|\leq \sup_M|G|+C.$

\textbf{Step 1.2}: $L^\infty$ estimate
\begin{equation}
\label{foliation0}
\sup_M|u|\leq C.
\end{equation}
For $p$ sufficiently large, it follows from \eqref{bst} that
\begin{align}
\label{equiii}
&\int_Me^{-pu}\sqrt{-1}\partial_{\mathrm{B}}\bar\partial_{\mathrm{B}} u \wedge \left(\omega_0^{n-1}+\tilde\omega_{\dag,u}^{n-1}\right)\wedge\chi_{\mathcal{F}}\\
=&p\int_Me^{-pu}\sqrt{-1}\partial_{\mathrm{B}}u\wedge\bar\partial_{\mathrm{B}}u
\wedge\left(\omega_0^{n-1}+\tilde\omega_{\dag,u}^{n-1}\right)\wedge\chi_{\mathcal{F}}\nonumber\\
&+\int_Me^{-pu} \sqrt{-1}\bar\partial_{\mathrm{B}} u\wedge \partial_{\mathrm{B}}  \left(\omega_0^{n-1}+\tilde\omega_{\dag,u}^{n-1}\right)\wedge\chi_{\mathcal{F}}\nonumber\\
=:&(I)+(II),\nonumber
\end{align}
where $\tilde\omega_{\dag,u}^{n-1}$ is given by \eqref{tomegan-1}. Since $\tilde\omega_{\dag,u}^{n-1}>_{\mathrm{b}}0,$ there exists a uniform constant $C$ such that
\begin{equation}
\label{equi}
(I)\geq \frac{1}{Cp}\int_M\sqrt{-1}\partial_{\mathrm{B}}e^{-\frac{pu}{2}}\wedge\bar\partial_{\mathrm{B}}e^{-\frac{pu}{2}}
\wedge\omega_\dag^{n-1}\wedge\chi_{\mathcal{F}}.
\end{equation}
 Since $e^{-pu}\sqrt{-1}\bar\partial_{\mathrm{B}}u=-\frac{1}{p}\sqrt{-1}\bar\partial_{\mathrm{B}}e^{-pu},$ we can deduce from \eqref{bst} that
 \begin{align}
 \label{equii}
 (II)=
 &\frac{1}{p}\int_Me^{-pu}\sqrt{-1}\bar\partial_{\mathrm{B}}\partial_{\mathrm{B}}
 \left(\omega_0^{n-1}+\tilde\omega_{\dag,u}^{n-1}\right)\wedge\chi_{\mathcal{F}}\\
 =&\frac{1}{p}\int_Me^{-pu}\sqrt{-1}\bar\partial_{\mathrm{B}}\partial_{\mathrm{B}}u
 \wedge\sqrt{-1}\bar\partial_{\mathrm{B}}\partial_{\mathrm{B}}\omega_\dag^{n-2} \wedge\chi_{\mathcal{F}}\nonumber\\
 &+\frac{2}{p}\int_Me^{-pu}\sqrt{-1}\bar\partial_{\mathrm{B}}\partial_{\mathrm{B}}\omega_0^{n-1}
 \wedge\chi_{\mathcal{F}}\nonumber\\
 =&\int_Me^{-pu}\sqrt{-1}\partial_{\mathrm{B}}u\wedge\bar\partial_{\mathrm{B}}u
 \wedge\sqrt{-1}\bar\partial_{\mathrm{B}}\partial_{\mathrm{B}}\omega_\dag^{n-2} \wedge\chi_{\mathcal{F}}\nonumber\\
 &+\frac{2}{p}\int_Me^{-pu}\sqrt{-1}\bar\partial_{\mathrm{B}}\partial_{\mathrm{B}}\omega_0^{n-1}
 \wedge\chi_{\mathcal{F}}\nonumber\\
 =&\frac{4}{p^2}\int_M\sqrt{-1}\partial_{\mathrm{B}}e^{-\frac{pu}{2}}
 \wedge\bar\partial_{\mathrm{B}}e^{-\frac{pu}{2}}
 \wedge\sqrt{-1}\bar\partial_{\mathrm{B}}\partial_{\mathrm{B}}\omega_\dag^{n-2} \wedge\chi_{\mathcal{F}}\nonumber\\
 &+\frac{2}{p}\int_Me^{-pu}\sqrt{-1}\bar\partial_{\mathrm{B}}\partial_{\mathrm{B}}\omega_0^{n-1}
 \wedge\chi_{\mathcal{F}}\nonumber\\
 \geq&-\frac{1}{2Cp}\int_M\sqrt{-1}\partial_{\mathrm{B}}e^{-\frac{pu}{2}}
 \wedge\bar\partial_{\mathrm{B}}e^{-\frac{pu}{2}}
 \wedge \omega_\dag^{n-1} \wedge\chi_{\mathcal{F}}\nonumber\\
 &-\frac{1}{C'p}\int_Me^{-pu} \omega_\dag^{n}
 \wedge\chi_{\mathcal{F}},\nonumber
 \end{align}
where we choose the constant $C$ the same as the one in \eqref{equi} and use the assumption that $p$ is sufficiently large.

On the other hand, using the foliated local coordinate patch $(U;x_1,\cdots,x_r,z_1,\cdots,z_n),$  it follows from  \eqref{formula1} and \cite[Lemma 5.1]{twcrelle} that
\begin{align}
&\sqrt{-1}\partial_{\mathrm{B}}\bar\partial_{\mathrm{B}} u \left(2\omega_0^{n-1}+\sqrt{-1}\partial_{\mathrm{B}}\bar\partial_{\mathrm{B}} u \wedge\omega_\dag^{n-2}+\Re\left(\sqrt{-1} \partial_{\mathrm{B}}u \wedge\bar\partial_{\mathrm{B}}(\omega_\dag^{n-2})\right)\right)\nonumber\\
\leq&C\left(1+|\partial_{\mathrm{B}}u|^2\right)\omega_\dag^n-\Re\left(\sqrt{-1} \partial_{\mathrm{B}}u \wedge\bar\partial_{\mathrm{B}}(\omega_\dag^{n-2})\right),\nonumber
\end{align}
which implies
\begin{align}
\label{e-pu}
&\int_Me^{-pu}\sqrt{-1}\partial_{\mathrm{B}}\bar\partial_{\mathrm{B}} u \left(2\omega_0^{n-1}+\sqrt{-1}\partial_{\mathrm{B}}\bar\partial_{\mathrm{B}} u \wedge\omega_\dag^{n-2}+\Re\left(\sqrt{-1} \partial_{\mathrm{B}}u \wedge\bar\partial_{\mathrm{B}}(\omega_\dag^{n-2})\right)\right)\wedge\chi_{\mathcal{F}}\\
\leq&C\int_Me^{-pu}\omega_\dag^n\wedge\chi_{\mathcal{F}}
+C\int_Me^{-pu}|\partial_{\mathrm{B}}u|^2\omega_\dag^n\wedge\chi_{\mathcal{F}}\nonumber\\
&-\int_Me^{-pu}\sqrt{-1}\partial_{\mathrm{B}}\bar\partial_{\mathrm{B}} u \wedge\Re\left(\sqrt{-1} \partial_{\mathrm{B}}u \wedge\bar\partial_{\mathrm{B}}(\omega_\dag^{n-2})\right)\wedge\chi_{\mathcal{F}}.\nonumber
\end{align}
From  \eqref{bst}, we can deduce
\begin{align}
\label{foliationguji1}
&-\int_Me^{-pu}\sqrt{-1}\partial_{\mathrm{B}}\bar\partial_{\mathrm{B}} u \wedge\Re\left(\sqrt{-1} \partial_{\mathrm{B}}u \wedge\bar\partial_{\mathrm{B}}(\omega_\dag^{n-2})\right)\wedge\chi_{\mathcal{F}}\\
=&-\Re\int_Me^{-pu}\sqrt{-1}\partial_{\mathrm{B}}\bar\partial_{\mathrm{B}} u \wedge \sqrt{-1} \partial_{\mathrm{B}}u \wedge\bar\partial_{\mathrm{B}}(\omega_\dag^{n-2}) \wedge\chi_{\mathcal{F}}\nonumber\\
=&\frac{1}{p}\Re\int_M\sqrt{-1} \partial_{\mathrm{B}}e^{-pu}\wedge\sqrt{-1}\partial_{\mathrm{B}}\bar\partial_{\mathrm{B}} u \wedge\bar\partial_{\mathrm{B}}(\omega_\dag^{n-2}) \wedge\chi_{\mathcal{F}}\nonumber\\
=&\frac{1}{p}\Re\int_M\sqrt{-1} \partial_{\mathrm{B}}e^{-pu}\wedge\sqrt{-1}\bar\partial_{\mathrm{B}} u \wedge\partial_{\mathrm{B}}\bar\partial_{\mathrm{B}}(\omega_\dag^{n-2}) \wedge\chi_{\mathcal{F}}\nonumber\\
=&-\frac{4}{p^2}\Re\int_M\sqrt{-1} \partial_{\mathrm{B}}e^{-\frac{pu}{2}}\wedge\sqrt{-1}\bar\partial_{\mathrm{B}} e^{-\frac{pu}{2}} \wedge\partial_{\mathrm{B}}\bar\partial_{\mathrm{B}}(\omega_\dag^{n-2}) \wedge\chi_{\mathcal{F}}\nonumber\\
\leq&\frac{C}{p^2}\Re\int_M\sqrt{-1} \partial_{\mathrm{B}}e^{-\frac{pu}{2}}\wedge\sqrt{-1}\bar\partial_{\mathrm{B}} e^{-\frac{pu}{2}} \wedge\omega_\dag^{n-1} \wedge\chi_{\mathcal{F}}.\nonumber
\end{align}
 Note that
\begin{equation}
\label{foliationguji2}
\int_Me^{-pu}|\partial_{\mathrm{B}}u|^2\omega_\dag^n\wedge\chi_{\mathcal{F}}
=\frac{4n}{p^2}\int_M\sqrt{-1}\partial_{\mathrm{B}}e^{-\frac{pu}{2}}
\wedge\bar\partial_{\mathrm{B}}e^{-\frac{pu}{2}}\wedge\omega_\dag^{n-1}\wedge\chi_{\mathcal{F}}.
\end{equation}
Thanks to \eqref{equiii}, \eqref{equi}, \eqref{equii}, \eqref{e-pu}, \eqref{foliationguji1} and \eqref{foliationguji2}, we get
\begin{equation}
 \int_M\left|\partial_{\mathrm{B}}e^{-\frac{pu}{2}}\right|^2\omega_\dag^n\wedge\chi_{\mathcal{F}}
\leq Cp\int_Me^{-pu}\omega_\dag^n\wedge\chi_{\mathcal{F}}.
\end{equation}
We set $v=u-\inf_Mu.$
Then from \eqref{ttildeomega} we have
\begin{align}
\label{deltabv}
\Delta_{\mathrm{B}}v
=&\frac{\sqrt{-1}\partial_{\mathrm{B}}\bar \partial_{\mathrm{B}}u\wedge\omega_\dag^{n-1}\wedge\chi_{\mathcal{F}}}{\omega_\dag^{n}\wedge\chi_{\mathcal{F}}}\\
=&\tr_{\omega_\dag}\left(\ast_{\dag}\frac{\tilde\omega_{\dag,u}^{n-1}}{(n-1)!}\right)
-\tr_{\omega_\dag}\omega_h-\tr_{\omega_\dag }Z(u)\nonumber\\
\geq&-\tr_{\omega_\dag}\omega_h-\tr_{\omega_\dag} Z(u)\nonumber\\
=&-\tr_{\omega_\dag}\omega_h-\frac{\frac{n}{(n-1)!}\ast_\dag\Re\left[\sqrt{-1}\partial_{\mathrm{B}} u\wedge\bar\partial_{\mathrm{B}}(\omega_\dag^{n-2})\right]\wedge\omega_\dag^{n-1}\wedge\chi_{\mathcal{F}}}
{\omega_\dag^{n}\wedge\chi_{\mathcal{F}}}\nonumber\\
=&-\tr_{\omega_\dag}\omega_h-\frac{\Re\left[\sqrt{-1}\partial_{\mathrm{B}} u\wedge\bar\partial_{\mathrm{B}}(\omega_\dag^{n-2})\right]\wedge\frac{n}{(n-1)!}\ast_\dag\omega_\dag^{n-1}\wedge\chi_{\mathcal{F}}}
{\omega_\dag^{n}\wedge\chi_{\mathcal{F}}}\nonumber\\
=&-\tr_{\omega_\dag}\omega_h-\frac{n\Re\left[\sqrt{-1}\partial_{\mathrm{B}} u\wedge\bar\partial_{\mathrm{B}}(\omega_\dag^{n-2})\right]\wedge \omega_\dag \wedge\chi_{\mathcal{F}}}
{\omega_\dag^{n}\wedge\chi_{\mathcal{F}}}\nonumber\\
=&-\tr_{\omega_\dag}\omega_h-\frac{n(n-2)\Re\left[\sqrt{-1}\partial_{\mathrm{B}} u\wedge\bar\partial_{\mathrm{B}}(\omega_\dag^{n-1})\right] \wedge\chi_{\mathcal{F}}}
{(n-1)\omega_\dag^{n}\wedge\chi_{\mathcal{F}}}\nonumber\\
\geq&-C-\frac{n(n-2)\Re\left[\sqrt{-1}\partial_{\mathrm{B}} u\wedge\bar\partial_{\mathrm{B}}(\omega_\dag^{n-1})\right] \wedge\chi_{\mathcal{F}}}
{(n-1)\omega_\dag^{n}\wedge\chi_{\mathcal{F}}}.\nonumber
\end{align}
From \eqref{bst} and \eqref{deltabv}, we get
\begin{align}
\label{iterationuse}
&\int_M\left|\partial_{\mathrm{B}}v^{\frac{p+1}{2}}\right|^2\omega_\dag^n\wedge\chi_{\mathcal{F}}\\
=&\frac{n(p+1)^2}{4}\int_M\sqrt{-1}v^{p-1}\partial_{\mathrm{B}}v\wedge\bar \partial_{\mathrm{B}} v
\wedge\omega_\dag^n\wedge\chi_{\mathcal{F}}\nonumber\\
=&\frac{n(p+1)^2}{4p}\int_M\sqrt{-1} \partial_{\mathrm{B}}v^p\wedge\bar \partial_{\mathrm{B}} v
\wedge\omega_\dag^n\wedge\chi_{\mathcal{F}}\nonumber\\
=&\frac{n(p+1)^2}{4p}\int_M v^p(\Delta_{\mathrm{B}}v)
\wedge\omega_\dag^n\wedge\chi_{\mathcal{F}}\nonumber\\
&+\frac{n(p+1) }{4p}\int_M \sqrt{-1}(\bar\partial_{\mathrm{B}}v^{p+1})
\wedge\left(\partial_{\mathrm{B}}\omega_\dag^n\right)\wedge\chi_{\mathcal{F}}\nonumber\\
=&\frac{(p+1)^2}{4p}\int_M v^p(\Delta_{\mathrm{B}}v)
\wedge\omega_\dag^n\wedge\chi_{\mathcal{F}}\nonumber\\
\leq&C\frac{(p+1)^2}{4p}\int_M v^p
\wedge\omega_\dag^n\wedge\chi_{\mathcal{F}}\nonumber\\
&+\frac{n(n-2)(p+1)^2}{4p(n-1)}\Re\int_Mv^p\sqrt{-1}\partial_{\mathrm{B}} v\wedge\bar\partial_{\mathrm{B}}(\omega_\dag^{n-1})\wedge\chi_{\mathcal{F}}\nonumber\\
=&C\frac{(p+1)^2}{4p}\int_M v^p
\wedge\omega_\dag^n\wedge\chi_{\mathcal{F}}
+\frac{n(n-2)(p+1) }{4p(n-1)}\Re\int_M \sqrt{-1}\partial_{\mathrm{B}} v^{p+1}\wedge\bar\partial_{\mathrm{B}}(\omega_\dag^{n-1})\wedge\chi_{\mathcal{F}}\nonumber\\
=&C\frac{(p+1)^2}{4p}\int_M v^p
\wedge\omega_\dag^n\wedge\chi_{\mathcal{F}},\nonumber
\end{align}
where we twice use the transverse Gauduchon condition $\partial_{\mathrm{B}}\bar\partial_{\mathrm{B}}\omega_\dag^{n-1}=0.$
Then the $L^\infty$ estimate \eqref{foliation0} follows from \eqref{iterationuse} and the Morser iteration.

 There is another method to obtain the $L^\infty$ estimate by the weak Harnack inequality \cite[Theorem 9.22]{gt1998} and the modified Alexandroff-Bakelman-Pucci maximum principle (see \cite[Proposition 11]{gaborjdg}).
Indeed, we assume that $M$ is covered by finite the foliated charts $U_i$'s diffeomorphism to $V_i\times B_{2}(\mathbf{0})\subset \mathbb{R}^r\times \mathbb{C}^n$ such that
$\{\frac{1}{2}U_i\}$ each of which is diffeomorphism to  $\frac{1}{2}V_i\times B_1(\mathbf{0})$ still covers $M.$ We work in the quantities of the complex variables in $B_{2}(\mathbf{0})$ and hence the upper bound for $\||u|^p\|_{L^1}$ follows from \eqref{deltabv} and the argument in the proof of \cite[Proposition 10]{gaborjdg}. Then we use the  he modified Alexandroff-Bakelman-Pucci maximum principle to prove the $L^\infty$ estimate (see the argument in the proof of Theorem \ref{thm0order}).

\textbf{Step 1.3}: $C^2$ estimate
\begin{equation}
\label{foliation2nd}
\sup_M|\partial_{\mathrm{B}}\bar\partial_{\mathrm{B}}u|\leq C\left(1+\sup_M|\partial_{\mathrm{B}}u|^2\right).
\end{equation}
Using maximum principle, the perturbation argument in the proof of Theorem \ref{thm2order}, and the foliated local coordinate patch $(U;x_1,\cdots,x_r,z_1,\cdots,z_n)$, we work with the qualities of $(z_1,\cdots,z_n).$ Hence the estimate in \eqref{foliation2nd} follows from the argument in the proof of \cite[Theorem 1.2]{stw1503}.

\textbf{Step 1.4}: $C^1$ estimate
\begin{equation}
\label{foliation1st}
\sup_M|\partial_{\mathrm{B}} u|\leq C.
\end{equation}
The  argument in the proof of Theorem \ref{thm1order} yields that using the foliated local coordinate patch $(U;x_1,\cdots,x_r,z_1,\cdots,z_n)$, we work with the qualities of $(z_1,\cdots,z_n).$ Hence \eqref{foliation1st} follows from the argument in the proof of \cite[Theorem 1.7]{twcrelle} (see also \cite[Theorem 5.1]{twjams} and \cite[Theorem 5.1]{zhengimrn}).

\textbf{Step 1.5}: $C^{2,\beta}$ estimate
\begin{equation}
\label{foliation2alpha}
\sup_M|u|_{C^{2,\beta}}\leq C,
\end{equation}
with some $\beta\in(0,1)$
and high order estimate
\begin{equation}
\label{foliationhigh}
\sup_M|u|_{C^k}\leq C_k.
\end{equation}
In the foliated local coordinate patch $(U;x_1,\cdots,x_r,z_1,\cdots,z_n)$, we work with the qualities of $(z_1,\cdots,z_n).$ Therefore, given \eqref{foliation0}, \eqref{foliation2nd} and \eqref{foliation1st}, the $C^{2,\alpha}$ estimate \eqref{foliation2alpha} follows from the Evans-Krylov theory (see \cite{twwycvpde,chucvpde}) and high order estimate \eqref{foliationhigh} follows from the bootstrapping argument.

\textbf{Step 2}: Existence and uniqueness of the solution to \eqref{tstw1503equ}. We use the continuity method.
Fix a basic function $F\in C_{\mathrm{B}}^\infty(M,\mathbb{R})$  to find $(u,b)\in C_{\mathrm{B}}^\infty(M,\mathbb{R})\times \mathbb{R}$
such that
\begin{equation}
\label{foliationwhesstcmamodify}
\left(\omega_h+\frac{1}{n-1}\left[\left(\Delta_{\mathrm{B}}u\right)\omega_\dag
-\sqrt{-1}\partial_{\mathrm{B}}\overline{\partial}_{\mathrm{B}}u \right] +Z(u)\right)^n\wedge\chi_{\mathcal{F}}=e^{F+b}
\omega_\dag^n\wedge\chi_{\mathcal{F}},
\end{equation}
with
\begin{equation}
\label{foliationtildeomegath}
\omega_h+
\frac{1}{n-1}\left[\left(\Delta_{\mathrm{B}}u \right)\omega_\dag
-\sqrt{-1}\partial_{\mathrm{B}}\overline{\partial}_{\mathrm{B}}u  \right]+Z(u)>_{\mathrm{b}}0,
\quad \sup_Mu =0
\end{equation}
for transverse positive basic real $(1,1)$ forms $\omega_h$ and $\omega_\dag$ with $\partial_{\mathrm{B}}\overline{\partial}_{\mathrm{B}}\omega_\dag^{n-1}=0.$
Note the \eqref{foliationtildeomegath} is slightly different from \eqref{tstw1503equ} with $e^F=e^G\frac{\omega_\dag^n\wedge \chi_{\mathcal{F}}}{\omega_h^n\wedge \chi_{\mathcal{F}}}.$
The conclusion, except the closeness given by \emph{Step 1},  follows from  the argument in Corollary \ref{corjia1} with $k=n$ by replacing
$$\omega_h+\frac{1}{n-1}\left[\left(\Delta_{\mathrm{B}}u\right)\omega_\dag
-\sqrt{-1}\partial_{\mathrm{B}}\overline{\partial}_{\mathrm{B}}u \right] $$ with $$\omega_h+\frac{1}{n-1}\left[(\Delta_{\mathrm{B}} u)\omega_\dag-\sqrt{-1}\partial_{\mathrm{B}}\bar\partial_{\mathrm{B}} u\right]+Z(u),$$
replacing \eqref{deflv} with
\begin{equation*}
L(\varphi)=\frac{n\left[\left(\Delta_{\mathrm{B}}\varphi\right)\omega_\dag
-\sqrt{-1}\partial_{\mathrm{B}}\overline{\partial}_{\mathrm{B}}\varphi+Z(\varphi)\right]\wedge
\left(\ast_{\dag}\frac{\tilde\omega_{\dag,u}^{n-1}}{(n-1)!}\right)^{n-1} \wedge\chi_{\mathcal{F}}}{(n-1)\left(\ast_{\dag}\frac{\tilde\omega_{\dag,u}^{n-1}}{(n-1)!}\right)^n\wedge \chi_{\mathcal{F}}}=:g^{\bar j i}\varphi_{i\bar j},
\end{equation*}
 and replacing $\eta$ with $\chi_{\mathcal{F}}$ wherever it occurs.
\end{proof}
The same argument as in the proof of Theorem \ref{thmfoliation} (for $L^\infty$ estimate we use the weak Harnack inequality) also allows us to find a transverse Gauduchon metric $\tilde\omega_{\dag,u}$ defined in \eqref{tomegan-1} which solves the equation
\begin{equation*}
\tilde\omega_{\dag,u}^k\wedge\omega_\dag^{n-k}\wedge\chi_{\mathcal{F}}
=e^{G+b}\omega_\dag^{n}\wedge\chi_{\mathcal{F}},\quad 1\leq k\leq n,\quad G\in C^\infty_{\mathrm{B}}(M,\mathbb{R}).
\end{equation*}
\begin{thm}
\label{thmfoliation2}
Let $(M^{2n+r},\mathcal{F},I)$ be a compact oriented, taut, transverse Hermitian foliated manifold, where $\mathcal{F}$ is the foliation with complex codimension $n$ and  $I$ is the integrable transverse almost complex structure on $TM/(T\mathcal{F}),$  and let  $\omega_h$ be a  strictly  transverse $k$-positive basic real $(1,1)$ form $($denoted by $\omega_h>_{\mathrm{b},k}0$$)$, i.e., the $n$-tuple $\lambda(\omega_h)$ of eigenvalues  of $\omega_h$ with respect to $\omega_\dag$ satisfies $\lambda\left( \omega_h  \right)\in \Gamma_k.$  Then Theorem \ref{mainthm} holds. In particular,
 \begin{enumerate}
 \item  given a basic function $G\in C_{\mathrm{B}}^\infty(M, \mathbb{R})$, there exists a unique pair $(u,b) \in C_{\mathrm{B}}^\infty(M,\mathbb{R})\times\mathbb{R}$  solving the equation
\begin{equation}
\label{foliationtwhesstcma}
\left(\omega_h+\frac{1}{n-1}\left[\left(\Delta_{\mathrm{B}}u\right)\omega_\dag-\sqrt{-1}\partial_{\mathrm{B}}\overline{\partial}_{\mathrm{B}}u \right] \right)^k\wedge\omega_\dag^{n-k}\wedge\chi_{\mathcal{F}}=e^{G+b}
\omega_\dag^n\wedge\chi_{\mathcal{F}},
\end{equation}
where  $\sup_Mu=0$ and
\begin{equation*}
\omega_h+\frac{1}{n-1}\left[\left(\Delta_{\mathrm{B}}u\right)\omega_\dag-\sqrt{-1}\partial_{\mathrm{B}}\overline{\partial}_{\mathrm{B}}u \right] >_{\mathrm{b},k}0.
\end{equation*}
\item given a basic function $G\in C_{\mathrm{B}}^\infty(M, \mathbb{R})$,  there exists a unique pair $(u,b) \in C_{\mathrm{B}}^\infty(M,\mathbb{R})\times\mathbb{R}$  solving the equation the transverse Hessian equation
\begin{equation}
\label{foliationhesscma}
\left(\omega_h+ \sqrt{-1}\partial_{\mathrm{B}}\overline{\partial}_{\mathrm{B}}u \right)^k\wedge\omega_\dag^{n-k}\wedge\chi_{\mathcal{F}}=e^{G+b}
\omega_\dag^n\wedge\chi_{\mathcal{F}},
\end{equation}
where  $\sup_Mu=0$ and
\begin{equation*}
\omega_h+ \sqrt{-1}\partial_{\mathrm{B}}\overline{\partial}_{\mathrm{B}}u>_{\mathrm{b},k}0.
\end{equation*}
Furthermore, the solution of \eqref{foliationhesscma} for $k=n$ yields that $c_1^{\mathrm{BC,b}}(Q)=0$ holds if and only if there exist basic Chern-Ricci flat transverse Hermitian metrics on $M$.
\item if $ \mathrm{d}_\mathrm{B}\omega_h=\mathrm{d}_\mathrm{B}\omega_\dag=0,$ then for any $\mathrm{d}_{\mathrm{B}}$-closed strictly transverse positive real $(1,1)$ form $\omega_0,$ there exists a unique  $u\in C^\infty_{\mathrm{B}}(M,\mathbb{R})$ solving the general transverse Hessian quotient equation
\begin{equation}
\label{foliationhessquotient}
\left(\omega_h+ \sqrt{-1}\partial_{\mathrm{B}}\overline{\partial}_{\mathrm{B}}u \right)^\ell\wedge\omega_\dag^{n-\ell}\wedge\chi_{\mathcal{F}}
=c \left(\omega_h+ \sqrt{-1}\partial_{\mathrm{B}}\overline{\partial}_{\mathrm{B}}u \right)^k\wedge\omega_\dag^{n-k}\wedge\chi_{\mathcal{F}},\quad 1\leq \ell<k\leq n,
\end{equation}
if
\begin{equation}
\label{foliationhessquotientcon}
kc\omega_h^{k-1}\wedge\omega_\dag^{n-k} -\ell\omega_h^{\ell-1}\wedge\omega_\dag^{n-\ell}>_{\mathrm{b}}0,
\end{equation}
where
$$
c=\frac{\int_M\omega_h^\ell\wedge\omega_\dag^{n-\ell}\wedge\chi_{\mathcal{F}}}{\int_M\omega_h^k
\wedge\omega_\dag^{n-k}\wedge\chi_{\mathcal{F}}}.
$$
 \end{enumerate}
 \end{thm}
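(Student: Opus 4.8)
The plan is to observe that Theorem \ref{thmfoliation2} is, line for line, the transverse-complex-geometry analogue of Theorem \ref{mainthm} together with Corollaries \ref{corjia1}, \ref{cor1} and \ref{cor2}, under the dictionary $L_\xi\rightsquigarrow L$, $\eta\rightsquigarrow\chi_{\mathcal{F}}$, $\mathcal{F}_\xi\rightsquigarrow\mathcal{F}$, $\nu(\mathcal{F}_\xi)\rightsquigarrow Q$, and $\mathrm{dvol}_g\rightsquigarrow\ast_\dag 1\wedge\chi_{\mathcal{F}}$. First I would establish that Theorem \ref{mainthm} holds in the foliated setting, i.e.\ the estimate \eqref{mainequ} for a solution $u$ to \eqref{equ1} with a transverse $\mathcal{C}$-subsolution $\underline{u}$, where now $A_{\dag,u}$ is the transverse Hermitian endomorphism of $(Q,I)$ determined by $\beta_\dag+\partial_{\mathrm{B}}\bar\partial_{\mathrm{B}}u$ with respect to $g_Q$. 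Once this is in hand, the three assertions follow by the continuity method exactly as in Section \ref{secapp}, with the only changes being that ordinary Stokes' theorem is replaced throughout by the basic Stokes' theorem \eqref{bst}; the existence of a transverse Gauduchon conformal factor is supplied by \cite[Theorem 3.10]{baragliahekmati2018}; and the basic $\partial_{\mathrm{B}}\bar\partial_{\mathrm{B}}$-lemma together with the basic Bott--Chern/Aeppli duality from Section \ref{secfoliation} replaces the corresponding Sasakian statements.

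For the a priori estimates, the second- and first-order bounds are entirely local: the computations in the proofs of Theorem \ref{thm2order} and Theorem \ref{thm1order} take place in a foliated chart $(U;x_1,\dots,x_r,z_1,\dots,z_n)$ and involve only the complex variables, the adapted Chern connection $\nabla^\dag$ of $\omega_\dag$ on $Q$ (and its dual), and the commutation identities \eqref{ricciidentity}, none of which sees the leaf directions; hence those proofs carry over verbatim, and the blow-up argument of Theorem \ref{thm1order} again produces a non-constant Lipschitz $\Gamma$-solution on $\mathbb{C}^n$, contradicting Theorem \ref{thmgaborthm20}. Evans--Krylov then gives \eqref{foliation2alpha} and bootstrapping gives \eqref{foliationhigh}. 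The zero-order estimate is where a genuine modification is needed: since $\Gamma\subset\{\sum_i x_i>0\}$ one still gets $\Delta_{\mathrm{B}}u\ge -C$, but the Green's-function representation \eqref{greenu} used in Theorem \ref{thm0order} relied on El Kacimi--Alaoui's identification of $\Delta_g$ with an extension of $-\Delta_{\mathrm{d}_{\mathrm{B}}}$, which is not available for a general taut foliation. Instead I would use the local argument sketched at the end of the proof of Theorem \ref{thm0order}: working in finitely many foliated charts diffeomorphic to $V_i\times B_2(\mathbf{0})$, the weak Harnack inequality \cite[Theorem 9.22]{gt1998} applied in the complex variables, combined with \eqref{bst} to patch the local integrals, yields $\||u|^p\|_{L^1}\le C$ for some $p>0$; then Sz\'ekelyhidi's Alexandroff--Bakelman--Pucci argument \cite[Proposition 11]{gaborjdg}, carried out in a ball around the minimum of $u+\epsilon|\mathbf{z}|^2$, gives $\inf_M u\ge -C$.

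With Theorem \ref{mainthm} available in this setting, I would prove assertion (1) as in Corollary \ref{corjia1}: set $\beta_\dag=(\tr_{\omega_\dag}\omega_h)\omega_\dag-(n-1)\omega_h$, so that \eqref{foliationtwhesstcma} becomes $f(\lambda(A_{\dag,u}))=\log\sigma_k(\mathbf{T}(\lambda(A_{\dag,u})))=G+b$ on $\Gamma=\mathbf{T}^{-1}(\Gamma_k)$, and check that $\underline{u}=0$ is a transverse $\mathcal{C}$-subsolution via the limit criterion \eqref{conbd}; openness follows from the implicit function theorem after rewriting the linearized operator, via a transverse Gauduchon conformal change and \eqref{bst}, as one whose formal adjoint has no zeroth-order term, hence is invertible modulo constants, and closedness is Theorem \ref{mainthm} plus bootstrapping. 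Assertion (2) is identical with $\mathbf{T}$ removed, i.e.\ $f=\log\sigma_k(\lambda)$ on $\Gamma_k$; the Chern--Ricci-flat characterization for $k=n$ follows by choosing $G$ from the basic $\partial_{\mathrm{B}}\bar\partial_{\mathrm{B}}$-lemma so that $\ric(\omega_\dag)$ represents $c_1^{\mathrm{BC,b}}(Q)=0$ and using \eqref{ricciforme}. For assertion (3) I would follow Corollary \ref{cor2}: $f(\lambda)=-\binom{n}{\ell}^{-1}\sigma_\ell(\lambda)/\binom{n}{k}^{-1}\sigma_k(\lambda)$ is concave with $f_i>0$ on $\Gamma_k$; solve the family \eqref{thessquotientt} with $\chi_{\mathcal{F}}$ in place of $\eta$, noting that integrating against $(\omega_h+\sqrt{-1}\partial_{\mathrm{B}}\bar\partial_{\mathrm{B}}u_t)^k\wedge\omega_\dag^{n-k}\wedge\chi_{\mathcal{F}}$ and using $\mathrm{d}_{\mathrm{B}}\omega_h=\mathrm{d}_{\mathrm{B}}\omega_\dag=0$ with \eqref{bst} gives $c_t\ge tc$, and that the condition that $\underline{u}=0$ be a transverse $\mathcal{C}$-subsolution is, by the computation leading to \eqref{tiaojian1}--\eqref{tiaojian2}, equivalent to the strict transverse positivity \eqref{foliationhessquotientcon}; Theorem \ref{mainthm} then gives uniform estimates on $[\epsilon_0,1]$. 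Uniqueness in (1) and (2) is the strong maximum principle argument at the end of Corollary \ref{corjia1}.

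The main obstacle is the $L^\infty$ estimate: unlike the Sasakian case there is no Green's function for a natural Laplacian adapted to a general taut foliation, so the global $L^1$ bound on $u$ must be obtained by purely local means --- the weak Harnack inequality in foliated charts together with the basic Stokes' theorem --- before the ABP argument applies. A secondary point, routine given Section \ref{secfoliation} but worth checking, is that every integration-by-parts identity (self-adjointness of the linearized operators, the $c_t\ge tc$ estimate, invertibility-modulo-constants of $(D\Phi)_0$) remains valid when $\mathrm{d}\chi_{\mathcal{F}}=\varphi_0\neq0$; this is exactly what \eqref{bst} guarantees, since each such identity only integrates a $\mathrm{d}_{\mathrm{B}}$-exact basic top-degree form against $\chi_{\mathcal{F}}$.
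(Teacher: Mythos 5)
Your proposal is correct and follows essentially the same route as the paper: the zero-order bound via the weak Harnack inequality in foliated charts plus the Alexandroff--Bakelman--Pucci argument of \cite[Proposition 11]{gaborjdg} (precisely because the Green's-function argument of Theorem \ref{thm0order} is unavailable here), the second- and first-order bounds by the purely local computations of Theorems \ref{thm2order} and \ref{thm1order} in the complex variables of a foliated chart, and the applications by the continuity method of Section \ref{secapp} with $\eta$ replaced by $\chi_{\mathcal{F}}$ and all integrations by parts justified by the basic Stokes' theorem \eqref{bst}. You correctly isolate the one genuine modification (the $L^\infty$ estimate), which is exactly the point the paper singles out as well.
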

The conclusion for Theorem \ref{mainthm} is a transverse version of \cite{gaborjdg}. Equation \eqref{foliationtwhesstcma}, \eqref{foliationhesscma} and \eqref{foliationhessquotient} are the transverse version of \cite{twcrelle}, \cite{sun2017,zhangpjm} and \cite{gaborjdg} respectively.
\begin{proof}[Proof of Theorem \ref{thmfoliation2}]
The $L^\infty$ estimate follows from the weak Harnack inequality \cite[Theorem 9.22]{gt1998} and the modified Alexandroff-Bakelman-Pucci maximum principle (see \cite[Proposition 11]{gaborjdg}).

For the $C^2$ estimate, using maximum principle, the perturbation argument in the proof of Theorem \ref{thm2order}, and the foliated local coordinate patch $(U;x_1,\cdots,x_r,z_1,\cdots,z_n)$, we work with the qualities of $(z_1,\cdots,z_n).$ Hence the $C^2$ estimate follows from the argument in the proof of \cite[Theorem 1.2]{stw1503} with $Z(u)\equiv0$.

All other arguments are the same as the ones in Sasakian case by replacing $\eta$ with $\chi_{\mathcal{F}}$ wherever it occurs.
\end{proof}

\end{document}